\DeclareMathOperator{\sgn}{\mathrm{sgn}}
\begin{document}
 \bibliographystyle{plain}

 \newtheorem{theorem}{Theorem}
 \newtheorem{lemma}[theorem]{Lemma}
 \newtheorem{proposition}[theorem]{Proposition}
 \newtheorem{corollary}[theorem]{Corollary}
 \theoremstyle{definition}
 \newtheorem{definition}[theorem]{Definition}
 \newtheorem{example}[theorem]{Example}
 \theoremstyle{remark}
 \newtheorem{remark}[theorem]{Remark}
 \newcommand{\mc}{\mathcal}
 \newcommand{\A}{\mc{A}}
 \newcommand{\B}{\mc{B}}
 \newcommand{\cc}{\mc{C}}
 \newcommand{\D}{\mc{D}}
 \newcommand{\E}{\mc{E}}
 \newcommand{\F}{\mc{F}}
 \newcommand{\G}{\mc{G}}
 \newcommand{\sH}{\mc{H}}
 \newcommand{\I}{\mc{I}}
 \newcommand{\J}{\mc{J}}
 \newcommand{\K}{\mc{K}}
 \newcommand{\lL}{\mc{L}}
 \newcommand{\M}{\mc{M}}
 \newcommand{\nn}{\mc{N}}
 \newcommand{\rr}{\mc{R}}
 \newcommand{\sS}{\mc{S}}
 \newcommand{\U}{\mc{U}}
 \newcommand{\X}{\mc{X}}
 \newcommand{\Y}{\mc{Y}}
 \newcommand{\C}{\mathbb{C}}
 \newcommand{\R}{\mathbb{R}}
 \newcommand{\N}{\mathbb{N}}
 \newcommand{\Q}{\mathbb{Q}}
 \newcommand{\Z}{\mathbb{Z}}
 \newcommand{\csch}{\mathrm{csch}}
 \newcommand{\tF}{\widehat{F}}
 \newcommand{\tG}{\widehat{G}}
 \newcommand{\tH}{\widehat{H}}
 \newcommand{\tf}{\widehat{f}}
 \newcommand{\ug}{\widehat{g}}
 \newcommand{\wg}{\widetilde{g}}
 \newcommand{\uh}{\widehat{h}}
 \newcommand{\wh}{\widetilde{h}}
 \newcommand{\wl}{\widetilde{l}}
 \newcommand{\tk}{\widehat{k}}
 \newcommand{\tK}{\widehat{K}}
 \newcommand{\tl}{\widehat{l}}
 \newcommand{\tL}{\widehat{L}}
 \newcommand{\tm}{\widehat{m}}
 \newcommand{\tM}{\widehat{M}}
 \newcommand{\tp}{\widehat{\varphi}}
 \newcommand{\tq}{\widehat{q}}
 \newcommand{\tT}{\widehat{T}}
 \newcommand{\tU}{\widehat{U}}
 \newcommand{\tu}{\widehat{u}}
 \newcommand{\tV}{\widehat{V}}
 \newcommand{\tv}{\widehat{v}}
 \newcommand{\tW}{\widehat{W}}
 \newcommand{\ba}{\boldsymbol{a}}
 \newcommand{\bal}{\boldsymbol{\alpha}}
 \newcommand{\bx}{\boldsymbol{x}}
 \newcommand{\p}{\varphi}
 \newcommand{\f}{\frac52}
 \newcommand{\g}{\frac32}
 \newcommand{\h}{\frac12}
 \newcommand{\hh}{\tfrac12}
 \newcommand{\ds}{\text{\rm d}s}
 \newcommand{\dt}{\text{\rm d}t}
 \newcommand{\du}{\text{\rm d}u}
 \newcommand{\dv}{\text{\rm d}v}
 \newcommand{\dw}{\text{\rm d}w}
 \newcommand{\dx}{\text{\rm d}x}
 \newcommand{\dy}{\text{\rm d}y}
 \newcommand{\dl}{\text{\rm d}\lambda}
 \newcommand{\dmu}{\text{\rm d}\mu(\lambda)}
 \newcommand{\dnu}{\text{\rm d}\nu(\lambda)}
\newcommand{\dnus}{\text{\rm d}\nu_{\sigma}(\lambda)}
 \newcommand{\dlnu}{\text{\rm d}\nu_l(\lambda)}
 \newcommand{\dnnu}{\text{\rm d}\nu_n(\lambda)}
\newcommand{\sech}{\text{\rm sech}}
 \def\today{\number\time, \ifcase\month\or
  January\or February\or March\or April\or May\or June\or
  July\or August\or September\or October\or November\or December\fi
  \space\number\day, \number\year}

\title[The Beurling-Selberg extremal problem]{Gaussian Subordination for the Beurling-Selberg Extremal Problem}
\author[Carneiro, Littmann and Vaaler]{Emanuel Carneiro, Friedrich Littmann and Jeffrey D. Vaaler}

\date{\today}
\subjclass[2000]{Primary 41A30, 41A52. Secondary 41A05, 41A44, 42A82}
\keywords{Gaussian, exponential type, extremal functions, majorization, tempered distributions.}
\address{School of Mathematics, Institute for Advanced Study, Princeton, NJ 08540.}
\email{ecarneiro@math.ias.edu}
\address{Department of mathematics, North Dakota State University, Fargo, ND 58105-5075.}
\email{friedrich.littmann@ndsu.edu}
\address{Department of Mathematics, University of Texas at Austin, Austin, TX 78712-1082.}
\email{vaaler@math.utexas.edu}

\begin{abstract} We determine extremal entire functions for the problem of majorizing, minorizing, and approximating the Gaussian function $e^{-\pi\lambda x^2}$ by entire functions of exponential type. The combination of the Gaussian and a general distribution approach provides the solution of the extremal problem for a wide class of even functions that includes most of the previously known examples (for instance \cite{CV2}, \cite{CV3}, \cite{GV} and \cite{Lit}), plus a variety of new interesting functions such as $|x|^{\alpha}$ for $-1 < \alpha$; \,$\log \,\bigl((x^2 + \alpha^2)/(x^2 + \beta^2)\bigr)$, for $0 \leq \alpha < \beta$;\, $\log\bigl(x^2 + \alpha^2\bigr)$; and $x^{2n} \log x^2$\,, for $n \in \N$. Further applications to number theory include optimal approximations of theta functions by trigonometric polynomials and optimal bounds for certain Hilbert-type inequalities related to the discrete Hardy-Littlewood-Sobolev inequality in dimension one.
\end{abstract}

\maketitle

\numberwithin{equation}{section}

\section*{Introduction}

We recall that an entire function $F:\C \to \C$ is of {\it exponential type} at most $2\pi \delta$ if for every $\epsilon >0$ there exists a
positive constant $C$, such that the inequality 
\begin{equation*}\label{intro0}
|F(z)| \leq C e^{(2\pi \delta + \epsilon) |z|}
\end{equation*}
holds for all $z \in \C$.  For a given function $f: \R \to \R$, the Beurling-Selberg extremal problem consists of finding an entire function $F(z)$ of exponential type at most $2\pi \delta$, such that the integral 
\begin{equation}\label{BS1}
 \int_{-\infty}^{\infty} |F(x) - f(x)|\, \dx
\end{equation}
is minimized.  An important variant of this problem, useful in many applications to number theory and analysis, occurs when we impose the additional condition that $F(z)$ is real valued on $\R$ and satisfies $F(x) \geq f(x)$ for all $x \in \R$.  In this case a function $F(z)$ that minimizes the integral (\ref{BS1}) is 
called an extreme majorant of $f(x)$.  Extreme minorants are defined analogously.

This extremal problem was introduced in the work of A.~Beurling in the late 1930's for the function $f(x) = \sgn(x)$.  Later A.~Selberg recognized how to majorize and minorize the characteristic function of an interval using Beurling's extremal function, and made use of this construction to obtain a sharp form of the large sieve inequality.  Further applications in analytic number theory are discussed in \cite{S1} and \cite{S2}.  An outline of the early development of this theory, including simple proofs of the Erd\"{o}s-Tur\'{a}n inequality and the Montgomery-Vaughan inequality (see \cite{MV}) is presented in \cite{V}.

General solutions to the Beurling-Selberg extremal problem for different classes of functions have been identified in several recent papers, and these have included new number theoretical applications.  A key ingredient in most applications is the close connection between entire functions of exponential type and functions with compactly supported Fourier transform via the Paley-Wiener theorem.  The extremal problem for the exponential function $f(x) = e^{-\lambda |x|}$, $\lambda>0$, is discussed by Graham and Vaaler in \cite{GV}, with applications to Tauberian theorems. The problem for $f(x) = x^n \sgn(x)$ and $f(x) = (x^+)^n$, where $n$ is a positive integer, is considered by Littmann in \cite{Lit0}, \cite{Lit} and \cite {Lit2}.  In \cite{CV2} and \cite{CV3}, Carneiro and Vaaler extended the construction of extremal approximations for a class of 
even functions that includes $f(x) = \log|x|$,  $f(x) = \log\bigl(x^2/(x^2 + 4)\bigr)$ and $f(x) = |x|^{\alpha}$, with $-1< \alpha < 1$. 

Recently, Chandee and Soundararajan in \cite{CS} used the extremal functions for $f(x) = \log\bigl(x^2/(x^2 + 4)\bigr)$ to obtain improved upper bounds for 
$|\zeta(\tfrac{1}{2} + it)|$ assuming the Riemann Hypothesis (RH). They remarked that the extremals for the function $f(x) = \log\bigl((x^2 + \alpha^2)/(x^2 + 4)\bigr)$, for $\alpha \neq 0$, not contemplated in the previous literature, naturally arise in bounding $|\zeta(\tfrac{1}{2} \pm \alpha + it)|$, assuming RH. We will return to this example later in this paper, since the family of functions $f(x) = \log \,\bigl((x^2 + \alpha^2)/(x^2 + \beta^2)\bigr)$, for $0 \leq \alpha < \beta$, is contemplated by the methods we are about to present.

Other problems on approximation by entire functions and trigonometric polynomials have been investigated by Carneiro \cite{Car}, Ganelius \cite{ganelius}, Ganzburg and Lubinsky \cite{GL}, Graham and Vaaler \cite{GV2}, Montgomery \cite{M} and Vaaler \cite{V2}.  Related extremal problems in several variables, a hard ramification of the theory, are considered by Barton, Montgomery and Vaaler \cite{BMV}, Holt and Vaaler \cite{HV} and Li and Vaaler\cite{LV}.

This paper is divided in three parts.  In the first part we consider the problem of majorizing, minorizing, and approximating the Gaussian function 
\begin{equation}\label{intro1}
x\mapsto G_{\lambda}(x) = e^{-\pi\lambda x^2}
\end{equation}
on $\R$ by entire functions of exponential type.  Here $\lambda > 0$ is a parameter.  We make use of classical interpolation techniques and integral representations to achieve this goal.

The second part is independent of the first and presents a new approach to the Beurling-Selberg extremal problem based on distribution theory. The main ingredient here is the Paley-Wiener theorem for distributions.  Our method provides the solution of the extremal problem for a wide class of even functions once one knows the classical solution for a family of even functions with an independent parameter.  In the present work this family of even
function is given by (\ref{intro1}) for $\lambda > 0$.  

Various applications are presented in the third part of the paper.
Most of the previously known cases become corollaries of this method (for instance, the results in \cite{CV2}, \cite{CV3}, \cite{GV} and \cite{Lit}), and we obtain the solution to the extremal problems for new interesting functions such as $|x|^{\alpha}$ for $-1 < \alpha$; \,$\log \,\bigl((x^2 + \alpha^2)/(x^2 + \beta^2)\bigr)$, for 
$0 \leq \alpha < \beta$;\, $\log\bigl(x^2 + \alpha^2\bigr)$; and $|x|^{2n} \log|x|$\,, for $n \in \N$. Some of the extremal $L^1(\R)$-approximations (without the one-sided condition) have previously been obtained by Sz.- Nagy (cf.\ \cite[Chapter 7]{Shapiro}). Further applications to number theory include optimal approximations of theta functions by trigonometric polynomials and optimal bounds for certain Hilbert-type inequalities related to the discrete 
Hardy-Littlewood-Sobolev inequality in dimension one.

\section*{Part I: The Gaussian}
\section{The Extremal Problem for the Gaussian}
For each positive value of $\lambda$ we define three entire functions as follows:
\begin{align}
K_{\lambda}(z) &= \Bigl(\frac{\cos \pi z}{\pi}\Bigr)
  \bigg\{\sum_{n=-\infty}^{\infty} (-1)^{n+1} \frac{G_{\lambda}\bigl(n+\hh\bigr)}{\bigl(z-n-\h\bigr)}\bigg\}\label{intro2},\\
L_{\lambda}(z) &= \Bigl(\frac{\cos \pi z}{\pi}\Bigr)^2 
	\bigg\{\sum_{m=-\infty}^{\infty} \frac{G_{\lambda}\bigl(m+\hh\bigr)}{\bigl(z-m-\h\bigr)^2} 
	+  \sum_{n=-\infty}^{\infty} \frac{G_{\lambda}^{\prime}\bigl(n+\h\bigr)}{\bigl(z-n-\h\bigr)}\bigg\}\label{intro3},\\
M_{\lambda}(z) &= \Bigl(\frac{\sin \pi z}{\pi}\Bigr)^2 
	\bigg\{\sum_{m=-\infty}^{\infty} \frac{G_{\lambda}(m)}{(z-m)^2} 
		+  \sum_{n=-\infty}^{\infty} \frac{G_{\lambda}^{\prime}(n)}{(z-n)}\bigg\}.\label{intro4}
\end{align}

The function $K_{\lambda}(z)$ is an entire function of exponential type $\pi$ which interpolates the values of 
the function $G_{\lambda}(z)$ at points of the coset $\Z + \hh$.  We will show that among all 
entire functions of exponential type at most $\pi$, the function $K_{\lambda}(z)$ provides the best approximation to 
$G_{\lambda}(z)$ with respect to the $L^1$-norm on $\R$.
  
The function $L_{\lambda}(z)$ is a real entire function of exponential type $2\pi$ which interpolates both the
values of $G_{\lambda}(z)$ and the values of its derivative $G_{\lambda}^{\prime}(z)$ on the coset
$\Z + \hh$.  Similarly, the function $M_{\lambda}(z)$ is a real entire function of exponential type $2\pi$ which
interpolates both the values of $G_{\lambda}(z)$ and the values of its derivative $G_{\lambda}^{\prime}(z)$ on the
integers $\Z$.   By a {\it real} entire function we understand an entire function whose restriction to $\R$ is real valued.
We will show that these functions satisfy the basic inequality
\begin{equation}\label{intro5}
L_{\lambda}(x) \le G_{\lambda}(x) \le M_{\lambda}(x)
\end{equation}
for all real $x$.  Moreover, we will show that the value of each of the two integrals 
\begin{equation*}\label{intro6}
\int_{-\infty}^{\infty} \Big\{G_{\lambda}(x) - L_{\lambda}(x)\Big\}\ \dx\quad\text{and}
 	\quad \int_{-\infty}^{\infty} \Big\{M_{\lambda}(x) - G_{\lambda}(x)\Big\}\ \dx,
\end{equation*}
is minimized.

In order to state a more precise form of our main results for the Gaussian function, we make use of the basic theta functions.
Here $v$ is a complex variable, $\tau$ is a complex variable with $\Im\{\tau\} > 0$, $q = e^{\pi i\tau}$,
and $e(z) = e^{2\pi iz}$.  Our notation for the theta functions follows that of Chandrasekharan \cite{KC}.
Thus we define
\begin{align}
\theta_1(v, \tau) &= \sum_{n= -\infty}^{\infty} q^{(n+\h)^2} e\bigl((n+\hh)v\bigr),\label{theta1}\\
\theta_2(v, \tau) &= \sum_{n= -\infty}^{\infty} (-1)^n q^{n^2} e(nv),\label{theta2}\\
\theta_3(v, \tau) &= \sum_{n= -\infty}^{\infty} q^{n^2} e(nv)\label{theta3}.
\end{align}
We note that for a fixed value of $\tau$ with 
$\Im\{\tau\} > 0$, each of the functions $v\mapsto \theta_1(v,\tau)$,
$v\mapsto \theta_2(v,\tau)$, and $v\mapsto \theta_3(v,\tau)$ is an {\it even} entire function of 
$v$.  The function $v\mapsto \theta_1(v, \tau)$ is periodic with period $2$, and satisfies the identity
\begin{equation}\label{theta4}
\theta_1(v + 1, \tau) = - \theta_1(v, \tau)
\end{equation}
for all complex $v$.  Both of the functions $v\mapsto \theta_2(v,\tau)$, and 
$v\mapsto \theta_3(v,\tau)$, are periodic with period $1$.  They are related by the identity
\begin{equation}\label{theta5}
\theta_2(v + \hh,\tau) = \theta_3(v,\tau).
\end{equation}
The transformation formulas for the theta functions (see \cite[Chapter V, Theorem 9, Corollary 1]{KC}) provide 
a connection with the Gaussian function $G_{\lambda}(z)$.  In particular we have
\begin{align}
\sum_{n=-\infty}^{\infty} (-1)^n G_{\lambda}(n - v) &= \lambda^{-\h} \theta_1\bigl(v, i\lambda^{-1}\bigr),\label{poisson1}\\
\sum_{n=-\infty}^{\infty} G_{\lambda}(n + \hh - v) &= \lambda^{-\h} \theta_2\bigl(v, i\lambda^{-1}\bigr),\label{poisson2}\\
\sum_{n=-\infty}^{\infty} G_{\lambda}(n - v) &= \lambda^{-\h} \theta_3\bigl(v, i\lambda^{-1}\bigr).\label{poisson3}
\end{align}

Our first main result identifies the entire function $K_{\lambda}(z)$ as the unique best approximation to $G_{\lambda}(z)$ 
on $\R$ among all entire functions of exponential type at most $\pi$.

\begin{theorem}\label{thm1}
Let $F(z)$ be an entire function of exponential type at most $\pi$.  Then
\begin{equation}\label{intro10}
\lambda^{-\h} \int_{-\h}^{\h} \theta_1\bigl(u,i\lambda^{-1}\bigr)\ \du 
	\le \int_{-\infty}^{\infty} \bigl|G_{\lambda}(x) - F(x)\bigr|\ \dx, 
\end{equation}
and there is equality in {\rm (\ref{intro10})} if and only if $F(z) = K_{\lambda}(z)$.
\end{theorem}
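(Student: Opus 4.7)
The plan is to establish (\ref{intro10}) by a duality argument against $\sgn(\cos \pi x)$ and to verify that $K_\lambda$ attains equality. If the right-hand side of (\ref{intro10}) is infinite the inequality is vacuous; otherwise $F\in L^1(\R)$ (since $F = G_\lambda - (G_\lambda - F)$ and $G_\lambda \in L^1(\R)$). By Paley--Wiener $\widehat{F}$ is continuous on $\R$ with support in $[-\hh,\hh]$, so continuity forces $\widehat{F}(\pm\hh)=0$. The $2$-periodic function $\sgn(\cos \pi x)$ has Fourier spectrum at the odd half-integers $\pm(2k+1)/2$, $k\ge 0$; pairing against $\widehat{F}$ (whose support only meets this set at $\pm\hh$) gives $\int_\R F(x)\sgn(\cos \pi x)\,\dx = \tfrac{2}{\pi}\bigl(\widehat{F}(\hh)+\widehat{F}(-\hh)\bigr) = 0$. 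The triangle inequality then yields the $F$-independent bound $\int_\R |G_\lambda - F|\,\dx \geq \int_\R G_\lambda(x)\sgn(\cos \pi x)\,\dx$, and partitioning $\R$ into the intervals $(n-\hh, n+\hh)$, applying Fubini (Gaussian absolute convergence), and invoking the Poisson-type identity (\ref{poisson1}) together with the evenness of $\theta_1$ in its first argument, the right-hand side equals $\lambda^{-\hh}\int_{-1/2}^{1/2}\theta_1(u, i\lambda^{-1})\,\du$, exactly the left side of (\ref{intro10}).

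For equality at $F=K_\lambda$ two ingredients are needed: (a) $K_\lambda \in L^1(\R)$, so that $\int_\R K_\lambda(x)\sgn(\cos \pi x)\,\dx = 0$ by the same argument, and (b) the pointwise sign condition
\begin{equation*}
\sgn\bigl(G_\lambda(x) - K_\lambda(x)\bigr) = \sgn(\cos \pi x) \quad \text{for a.e.\ }x\in\R.\qquad(\star)
\end{equation*}
Granting (a) and $(\star)$, the triangle inequality is sharp at $K_\lambda$. For uniqueness, any admissible $F$ attaining equality must saturate the triangle inequality, so $(\star)$ holds with $K_\lambda$ replaced by $F$, forcing $F(n+\hh)=G_\lambda(n+\hh)$ for every $n\in\Z$; the sampling theorem (an $L^1$ entire function of exponential type at most $\pi$ is uniquely determined by its values on $\Z+\hh$) then gives $F=K_\lambda$.

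The main obstacle is $(\star)$. My approach is to analyze the entire function $h(z) := (G_\lambda(z) - K_\lambda(z))/\cos(\pi z)$, well-defined because $K_\lambda$ interpolates $G_\lambda$ on $\Z+\hh$ by (\ref{intro2}), and to show $h(x)\ge 0$ on $\R$. A Fourier-side computation using the sampling form of $K_\lambda$ and (\ref{poisson1}) gives $\widehat{K_\lambda}(\xi) = \chi_{[-1/2,1/2]}(\xi)\,\theta_1(\xi, i\lambda)$, so on $[-\hh,\hh]$,
\begin{equation*}
\widehat{G_\lambda - K_\lambda}(\xi) = \widehat{G_\lambda}(\xi) - \theta_1(\xi, i\lambda) = \sum_{k\ge 1}(-1)^{k+1}\bigl[\widehat{G_\lambda}(\xi+k)+\widehat{G_\lambda}(\xi-k)\bigr],
\end{equation*}
which is strictly positive by Leibniz's alternating-series test (the bracketed Gaussian sum is monotone decreasing in $k$ for $\xi\in[-\hh,\hh]$), while $\widehat{G_\lambda-K_\lambda}(\xi)=\widehat{G_\lambda}(\xi)>0$ for $|\xi|>\hh$. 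Hence $G_\lambda - K_\lambda$ is a strictly positive-definite continuous function vanishing on $\Z+\hh$. To upgrade this positive-definiteness to the sharper $(\star)$, I would supplement it with an explicit interpolation-error representation of $h$ (for instance, a Cauchy-type contour integral exhibiting $h$ as an integral of manifestly nonnegative quantities), together with the concrete sign check $G_\lambda(0) > K_\lambda(0)$, to rule out additional real zeros of $G_\lambda - K_\lambda$ on $\R$ beyond the simple zeros at $\Z+\hh$.
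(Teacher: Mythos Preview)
Your overall architecture matches the paper's proof: bound from below by testing against $\sgn(\cos\pi x)$, verify equality at $K_\lambda$ via the sign condition $(\star)$, and deduce uniqueness from interpolation on $\Z+\hh$. Your computation of $\int_\R G_\lambda(x)\sgn(\cos\pi x)\,\dx$ by partitioning into $(n-\hh,n+\hh)$ and invoking (\ref{poisson1}) is a clean alternative to the paper's route through the Fourier series (\ref{pf4}). One small omission: after the triangle inequality you silently drop an absolute value, which requires knowing $\theta_1(u,i\lambda^{-1})>0$ on $(-\hh,\hh)$; the paper extracts this from the product formula (\ref{theta6}).

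The real gap is at $(\star)$, which you correctly flag as the crux but do not establish. Your positive-definiteness computation---that $\widehat{G_\lambda-K_\lambda}(\xi)>0$ everywhere, via $\widehat{K_\lambda}=\chi_{[-\hh,\hh]}\theta_1(\cdot,i\lambda)$ and a Leibniz estimate---is correct (it reproduces (\ref{intro32}) together with an instance of (\ref{poisson1})), but it cannot deliver $(\star)$: a strictly positive Fourier transform gives only $|f(x)|\le f(0)$ and positive-semidefiniteness of the Gram matrices $(f(x_i-x_j))$; it places no constraint on the sign of $f(x)$ for $x\neq 0$ and does not rule out additional real zeros. Your fallback of an unspecified ``Cauchy-type contour integral'' for $h=(G_\lambda-K_\lambda)/\cos\pi z$ is precisely where the substantive work lies. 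The paper carries this out in Lemma~\ref{lem1} and Lemma~\ref{lem4}: starting from the algebraic identity $G(z)G(w)e^{2\pi zw}=G(z-w)$, Lemma~\ref{lem1} produces a double-integral representation for the divided difference $(G_\lambda(z)-G_\lambda(w))/(z-w)$; summing this over $w\in\Z+\hh$ against the partial-fraction expansion of $\pi/\cos\pi z$ and collapsing the inner integral via the periodicity (\ref{theta4}) yields
\[
\frac{\pi}{\cos\pi z}\bigl\{G_\lambda(z)-K_\lambda(z)\bigr\}
=\pi\lambda\int_{-\infty}^{\infty}\frac{G_\lambda(z-t)}{\cosh\pi\lambda t}\int_{-\hh}^{\hh}\cosh(2\pi\lambda tu)\,\theta_1\bigl(u,i\lambda^{-1}\bigr)\,\du\,\dt,
\]
which is manifestly positive for real $z$. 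Without an argument of this type, $(\star)$---and with it the equality case and the uniqueness conclusion---remains unproven.
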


Next we consider the problem of minorizing $G_{\lambda}(z)$ on $\R$ by a real entire function of exponential type 
at most $2\pi$.  

\begin{theorem}\label{thm2}
Let $F(z)$ be a real entire function of exponential type at most $2\pi$ such that
\begin{equation*}\label{intro20}
F(x) \le G_{\lambda}(x)
\end{equation*}
for all real $x$.  Then
\begin{equation}\label{intro21}
\int_{-\infty}^{\infty} F(x)\ \dx \le \lambda^{-\h} \theta_2\bigl(0, i\lambda^{-1}\bigr),
\end{equation}
and there is equality in {\rm (\ref{intro21})} if and only if $F(z) = L_{\lambda}(z)$.
\end{theorem}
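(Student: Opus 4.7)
My plan is to combine a Gauss-type quadrature formula on the shifted lattice $\Z + \hh$ with the pointwise minorization hypothesis. The basic tool is the identity
\begin{equation*}
\int_{-\infty}^{\infty} F(x)\, \dx = \sum_{n \in \Z} F\bigl(n + \hh\bigr),
\end{equation*}
which I would establish for any entire $F$ of exponential type at most $2\pi$ with $F|_{\R} \in L^1(\R)$. This follows from Poisson summation on the shifted lattice: the right-hand side equals $\sum_{k \in \Z} (-1)^k \widehat{F}(k)$, and by Paley--Wiener the transform $\widehat{F}$ is continuous and supported in $[-1, 1]$, so it vanishes at $\pm 1$ and only the $k = 0$ term survives. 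To bring $F$ into this setting, I first note that if $\int F = -\infty$ the inequality is trivial, while otherwise the non-negative function $G_\lambda - F$ is integrable (since $G_\lambda \in L^1$), so $F = G_\lambda - (G_\lambda - F) \in L^1(\R)$.

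With the quadrature formula in hand, the pointwise hypothesis $F(n + \hh) \le G_\lambda(n + \hh)$ and the theta identity (\ref{poisson2}) at $v = 0$ give at once
\begin{equation*}
\int_{-\infty}^{\infty} F(x)\, \dx = \sum_n F\bigl(n + \hh\bigr) \le \sum_n G_\lambda\bigl(n + \hh\bigr) = \lambda^{-\h}\, \theta_2\bigl(0, i\lambda^{-1}\bigr),
\end{equation*}
which is (\ref{intro21}). Sharpness then amounts to showing $L_\lambda$ itself is admissible: it interpolates $G_\lambda$ at $\Z + \hh$ by the definition (\ref{intro3}), and once one knows $L_\lambda \in L^1(\R)$ the quadrature formula yields the matching integral $\lambda^{-\h}\theta_2(0, i\lambda^{-1})$. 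The essential new content is the pointwise inequality $L_\lambda(x) \le G_\lambda(x)$ on $\R$, which I expect to be the main obstacle: one wants to exhibit $G_\lambda - L_\lambda$ as $\cos^2(\pi x)$ times a non-negative kernel, and this is most naturally obtained by setting up an integral representation of $G_\lambda$ (``Gauss subordination'') that encodes the positivity through integration against a positive measure in an auxiliary variable.

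For uniqueness, suppose equality holds in (\ref{intro21}). Then $F(n + \hh) = G_\lambda(n + \hh)$ for every $n \in \Z$, and since $G_\lambda - F \ge 0$ attains its minimum $0$ at each such point, one also has $F'(n + \hh) = G_\lambda'(n + \hh) = L_\lambda'(n + \hh)$. Hence $F - L_\lambda$ is entire of exponential type at most $2\pi$, lies in $L^1(\R)$, and vanishes to order at least two at each point of $\Z + \hh$, so the ratio
\begin{equation*}
H(z) = \frac{F(z) - L_\lambda(z)}{\cos^2(\pi z)}
\end{equation*}
is entire of exponential type $0$. Bernstein's inequality controls $(F - L_\lambda)''$ on $\R$, which forces $H$ to remain bounded across the nodes $\Z + \hh$ where the double zeros cancel; a Phragm\'en--Lindel\"of argument then propagates boundedness to all of $\C$, so $H$ is constant, and the integrability of $F - L_\lambda$ rules out any non-zero constant, giving $F \equiv L_\lambda$.
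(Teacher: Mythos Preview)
Your approach is essentially the paper's: bound $\int F$ by comparing $F$ to $G_\lambda$ on the lattice $\Z+\tfrac12$ via a summation formula, invoke the inequality $L_\lambda \le G_\lambda$ (which the paper proves as Corollary~\ref{cor9} from the integral representation of Lemma~\ref{lem5}) for sharpness, and use the forced double zeros of $F-L_\lambda$ at $\Z+\tfrac12$ for uniqueness.

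The one point to flag is your quadrature identity $\int F=\sum_n F\bigl(n+\tfrac12\bigr)$. For $F\in L^1(\R)$ of exponential type exactly $2\pi$, the pointwise convergence of $\sum_n F\bigl(n+\tfrac12\bigr)$ is not automatic: Poisson summation only tells you that the periodization equals $\widehat F(0)$ almost everywhere, and at the critical sampling rate there is no general theorem forcing convergence of the sample sum at a prescribed shift. The paper sidesteps this by quoting \cite[Lemma~4]{GV}, which supplies the Ces\`aro-summed version
\[
\int_{-\infty}^\infty F(x)\,\dx=\lim_{N\to\infty}\sum_{n=-N}^N\Bigl(1-\frac{|n|}{N+1}\Bigr)F(n+v)
\]
valid for every real $v$; this suffices both for (\ref{intro21}) and, applied a second time to $F-L_\lambda$, for uniqueness. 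Your argument is easily patched the same way, or alternatively by noting that non-negativity of $G_\lambda-F$ upgrades Ces\`aro summability of $\sum_n(G_\lambda-F)\bigl(n+\tfrac12\bigr)$ to ordinary summability.
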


Here is the analogous result for the problem of majorizing $G_{\lambda}(z)$ on $\R$ by a real entire function of exponential type 
at most $2\pi$.  

\begin{theorem}\label{thm3} 
Let $F(z)$ be a real entire function of exponential type at most $2\pi$ such that
\begin{equation*}\label{intro22}
G_{\lambda}(x) \le F(x)
\end{equation*}
for all real $x$.  Then
\begin{equation}\label{intro23}
\lambda^{-\h} \theta_3\bigl(0, i\lambda^{-1}\bigr) \le \int_{-\infty}^{\infty} F(x)\ \dx,
\end{equation}
and there is equality in {\rm (\ref{intro23})} if and only if $F(z) = M_{\lambda}(z)$.
\end{theorem}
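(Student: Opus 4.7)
\noindent\textbf{Proof plan for Theorem~\ref{thm3}.} The argument follows the template of Theorem~\ref{thm2}, with the coset $\Z+\h$ replaced by the integer lattice $\Z$ at which $M_\lambda$ interpolates both $G_\lambda$ and its derivative.

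First I would show that $M_\lambda$ is a genuine majorant (the right-hand inequality in (\ref{intro5})). The defining series (\ref{intro4}), together with the interpolation relations $M_\lambda(n)=G_\lambda(n)$ and $M_\lambda'(n)=G_\lambda'(n)$, forces $M_\lambda - G_\lambda$ to have a zero of order at least two at every integer; an integral representation for this difference (analogous to the ones used in the earlier literature on extremal problems for the exponential) exhibits its nonnegativity on~$\R$. Integrating (\ref{intro4}) term-by-term against the kernel $(\sin\pi z/\pi)^2$ yields $\int_{\R}M_\lambda(x)\,\dx=\sum_{n\in\Z}G_\lambda(n)$, and (\ref{poisson3}) with $v=0$ identifies this sum as $\lambda^{-\h}\theta_3(0,i\lambda^{-1})$. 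Hence $M_\lambda$ already realizes equality in (\ref{intro23}).

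For the lower bound, let $F$ be a real entire function of exponential type at most $2\pi$ with $F(x)\ge G_\lambda(x)$ for every real $x$. One may assume $F\in L^1(\R)$, for otherwise (\ref{intro23}) is trivial, and then $F\ge G_\lambda>0$ is a nonnegative $L^1$ function of exponential type at most $2\pi$. By the Akhiezer--Krein factorization there exists an entire function $E$ of exponential type at most $\pi$, with $E\in L^2(\R)$, such that $F(z)=E(z)\,\overline{E(\bar z)}$. The Shannon--Whittaker sampling identity for band-limited $L^2$ functions then gives
\[
\int_{-\infty}^{\infty}F(x)\,\dx \;=\; \|E\|_2^2 \;=\; \sum_{n\in\Z}|E(n)|^2 \;=\; \sum_{n\in\Z}F(n).
\]
Combining this quadrature with $F(n)\ge G_\lambda(n)$ and applying (\ref{poisson3}) at $v=0$ produces exactly (\ref{intro23}).

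For uniqueness, suppose equality holds. Then $F(n)=G_\lambda(n)$ for every $n\in\Z$, and since $F-G_\lambda\ge 0$ attains its minimum $0$ at every such $n$, also $F'(n)=G_\lambda'(n)=M_\lambda'(n)$. The difference $h(z)=F(z)-M_\lambda(z)$ is entire of exponential type at most $2\pi$, lies in $L^1(\R)$, and has a zero of multiplicity at least two at every integer. Writing $h(z)=(\sin\pi z)^2\,\Phi(z)$ with $\Phi$ entire, the sharp identity $|\sin\pi z|^2=\sin^2\pi x+\sinh^2\pi y$ together with a Phragm\'en--Lindel\"of estimate shows that $\Phi$ has exponential type $0$ and is bounded on $\R$; hence $\Phi$ is a constant, and the $L^1$ constraint on $h$ forces this constant to be $0$. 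Therefore $F=M_\lambda$. The main obstacle is precisely this last step: the double-interpolation condition at integers looks weak, and turning it into the rigidity $F=M_\lambda$ requires coordinating complex-analytic growth bounds for $(\sin\pi z)^{-2}$ with the global $L^1$ control of $h$. Everything else reduces, via the Akhiezer--Krein quadrature and the transformation formulas (\ref{poisson1})--(\ref{poisson3}), to the same pointwise comparisons that underlie Theorem~\ref{thm2}.
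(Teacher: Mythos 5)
Your proposal is correct, but it replaces the paper's machinery with a genuinely different toolkit. The paper proves Theorem~\ref{thm3} ``by the same sort of argument'' as Theorem~\ref{thm2}: it applies the Ces\`aro/Fej\'er quadrature formula of \cite[Lemma 4]{GV}, valid for \emph{any} integrable entire $F$ of exponential type at most $2\pi$, giving $\int_{\R}F=\lim_N\sum_{|n|\le N}\bigl(1-\tfrac{|n|}{N+1}\bigr)F(n+v)$; one then bounds $F(n)\ge G_\lambda(n)$, applies \eqref{poisson3} at $v=0$, and reuses the same lemma to dispatch uniqueness from the double interpolation data $F(n)=G_\lambda(n)$, $F'(n)=G_\lambda'(n)$. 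You instead exploit the nonnegativity $F\ge G_\lambda>0$ to invoke the Akhiezer--Krein (Fej\'er--Riesz) factorization $F=E\,E^*$ with $E$ of type $\pi$ in $L^2$, so that the exact sampling identity $\int_{\R}F=\sum_{n}|E(n)|^2=\sum_{n}F(n)$ carries the weight; for uniqueness you factor $h=F-M_\lambda=(\sin\pi z)^2\Phi$ and kill $\Phi$ by a Liouville/Phragm\'en--Lindel\"of argument. Both routes are sound, and the computations of $\int_{\R}M_\lambda$ and the positivity of $M_\lambda-G_\lambda$ (the paper's Corollary~\ref{cor9}) are handled the same way. Two remarks worth noting: (a) your factorization step relies essentially on $F\ge 0$, so this route would not transfer to the minorant Theorem~\ref{thm2}, whereas the paper's quadrature lemma handles both cases uniformly; (b) in the uniqueness step you should make explicit that $h\in L^1(\R)$ of type $\le 2\pi$ satisfies $|h(x+iy)|\le\|h\|_1 e^{2\pi|y|}$ (via the Paley--Wiener representation), which together with $|\sin\pi z|^2\ge\sinh^2\pi y$ gives the global bound on $\Phi$ directly without a separate exponential-type reduction. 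With those points spelled out, the argument is complete and correct.
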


It follows from Theorem \ref{thm1} that for $\delta > 0$, the entire function $z \mapsto K_{\lambda\delta^{-2}}(\delta z)$ is the 
unique best $L^1$-approximation to $G_{\lambda}(x)$ by an entire function of exponential type $\pi\delta$.  In a similar manner, using 
Theorem \ref{thm2} and Theorem \ref{thm3}, one can check that the real entire
functions $z \mapsto L_{\lambda \delta^{-2}}(\delta z)$ and $z \mapsto M_{\lambda \delta^{-2}}(\delta z)$ are the 
unique extremal minorant and majorant, respectively, of exponential type $2\pi \delta$ for the function $G_{\lambda}(x)$.  

The entire function $K_{\lambda}(z)$ has exponential type $\pi$, and the restriction $x\mapsto K_{\lambda}(x)$ of this function 
to $\R$ is clearly integrable.  It follows that the Fourier transform
\begin{equation*}\label{intro30}
\tK_{\lambda}(t) = \int_{-\infty}^{\infty} K_{\lambda}(x) e(-xt)\ \dx
\end{equation*}
is a continuous function on $\R$, and is supported on the compact interval $[-\h, \h]$.  The entire functions $L_{\lambda}(z)$ 
and $M_{\lambda}(z)$ have exponential type $2\pi$, and the restrictions of these functions to $\R$ are both integrable.  Hence 
their Fourier transforms
\begin{equation*}\label{intro31}
\tL_{\lambda}(t) = \int_{-\infty}^{\infty} L_{\lambda}(x) e(-xt)\ \dx,
		\quad\text{and}\quad  \tM_{\lambda}(t) = \int_{-\infty}^{\infty} M_{\lambda}(x) e(-xt)\ \dx,
\end{equation*}
are both continuous, and both Fourier transforms are supported on the compact interval $[-1, 1]$.  These Fourier transforms can be 
given explicitly in terms of the theta functions.

\begin{theorem}\label{thm4}
If $-\h \le t \le \h$ then the Fourier transform $t\mapsto \tK_{\lambda}(t)$ is given by
\begin{equation}\label{intro32}
\tK_{\lambda}(t) = \theta_1(t, i\lambda).
\end{equation}
If $-1 \le t \le 1$ then the Fourier transforms $t\mapsto \tL_{\lambda}(t)$ and $t\mapsto \tM_{\lambda}(t)$ are given by
\begin{equation}\label{intro33}
\tL_{\lambda}(t) = (1 - |t|)\theta_1(t, i\lambda) - (2\pi)^{-1}\lambda \sgn(t)\frac{\partial\theta_1}{\partial t}(t, i\lambda),
\end{equation}
and
\begin{equation}\label{intro34}
\tM_{\lambda}(t) = (1 - |t|)\theta_3(t, i\lambda) - (2\pi)^{-1}\lambda \sgn(t)\frac{\partial\theta_3}{\partial t}(t, i\lambda).
\end{equation}
\end{theorem}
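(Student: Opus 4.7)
The plan is to rewrite each of $K_\lambda$, $L_\lambda$, $M_\lambda$ as a Hermite-type sampling expansion in a sinc (or squared-sinc) basis, take the Fourier transform termwise, and identify the resulting trigonometric series with theta functions evaluated at $\tau = i\lambda$. The key simplification is the nodal identity $\cos\pi z = (-1)^{n+1}\sin(\pi(z-n-\hh))$, which collapses the $n$-th summand of (\ref{intro2}) to $G_\lambda(n+\hh)\,\sin(\pi(z-n-\hh))/(\pi(z-n-\hh))$, so that
\begin{equation*}
K_\lambda(z) = \sum_{n\in\Z} G_\lambda\bigl(n+\hh\bigr)\,\mathrm{sinc}\bigl(z-n-\hh\bigr)
\end{equation*}
is the classical Shannon series. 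Squaring, $(\cos\pi z/\pi)^2/(z-n-\hh)^2 = \mathrm{sinc}^2(z-n-\hh)$, which recasts $L_\lambda$ as a linear combination of shifted $\mathrm{sinc}^2$ functions and shifted $(z-n-\hh)\,\mathrm{sinc}^2$ functions; the analogous calculation using $\sin\pi z = (-1)^n\sin(\pi(z-n))$ puts $M_\lambda$ in the same form centred at integer nodes.

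Next I take Fourier transforms termwise, using the classical pairs $\widehat{\mathrm{sinc}} = \mathbf{1}_{[-\hh,\hh]}$ and $\widehat{\mathrm{sinc}^2}(t) = (1-|t|)_+$, the translation rule $\widehat{f(\cdot-a)}(t) = e(-at)\widehat{f}(t)$, and $\widehat{xf}(t) = (i/2\pi)\,\widehat{f}'(t)$. The last identity, applied to $\mathrm{sinc}^2$, produces the jump function $-(i\sgn(t)/2\pi)\mathbf{1}_{[-1,1]}(t)$, which is exactly what will generate the $\sgn(t)\,\partial_t\theta_j$ factors. The interchange of summation and Fourier transform is justified by the super-exponential decay of $G_\lambda(n+\hh)$ and $G_\lambda'(n+\hh)$. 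This leaves $\tK_\lambda(t)$ as $\mathbf{1}_{[-\hh,\hh]}(t)$ times $\sum_n G_\lambda(n+\hh)\,e(-(n+\hh)t)$, and $\tL_\lambda(t)$, on $[-1,1]$, as $(1-|t|)$ times the same value sum plus $-(i\sgn(t)/2\pi)$ times the derivative sum $\sum_n G_\lambda'(n+\hh)\,e(-(n+\hh)t)$; the formula for $\tM_\lambda$ is analogous with integer nodes.

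To close the argument, I identify these exponential series with theta functions. From (\ref{theta1})--(\ref{theta3}) at $\tau=i\lambda$, together with the evenness of $\theta_1(\cdot,i\lambda)$ and $\theta_3(\cdot,i\lambda)$, the value sums equal $\theta_1(t,i\lambda)$ and $\theta_3(t,i\lambda)$ respectively. The identity $G_\lambda'(x) = -2\pi\lambda x\,G_\lambda(x)$ converts the derivative sum into $-2\pi\lambda\sum_n(n+\hh)G_\lambda(n+\hh)\,e(-(n+\hh)t)$, which equals $-2\pi\lambda(2\pi i)^{-1}\partial_v\theta_1(v,i\lambda)\bigr|_{v=-t}$; since $\theta_1(\cdot,i\lambda)$ is even its $v$-derivative is odd, and combining with the prefactor $-(i\sgn(t)/2\pi)$ gives exactly the $-(2\pi)^{-1}\lambda\sgn(t)\,\partial_t\theta_1(t,i\lambda)$ term in (\ref{intro33}); the same computation with $\theta_3$ in place of $\theta_1$ yields (\ref{intro34}). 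The only real subtlety is careful bookkeeping of signs and factors of $i$ through the chain ``differentiate the Gaussian, multiply by $z$, Fourier transform, change variable $v = -t$''; no analytic difficulty arises, as every series converges absolutely.
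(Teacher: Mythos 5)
Your computation is correct and delivers the right answer by the same underlying Fourier pairs ($\widehat{\mathrm{sinc}}=\mathbf{1}_{[-\h,\h]}$, $\widehat{\mathrm{sinc}^2}=(1-|t|)_+$, multiplication by $x\leftrightarrow (i/2\pi)\,d/dt$), but you run it in the opposite direction from the paper. The paper starts from the theta-function side: it expands $\theta_1(t,i\lambda)$ (or the weighted versions), integrates the absolutely and uniformly convergent series term by term over the \emph{compact} interval $[-\h,\h]$ or $[-1,1]$, recognizes the result as $K_\lambda(z)$, $L_\lambda(z)$, $M_\lambda(z)$, and then invokes Fourier inversion. That orientation makes the interchange of sum and integral trivial. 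Your orientation hides a genuine technical subtlety that the hand-wave ``super-exponential decay of $G_\lambda(n+\hh)$ justifies the interchange'' does not cover: the building blocks $\mathrm{sinc}(x-n-\hh)$ and $(x-n-\hh)\,\mathrm{sinc}^2(x-n-\hh)$ decay only like $1/|x|$ and are \emph{not} in $L^1(\R)$, so you cannot push the $L^1$ Fourier transform through the series by dominated convergence, no matter how fast the coefficients decay. The step can be rescued (each term is in $L^2$, the shifted-sinc system is orthogonal so the series converges in $L^2$, one knows separately that $K_\lambda$, $L_\lambda$, $M_\lambda$ lie in $L^1\cap L^2$ so the two Fourier transforms agree a.e., and continuity of $\widehat{K}_\lambda$ together with $\theta_1(\pm\h,i\lambda)=0$ upgrades the identity to a pointwise one on $[-\h,\h]$), but you should say so; as written the justification is aimed at the wrong obstacle. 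Aside from that, your sign-and-$i$ bookkeeping through $G_\lambda'(x)=-2\pi\lambda x\,G_\lambda(x)$, the oddness of $\partial_v\theta_j$, and the $v\mapsto -t$ substitution is correct and reproduces (\ref{intro33})--(\ref{intro34}) exactly.
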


Theorem \ref{thm4} plainly provides an alternative representation for each of the entire functions $K_{\lambda}(z)$, $L_{\lambda}(z)$,
and $M_{\lambda}(z)$.  Because $t\mapsto \tK_{\lambda}(t)$ is continuous and has compact support, the Fourier inversion
formula and (\ref{intro32}) imply that
\begin{equation*}\label{intro35}
K_{\lambda}(z) = \int_{-\infty}^{\infty} \tK_{\lambda}(t) e(zt)\ \dt = \int_{-\h}^{\h} \theta_1(t, i\lambda) e(zt)\ \dt
\end{equation*}
for all complex $z$.  Analogous representations hold for the functions $L_{\lambda}(z)$ and $M_{\lambda}(z)$ using
(\ref{intro33}) and (\ref{intro34}).

The extremal entire functions that we have identified here can be used to determine corresponding extremal trigonometric 
polynomials associated to the theta functions.  Let $N$ be a non-negative integer.  We define
\begin{align}\label{intro36}
\begin{split}
k_{\lambda, N}(x) &= \lambda^{\h} \sum_{n=-\infty}^{\infty} K_{(2N+2)^{-2}\lambda}\bigl((2N+2)(x + n)\bigr) \\
	&= \lambda^{\h} (2N+2)^{-1} \sum_{n=-N}^N \tK_{(2N+2)^{-2}\lambda}\Bigl(\frac{n}{2N+2}\Bigr) e(nx),
\end{split}
\end{align}
where the equality of these sums follows from the Poisson summation formula and the fact that $\tK_{(2N+2)^{-2}\lambda}(t) = 0$
for $\h \le |t|$.  In particular, $k_{\lambda, N}(x)$ is a trigonometric polynomial of degree $N$ defined on the quotient group $\R/\Z$.
We will show that this trigonometric polynomial is the best approximation to the the theta function 
$x\mapsto \theta_3\bigr(x, i\lambda^{-1}\bigr)$ in $L^1$-norm on $\R/\Z$.

\begin{theorem}\label{thm5}
Let $p(x)$ be a trigonometric polynomial of degree at most $N$ defined on $\R/\Z$.  Then
\begin{equation}\label{intro40}
\int_{-\h}^{\h} \theta_1\bigl(u, i\lambda^{-1}(2N+2)^2\bigr)\ \du 
	\le \int_{\R/\Z} \bigl|\theta_3\bigl(x, i\lambda^{-1}\bigr) - p(x)\bigr|\ \dx,
\end{equation}
and there is equality in {\rm (\ref{intro40})} if and only if $p(x) = k_{\lambda, N}(x)$.
\end{theorem}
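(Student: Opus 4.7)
The plan is to realize the trigonometric extremal problem as a periodized version of the $L^1$-Gaussian problem already solved in Theorem~\ref{thm1}. The key observation is that, by (\ref{poisson3}), $\theta_3\bigl(x,i\lambda^{-1}\bigr) = \lambda^{\h}\sum_{n\in\Z} G_\lambda(x-n)$, and that (\ref{intro36}) exhibits $k_{\lambda,N}$ as the periodization of $\lambda^{\h} F$, where $F(y) := K_{\lambda(2N+2)^{-2}}\bigl((2N+2)y\bigr)$ is the best $L^1$-approximant to $G_\lambda$ of exponential type $\pi(2N+2)$ (by the scaling remark following Theorem~\ref{thm3}). The lower bound will be extracted by duality against a carefully chosen sign function.

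Writing $M = 2N+2$, I would introduce the $1$-periodic square wave
\[
\sigma(x) = \sgn \cos(\pi M x),
\]
whose classical Fourier expansion $\tfrac{4}{\pi}\sum_{m\ge 0}\tfrac{(-1)^m}{2m+1}\cos\bigl((2m+1)\pi M x\bigr)$ is supported in the frequencies $\{\pm(2m+1)(N+1)\}_{m\ge 0}$. Since these all have absolute value at least $N+1$, $\sigma$ annihilates every trigonometric polynomial of degree at most $N$; therefore, for any such $p$,
\[
\int_{-\h}^{\h}\bigl|\theta_3\bigl(x,i\lambda^{-1}\bigr) - p(x)\bigr|\,\dx \;\ge\; \bigg|\int_{-\h}^{\h}\theta_3\bigl(x,i\lambda^{-1}\bigr)\sigma(x)\,\dx\bigg| \;=\; \lambda^{\h}\bigg|\int_\R G_\lambda(y)\sigma(y)\,\dy\bigg|,
\]
the last equality coming from the $1$-periodicity of $\sigma$ together with (\ref{poisson3}).

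To evaluate the unfolded integral, I would expand $\sigma$ in its Fourier series and apply $\int_\R e^{-\pi\lambda y^2}\cos(2\pi ty)\,\dy = \lambda^{-\h} e^{-\pi t^2/\lambda}$ term by term to obtain $\tfrac{4}{\pi}\sum_{m\ge 0}\tfrac{(-1)^m}{2m+1}e^{-\pi(2m+1)^2(N+1)^2/\lambda}$. Separately, expanding (\ref{theta1}) and integrating in $u$ over $[-\h,\h]$, with the pairing $n\leftrightarrow -1-n$, yields the same sum for $\int_{-\h}^{\h}\theta_1(u,i\lambda^{-1}M^2)\,\du$, so (\ref{intro40}) follows. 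Structurally this matching reflects the orthogonality $\int_\R F\sigma\,\dy = 0$, which rests on $\tF(\pm(N+1)) = M^{-1}\theta_1(\pm\h,i\lambda M^{-2}) = 0$ — a vanishing that follows from the evenness of $\theta_1$ in $v$ combined with (\ref{theta4}).

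For equality and uniqueness, the periodization identity $(\theta_3 - k_{\lambda,N})(x) = \lambda^{\h}\sum_n (G_\lambda - F)(x-n)$ has every summand carrying the common $1$-periodic sign $\sigma(x)$ forced by the interpolation of $K_{\lambda M^{-2}}$ at $\Z + \h$, so the triangle inequality saturates and $k_{\lambda,N}$ attains the bound. If another $p$ also attains equality, then $\sgn(\theta_3 - p) = \sigma$ almost everywhere, so $\theta_3 - p$ vanishes at each of the $2(N+1)$ sign-change points of $\sigma$ in $[0,1)$; the same is true of $\theta_3 - k_{\lambda,N}$, so the difference $k_{\lambda,N} - p$ is a trigonometric polynomial of degree at most $N$ with $2N+2 > 2N$ zeros per period and must vanish identically. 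The hard step is the endpoint identity $\theta_1(\h,i\mu) = 0$: it is what makes $k_{\lambda,N}$ a genuine degree-$N$ trigonometric polynomial (rather than degree $N+1$), and it is what causes the Fourier-side computation to land precisely on the claimed extremal constant.
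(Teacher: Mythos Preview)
Your proof is correct and follows essentially the same route as the paper's: duality against the square wave $\sgn\cos\bigl(\pi(2N+2)x\bigr)$ for the lower bound, the sign identity (\ref{theta8}) for the case of equality, and the $2N+2$ forced interpolation conditions for uniqueness. The only cosmetic difference is that you unfold to $\R$ via (\ref{poisson3}) before evaluating the duality integral, whereas the paper stays on $\R/\Z$ and reads off the Fourier coefficients of $\theta_3$ directly; the two computations are equivalent.
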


In a similar manner, we define
\begin{equation}\label{intro41}
\begin{split}
l_{\lambda, N}(x) &= \lambda^{\h} \sum_{n=-\infty}^{\infty} L_{(N+1)^{-2}\lambda}\bigl((N+1)(x + n)\bigr) \\
	&= \lambda^{\h} (N+1)^{-1} \sum_{n=-N}^N \tL_{(N+1)^{-2}\lambda}\Bigl(\frac{n}{N+1}\Bigr) e(nx),
\end{split}
\end{equation}
and
\begin{equation}\label{intro42}
\begin{split}
m_{\lambda, N}(x) &= \lambda^{\h} \sum_{n=-\infty}^{\infty} M_{(N+1)^{-2}\lambda}\bigl((N+1)(x + n)\bigr) \\
	&= \lambda^{\h} (N+1)^{-1} \sum_{n=-N}^N \tM_{(N+1)^{-2}\lambda}\Bigl(\frac{n}{N+1}\Bigr) e(nx).
\end{split}
\end{equation}
Again the identities in (\ref{intro41}) and (\ref{intro42}) follow from the Poisson summation formula and the fact that
$\tL_{(N+1)^{-2}\lambda}(t) = \tM_{(N+1)^{-2}\lambda}(t) = 0$ for $1 \le |t|$.  Both of the functions $l_{\lambda, N}(x)$ and
$m_{\lambda, N}(x)$ are real valued trigonometric polynomials of degree $N$ defined on the quotient group $\R/\Z$.  It follows
from (\ref{intro5}) and (\ref{poisson3}) that they satisfy the inequality
\begin{equation}\label{intro43}
l_{\lambda, N}(x) \le \theta_3\bigl(x, i\lambda^{-1}\bigr) \le m_{\lambda, N}(x)
\end{equation}
at each point $x$ in $\R/\Z$.  We will prove that these trigonometric polynomials are the extreme minorant and majorant
for the function $x\mapsto \theta_3\bigr(x, i\lambda^{-1}\bigr)$ on $\R/\Z$.

\begin{theorem}\label{thm6}
If $q(x)$ is a real valued trigonometric polynomial of degree at most $N$ such that
\begin{equation}\label{intro44}
q(x) \le \theta_3\bigl(x, i\lambda^{-1}\bigr)
\end{equation}
at each point $x$ in $\R/\Z$, then
\begin{equation}\label{intro45}
\int_{\R/\Z} q(x)\ \dx \le \theta_2\bigl(0, i\lambda^{-1}(N+1)^2\bigr).
\end{equation}
Moreover, there is equality in {\rm (\ref{intro45})} if and only if $q(x) = l_{\lambda, N}(x)$.
If $r(x)$ is a real valued trigonometric polynomial of degree at most $N$ such that
\begin{equation}\label{intro46}
\theta_3\bigl(x, i\lambda^{-1}\bigr) \le r(x)
\end{equation}
at each point $x$ in $\R/\Z$, then
\begin{equation}\label{intro47}
\theta_3\bigl(0, i\lambda^{-1}(N+1)^2\bigr) \le \int_{\R/\Z} r(x)\ \dx.
\end{equation}
Moreover, there is equality in {\rm (\ref{intro47})} if and only if $r(x) = m_{\lambda, N}(x)$.
\end{theorem}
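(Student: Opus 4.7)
The plan is to reduce (\ref{intro45}) and (\ref{intro47}) to a Gauss-type quadrature formula on $\R/\Z$, using that the trigonometric polynomials $l_{\lambda,N}$ and $m_{\lambda,N}$ inherit Hermite-style interpolation properties from those of $L_{\mu}$ and $M_{\mu}$ at $\Z+\h$ and $\Z$, respectively, with $\mu=(N+1)^{-2}\lambda$.  Setting $x_j = (j+\h)/(N+1)$, we have $(N+1)(x_j + n) \in \Z + \h$ for every $n \in \Z$, so the interpolation properties of $L_\mu$ combined with the scaling identity $G_\mu((N+1)y) = G_\lambda(y)$ and (\ref{poisson3}) yield
\begin{equation*}
l_{\lambda,N}(x_j) = \theta_3(x_j, i\lambda^{-1}), \qquad l_{\lambda,N}'(x_j) = \tfrac{\partial}{\partial v}\theta_3(x_j, i\lambda^{-1}).
\end{equation*}
An analogous calculation at $y_j = j/(N+1)$ shows that $m_{\lambda,N}$ matches $\theta_3(\cdot, i\lambda^{-1})$ and its derivative at the $N+1$ nodes $y_0,\ldots,y_N$.

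The analytic input is the exact quadrature
\begin{equation*}
\int_{\R/\Z} p(x)\,\dx = \frac{1}{N+1}\sum_{j=0}^N p\Bigl(\frac{j+a}{N+1}\Bigr),
\end{equation*}
valid for every real $a$ and every trigonometric polynomial $p$ of degree at most $N$ (as is seen at once by testing on the exponentials $e(nx)$, $|n|\leq N$).  Given $q$ as in (\ref{intro44}), the difference $q - l_{\lambda,N}$ has degree at most $N$, so applying this quadrature at $a=\h$ together with the pointwise inequality $q \leq \theta_3(\cdot, i\lambda^{-1}) = l_{\lambda,N}$ at each $x_j$ yields $\int_{\R/\Z}(q-l_{\lambda,N})\,\dx \leq 0$.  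On the other hand, reading off the $n=0$ coefficient in the second formula of (\ref{intro41}) and invoking the equality case of Theorem \ref{thm2} gives
\begin{equation*}
\int_{\R/\Z} l_{\lambda,N}(x)\,\dx = \lambda^{\h}(N+1)^{-1}\tL_{\mu}(0) = \lambda^{\h}(N+1)^{-1}\mu^{-\h}\theta_2(0, i\mu^{-1}) = \theta_2\bigl(0, i\lambda^{-1}(N+1)^2\bigr),
\end{equation*}
which proves (\ref{intro45}).  The proof of (\ref{intro47}) is identical with $a=0$ and Theorem \ref{thm3} replacing Theorem \ref{thm2}.

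For uniqueness, equality in (\ref{intro45}) forces $q(x_j) = \theta_3(x_j, i\lambda^{-1})$ for $j=0,\ldots,N$.  Since $q \leq \theta_3(\cdot, i\lambda^{-1})$ on $\R/\Z$, each $x_j$ is a local maximum of the smooth function $q - \theta_3(\cdot, i\lambda^{-1})$, so $q'(x_j) = \tfrac{\partial}{\partial v}\theta_3(x_j, i\lambda^{-1})$ as well.  Thus $q-l_{\lambda,N}$ is a trigonometric polynomial of degree at most $N$ with a double zero at each of the $N+1$ nodes, hence $2(N+1) > 2N$ zeros counted with multiplicity.  Substituting $w=e(x)$ recasts any such polynomial as $w^{-N}P(w)$ with $P \in \C[w]$ of degree at most $2N$; since $P$ would have too many roots on the unit circle, we conclude $P \equiv 0$, i.e.\ $q=l_{\lambda,N}$.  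The uniqueness statement $r=m_{\lambda,N}$ is argued verbatim.

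The main obstacle I anticipate is bookkeeping the scaling carefully enough to verify that $\int l_{\lambda,N}$ and $\int m_{\lambda,N}$ equal exactly $\theta_2(0, i\lambda^{-1}(N+1)^2)$ and $\theta_3(0, i\lambda^{-1}(N+1)^2)$: these identities depend on correctly propagating the factor $\mu^{-\h} = (N+1)\lambda^{-\h}$ through the substitution $\mu = (N+1)^{-2}\lambda$ and the Fourier rescaling implicit in the periodization operator.  Everything else reduces to the three short calculations above (interpolation, quadrature, and zero count), but an arithmetic slip at the normalization step would be the one thing that could derail the argument.
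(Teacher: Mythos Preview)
Your proof is correct and follows essentially the same route as the paper's: both arguments hinge on the exact quadrature $\int_{\R/\Z}p=\frac{1}{N+1}\sum_{j}p\bigl(\tfrac{j+a}{N+1}\bigr)$ at the Hermite interpolation nodes, together with the fact that $l_{\lambda,N}$ (resp.\ $m_{\lambda,N}$) matches $\theta_3(\cdot,i\lambda^{-1})$ to first order at those nodes. The only cosmetic difference is that the paper bounds $\int q$ directly by $\frac{1}{N+1}\sum_j\theta_3(x_j,i\lambda^{-1})$ and then evaluates this sum via (\ref{poisson2}), whereas you route through $\int(q-l_{\lambda,N})\le 0$ and evaluate $\int l_{\lambda,N}$ from the $n=0$ Fourier coefficient in (\ref{intro41}); the two computations are equivalent, and your scaling bookkeeping for $\mu=(N+1)^{-2}\lambda$ is correct.
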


\section{Integral Representations}

\begin{lemma}\label{lem1}
Let $z$ and $w$ be distinct complex numbers.  Then we have
\begin{align}\label{gauss1}
\begin{split}
\frac{G_{\lambda}(z) - G_{\lambda}(w)}{z - w}
	&= 2\pi \lambda^{\g} \int_{-\infty}^0 \int_{-\infty}^0 e^{-2\pi\lambda tu} G_{\lambda}(z - t)G_{\lambda}(w-u)\ \du\ \dt \\
        &\qquad - 2\pi \lambda^{\g} \int_0^{\infty} \int_0^{\infty} e^{-2\pi\lambda tu} G_{\lambda}(z - t)G_{\lambda}(w-u)\ \du\ \dt.
\end{split}
\end{align}
\end{lemma}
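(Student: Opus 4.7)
The plan is to compute the right-hand side directly. The key observation is that in the combined exponent
\[
-\pi\lambda(z-t)^2 - \pi\lambda(w-u)^2 - 2\pi\lambda tu,
\]
the cross term $-2\pi\lambda tu$ is exactly what is needed to complete $-\pi\lambda t^2 - \pi\lambda u^2 - 2\pi\lambda tu = -\pi\lambda(t+u)^2$. Thus after factoring out $G_\lambda(z)G_\lambda(w) = e^{-\pi\lambda z^2 - \pi\lambda w^2}$, the exponent becomes
\[
2\pi\lambda zt + 2\pi\lambda wu - \pi\lambda(t+u)^2,
\]
which is linear in $t,u$ once one parametrizes by the sum $s = t+u$. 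This reduces each double integral to a one-dimensional Gaussian integral.

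Concretely, I would proceed as follows. First, pull out the constant factor $G_\lambda(z)G_\lambda(w)$. Second, change variables $(t,u)\mapsto(s,t)$ with $s = t+u$ in the $(0,\infty)^2$ integral (so $0\le t\le s$, $s\ge0$) and rewrite $2\pi\lambda zt + 2\pi\lambda wu = 2\pi\lambda w s + 2\pi\lambda(z-w)t$. The inner integral over $t$ is elementary and gives
\[
\frac{e^{2\pi\lambda(z-w)s} - 1}{2\pi\lambda(z-w)},
\]
so the $(0,\infty)^2$ integral collapses to $\frac{1}{2\pi\lambda(z-w)}\int_0^\infty e^{-\pi\lambda s^2}\bigl(e^{2\pi\lambda zs}-e^{2\pi\lambda ws}\bigr)\ds$ times $G_\lambda(z)G_\lambda(w)$. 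Third, apply the substitution $t\mapsto-t$, $u\mapsto -u$ to the $(-\infty,0)^2$ integral to reduce it to the same form; after running the same computation one obtains $\frac{1}{2\pi\lambda(z-w)}\int_0^\infty e^{-\pi\lambda s^2}\bigl(e^{-2\pi\lambda ws}-e^{-2\pi\lambda zs}\bigr)\ds$ times $G_\lambda(z)G_\lambda(w)$. Fourth, subtract; the half-line integrals combine (using the evenness of $e^{-\pi\lambda s^2}$) into
\[
\int_{-\infty}^\infty e^{-\pi\lambda s^2}\bigl(e^{2\pi\lambda ws} - e^{2\pi\lambda zs}\bigr)\ds.
\]
Completing the square gives $\int_{-\infty}^\infty e^{-\pi\lambda s^2 + 2\pi\lambda w s}\ds = \lambda^{-1/2}e^{\pi\lambda w^2} = \lambda^{-1/2}G_\lambda(w)^{-1}$, and similarly for $z$. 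Collecting the constants (the prefactor $2\pi\lambda^{3/2}$ combines with $1/(2\pi\lambda)$ to give $\lambda^{1/2}$, which cancels the $\lambda^{-1/2}$ from the Gaussian), one is left with $(G_\lambda(z) - G_\lambda(w))/(z-w)$, as desired.

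The only non-routine issue is justifying absolute convergence for complex $z,w$ so that Fubini and the reordering of integrals are legitimate. On each of the two quadrants $\{t,u\ge0\}$ and $\{t,u\le0\}$ the factor $e^{-2\pi\lambda tu}$ is bounded by $1$, and $|G_\lambda(z-t)G_\lambda(w-u)|$ is dominated by $e^{-\pi\lambda(t^2 + u^2) + C(|t|+|u|)}$ for a constant $C$ depending only on $\Re z$ and $\Re w$, so the double integral converges absolutely. The only potentially subtle step -- the expression $(z-w)^{-1}(\cdots)$ in the intermediate formulas -- is harmless since we assumed $z\ne w$ from the start; the division by $z-w$ is a genuine simplification, not a removable singularity to be chased.
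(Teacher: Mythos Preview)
Your proof is correct. The algebraic core---the exponent identity that collapses $-\pi\lambda t^2-\pi\lambda u^2-2\pi\lambda tu$ to $-\pi\lambda(t+u)^2$, together with the Gaussian integral $\int_{-\infty}^\infty e^{-\pi\lambda s^2+2\pi\lambda ws}\,\ds=\lambda^{-1/2}G_\lambda(w)^{-1}$---is exactly what the paper uses as well. The paper simply runs the computation in the opposite direction: it starts from the left-hand side, writes $\dfrac{G(z)-G(w)}{z-w}=G(z)G(w)\,\dfrac{G(w)^{-1}-G(z)^{-1}}{z-w}$, expresses $G(w)^{-1}-G(z)^{-1}$ via the one-dimensional Gaussian integral, unfolds the difference quotient of exponentials into an inner integral, swaps the order of integration, and then recognizes the product $G_\lambda(z-t)G_\lambda(w-u)e^{-2\pi\lambda tu}$ via the identity $G(z)G(w)e^{2\pi(wt+zu)}G(t+u)=G(z-u)G(w-t)e^{-2\pi tu}$. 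Your approach---computing the right-hand side directly via the substitution $s=t+u$---is arguably the more transparent of the two, since it is a straight verification rather than a derivation that requires guessing how to split a single integral into a double one.
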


\begin{proof}
It suffices to prove the identity (\ref{gauss1}) for $\lambda = 1$, then the general case will follow from an elementary change of 
variables.  Therefore we simplify our notation and write $G(z) = G_1(z)$.  We note that $G(z)$ satisfies the identity
\begin{equation}\label{gauss2}
G(z)^{-1} = \int_{-\infty}^{\infty} e^{2\pi zt} G(t)\ \dt
\end{equation}
for all complex numbers $z$, and the identity
\begin{equation}\label{gauss3}
G(z)G(w) e^{2\pi zw} = G(z - w)
\end{equation}
for all pairs of complex numbers $z$ and $w$.  From (\ref{gauss2}) we get
\begin{align}\label{gauss4}
\begin{split}
\frac{G(z) - G(w)}{z - w} &= G(z)G(w)\bigg\{\frac{G(w)^{-1} - G(z)^{-1}}{z - w}\bigg\} \\ 
	&= G(z)G(w)(z - w)^{-1} \int_{-\infty}^{\infty} \big\{e^{2\pi wt} - e^{2\pi zt}\big\} G(t)\ \dt.
\end{split}
\end{align}
Then using Fubini's theorem we find that
\begin{align}\label{gauss5}
\begin{split}
(z - w)^{-1}\int_{-\infty}^{\infty} &\big\{e^{2\pi wt} - e^{2\pi zt}\big\} G(t)\ \dt \\
   &= 2\pi \int_{-\infty}^0 \bigg\{\int_t^0 e^{2\pi(z - w)u}\ \du\bigg\} e^{2\pi wt} G(t)\ \dt \\
   &\qquad - 2\pi \int_0^{\infty} \bigg\{\int_0^t e^{2\pi(z - w)u}\ \du\bigg\} e^{2\pi wt} G(t)\ \dt \\
   &= 2\pi \int_{-\infty}^0 \bigg\{\int_{-\infty}^u e^{2\pi wt} G(t)\ \dt\bigg\} e^{2\pi(z - w)u}\ \du \\
   &\qquad - 2\pi \int_0^{\infty} \bigg\{\int_u^{\infty} e^{2\pi wt} G(t)\ \dt\bigg\} e^{2\pi(z - w)u}\ \du \\
   &= 2\pi \int_{-\infty}^0 \bigg\{\int_{-\infty}^0 e^{2\pi w(t+u)} G(t+u)\ \dt\bigg\} e^{2\pi(z - w)u}\ \du \\
   &\qquad - 2\pi \int_0^{\infty} \bigg\{\int_0^{\infty} e^{2\pi w(t+u)} G(t+u)\ \dt\bigg\} e^{2\pi(z - w)u}\ \du \\
   &= 2\pi \int_{-\infty}^0 \int_{-\infty}^0 e^{2\pi (wt + zu)} G(t+u)\ \dt\ \du \\
   &\qquad - 2\pi \int_0^{\infty} \int_0^{\infty} e^{2\pi (wt + zu)} G(t+u)\ \dt\ \du. \\
\end{split}
\end{align}
Next we apply (\ref{gauss3}) twice and get
\begin{align}\label{gauss6}
\begin{split}
G(z)G(w)e^{2\pi (wt + zu)} G(t+u) &= G(z)G(w)G(u)G(t)e^{-2\pi tu + 2\pi wt + 2\pi zu} \\
                                  &= G(z - u)G(w - t) e^{-2\pi tu}.
\end{split}
\end{align}
Then we combine (\ref{gauss4}), (\ref{gauss5}) and (\ref{gauss6}) to obtain the special case
\begin{align}\label{gauss7}
\begin{split}
\frac{G(z) - G(w)}{z - w}
	&= 2\pi \int_{-\infty}^0 \int_{-\infty}^0 e^{-2\pi tu} G(z - t)G(w-u)\ \du\ \dt \\
        &\qquad - 2\pi \int_0^{\infty} \int_0^{\infty} e^{-2\pi tu} G(z - t)G(w-u)\ \du\ \dt.
\end{split}
\end{align}
The more general identity (\ref{gauss1}) follows by replacing $z$ with $\lambda^{\h}z$, by 
replacing $w$ with $\lambda^{\h}w$, and by making a corresponding change of variables in each integral 
on the right of (\ref{gauss7}).
\end{proof}

\begin{lemma}\label{lem2}
Let $z$ and $w$ be distinct complex numbers.  Then we have
\begin{align}\label{gauss10}
\begin{split}
&\frac{G_{\lambda}(z)}{(z - w)^2} - \frac{G_{\lambda}(w)}{(z - w)^2} - \frac{G_{\lambda}^{\prime}(w)}{z - w}\\
	&= (2\pi)^2 \lambda^{\f} \int_{-\infty}^0 \int_{-\infty}^0 t e^{-2\pi\lambda tu} G_{\lambda}(z - t)
		\big\{G_{\lambda}(w) - G_{\lambda}(w-u)\big\}\ \du\ \dt \\
        &\quad -(2\pi)^2 \lambda^{\f} \int_0^{\infty} \int_0^{\infty} t e^{-2\pi\lambda tu} G_{\lambda}(z - t)
		\big\{G_{\lambda}(w) - G_{\lambda}(w-u)\big\}\ \du\ \dt.
\end{split}
\end{align}
\end{lemma}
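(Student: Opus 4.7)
My plan is to differentiate the identity~(\ref{gauss1}) of Lemma~\ref{lem1} with respect to $w$. A direct application of the quotient rule shows that the left side of~(\ref{gauss10}) equals
\[
\frac{G_\lambda(z) - G_\lambda(w) - G_\lambda'(w)(z-w)}{(z-w)^2} \;=\; \frac{\partial}{\partial w}\left\{\frac{G_\lambda(z) - G_\lambda(w)}{z - w}\right\},
\]
so the left side will be immediate. On the right side of~(\ref{gauss1}), the rapid Gaussian decay of $G_\lambda(w-u)$ and of $G_\lambda'(w-u)$, uniform on compact sets in $w$, will justify differentiation under the integral sign, which replaces $G_\lambda(w-u)$ by $G_\lambda'(w-u)$ in each of the two double integrals. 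It will then remain to convert the resulting expression into the claimed right side of~(\ref{gauss10}).

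The key step is an integration by parts in the inner variable $u$, based on the identity $G_\lambda'(w-u) = -\partial_u G_\lambda(w-u)$. On both the $(-\infty,0)^2$ and $(0,\infty)^2$ regions, Gaussian decay annihilates the boundary contributions at $u = \mp\infty$, while the boundary at $u = 0$ produces $\pm G_\lambda(w)$. The residual volume integral picks up an extra factor $-2\pi\lambda t$ coming from $\partial_u e^{-2\pi\lambda tu}$, and thus immediately yields a contribution of the form
\[
-(2\pi)^2\lambda^{\f}\left[\iint_{(-\infty,0)^2} - \iint_{(0,\infty)^2}\right] t\, e^{-2\pi\lambda tu}\, G_\lambda(z-t) G_\lambda(w-u)\,\du\,\dt,
\]
which matches the $-G_\lambda(w-u)$ summand in the right side of~(\ref{gauss10}).

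To handle the boundary contributions $\pm G_\lambda(w)$, I would use the elementary Gaussian identities $\int_{-\infty}^0 t\, e^{-2\pi\lambda tu}\,\du = -(2\pi\lambda)^{-1}$ for $t<0$, \; $\int_0^\infty t\, e^{-2\pi\lambda tu}\,\du = (2\pi\lambda)^{-1}$ for $t>0$, together with $\int_{-\infty}^\infty G_\lambda(z-t)\,\dt = \lambda^{-\h}$. These allow me to rewrite each boundary term as the $u$-integral of $\mp\, 2\pi\lambda t\, e^{-2\pi\lambda tu}\, G_\lambda(w)$ over the corresponding region, and adding this to the volume integral above merges the two pieces into the common factor $G_\lambda(w) - G_\lambda(w-u)$ with the correct overall constant $(2\pi)^2\lambda^{\f}$ and the correct signs on each region, producing~(\ref{gauss10}). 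The only genuine obstacle is the careful bookkeeping of signs and constants through the integration by parts; the analytic justifications (differentiation under the integral, vanishing of boundary terms at $\pm\infty$, and the Fubini step) are all routine consequences of the Gaussian decay already exploited in the proof of Lemma~\ref{lem1}.
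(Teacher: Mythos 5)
Your proposal is correct and takes essentially the same route as the paper: differentiate the identity of Lemma~\ref{lem1} with respect to $w$, then integrate by parts in $u$. The only cosmetic difference is in the choice of antiderivative in the integration by parts: the paper writes $e^{-2\pi\lambda tu}G_\lambda'(w-u) = e^{-2\pi\lambda tu}\,\partial_u\{G_\lambda(w)-G_\lambda(w-u)\}$, so that the boundary term at $u=0$ vanishes automatically (since $G_\lambda(w)-G_\lambda(w)=0$), whereas you use $-\partial_u G_\lambda(w-u)$, pick up a boundary value $\mp G_\lambda(w)$ at $u=0$, and then reconvert it into a $u$-integral via $\int_{-\infty}^0 t\,e^{-2\pi\lambda tu}\,\du = -(2\pi\lambda)^{-1}$ and $\int_0^{\infty} t\,e^{-2\pi\lambda tu}\,\du = (2\pi\lambda)^{-1}$; both bookkeepings give identities (\ref{gauss12})--(\ref{gauss13}) and the incidental mention of $\int_{-\infty}^{\infty} G_\lambda(z-t)\,\dt = \lambda^{-\h}$ is not actually needed.
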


\begin{proof}
We differentiate both sides of (\ref{gauss1}) with respect to $w$ and obtain the identity
\begin{align}\label{gauss11}
\begin{split}
&\frac{G_{\lambda}(z)}{(z - w)^2} - \frac{G_{\lambda}(w)}{(z - w)^2} - \frac{G_{\lambda}^{\prime}(w)}{z - w}\\
	&= 2\pi \lambda^{\g} \int_{-\infty}^0 \int_{-\infty}^0 e^{-2\pi\lambda tu} G_{\lambda}(z - t)
		G_{\lambda}^{\prime}(w-u)\ \du\ \dt \\
        &\quad - 2\pi \lambda^{\g} \int_0^{\infty} \int_0^{\infty} e^{-2\pi\lambda tu} G_{\lambda}(z - t)
		G_{\lambda}^{\prime}(w-u)\ \du\ \dt.
\end{split}\end{align}
Using integration by parts we get
\begin{align}\label{gauss12}
\begin{split}
\int_{-\infty}^0 e^{-2\pi\lambda tu}&G_{\lambda}^{\prime}(w-u)\ \du\\
	&= 2\pi\lambda \int_{-\infty}^0 t e^{-2\pi\lambda tu}
		\big\{G_{\lambda}(w) - G_{\lambda}(w-u)\big\}\ \du,
\end{split}
\end{align}
and
\begin{align}\label{gauss13}
\begin{split}
\int_0^{\infty} e^{-2\pi\lambda tu}&G_{\lambda}^{\prime}(w-u)\ \du\\
	&= 2\pi\lambda \int_0^{\infty} t e^{-2\pi\lambda tu}
		\big\{G_{\lambda}(w) - G_{\lambda}(w-u)\big\}\ \du.
\end{split}
\end{align}
The corollary follows now by combining (\ref{gauss11}), (\ref{gauss12}) and (\ref{gauss13}). 
\end{proof}

In order to apply the identities (\ref{poisson1}), (\ref{poisson2}) and (\ref{poisson3}), we require simple estimates for
certain partial sums.

\begin{lemma}\label{lem3}
For all real $u$ and positive integers $N$, we have
\begin{align}
\sum_{n=-N-1}^N (-1)^n G_{\lambda}(n + \hh - u) &\ll_{\lambda} \min\{1, |u|\},\label{poisson4}\\
\sum_{n=-N-1}^N \big\{G_{\lambda}(n + \hh) - G_{\lambda}(n + \hh - u)\big\} &\ll_{\lambda} \min\{1, |u|\},\label{poisson5}\\
\sum_{n=-N}^N \big\{G_{\lambda}(n) - G_{\lambda}(n - u)\big\} &\ll_{\lambda} \min\{1, |u|\},\label{poisson6}
\end{align}
where the constant implied by $\ll_{\lambda}$ depends on $\lambda$, but not on $u$ or $N$.
\end{lemma}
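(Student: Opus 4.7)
The plan is to handle each of the three estimates by splitting the range of $u$ into the regime $|u|\ge 1$ (where a bound of $O_\lambda(1)$ suffices) and the regime $|u|<1$ (where we must squeeze out a factor of $|u|$).

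In the large-$|u|$ regime, the triangle inequality reduces all three inequalities to the assertion that
\[
\sum_{n\in\Z} G_\lambda(n+\hh-u) \quad\text{and}\quad \sum_{n\in\Z} G_\lambda(n-u)
\]
are bounded uniformly in $u$. Both facts are immediate from the Poisson summation identities (\ref{poisson2}) and (\ref{poisson3}): the right-hand sides $\lambda^{-1/2}\theta_2(u,i\lambda^{-1})$ and $\lambda^{-1/2}\theta_3(u,i\lambda^{-1})$ are periodic of period $1$ and continuous, hence bounded, so the partial sums over $n\in\{-N-1,\dots,N\}$ or $\{-N,\dots,N\}$ are dominated by $\lambda$-dependent constants.

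In the small-$|u|$ regime, bounds (\ref{poisson5}) and (\ref{poisson6}) drop out of the mean value theorem: each summand satisfies
\[
|G_\lambda(y)-G_\lambda(y-u)| \le |u|\sup_{s\in[y-|u|,\,y+|u|]} |G_\lambda'(s)|,
\]
and for $|u|<1$ the resulting majorant, of the form $|u|\cdot C_\lambda(1+|n|)\,e^{-\pi\lambda n^2/2}$, is summable over $n\in\Z$ uniformly in $N$.

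The only non-routine estimate is (\ref{poisson4}), since the alternating sum does not obviously telescope. The key observation is that the index set $\{-N-1,\dots,N\}$ is precisely the one that is invariant under the involution $n\leftrightarrow -n-1$; combined with the evenness of $G_\lambda$ and the identity $(-1)^n+(-1)^{-n-1}=0$, this pairing yields
\[
\sum_{n=-N-1}^N (-1)^n G_\lambda(n+\hh-u) = \sum_{n=0}^N (-1)^n\bigl[G_\lambda(n+\hh-u)-G_\lambda(n+\hh+u)\bigr],
\]
after which the mean value argument of the previous paragraph applies to the bracketed difference and delivers the desired $|u|$-bound in the small-$|u|$ regime, while the large-$|u|$ regime is handled by the same uniform bound from (\ref{poisson2}). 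Recognizing this pairing symmetry — and thereby choosing the slightly asymmetric cutoff $\{-N-1,\dots,N\}$ in (\ref{poisson4}) and (\ref{poisson5}) — is the one step that requires any thought; the rest is elementary Gaussian decay.
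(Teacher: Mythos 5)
Your argument is correct and is essentially the same as the paper's. Both proofs exploit the symmetry of the range $\{-N-1,\dots,N\}$ under $n\mapsto -n-1$ together with the evenness of $G_\lambda$ --- the paper phrases this as the observation that $u\mapsto S_{\lambda,N}(u)$ is odd, hence $S_{\lambda,N}(0)=0$ and $S_{\lambda,N}(u)=\int_0^u S_{\lambda,N}'(v)\,\dv$, while you make the same cancellation explicit by pairing $n$ with $-n-1$ --- and both then extract the factor $|u|$ from a mean-value (equivalently fundamental-theorem-of-calculus) bound on $G_\lambda'$ combined with Gaussian summability, the $O_\lambda(1)$ branch of the minimum coming from the trivial bound on $\sum_n G_\lambda(n+\hh-u)$.
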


\begin{proof}
For each positive integer $N$,
\begin{equation*}\label{gauss14}
u\mapsto S_{\lambda, N}(u) = \sum_{n=-N-1}^N (-1)^n G_{\lambda}(n + \hh - u)
\end{equation*}
is an odd function of $u$.  Hence its derivative is an even function of $u$.  Therefore we get
\begin{align*}\label{gauss15}
\bigl|S_{\lambda, N}(u)\bigr| &= \biggl|\int_0^u S_{\lambda, N}^{\prime}(v)\ \dv\biggr|\\
	&\le \int_0^{|u|} \bigg\{\sum_{n=-\infty}^{\infty} \bigl|G_{\lambda}^{\prime}(n + \hh - v)\bigr|\bigg\}\ \dv\\
	&\le C_{\lambda} |u|,
\end{align*}
where
\begin{equation*}\label{gauss16}
C_{\lambda} = \sup_{v\in\R} \bigg\{\sum_{n=-\infty}^{\infty} \bigl|G_{\lambda}^{\prime}(n + \hh - v)\bigr|\bigg\}
\end{equation*}
is obviously finite.  We also have
\begin{equation*}\label{gauss17}
\bigl|S_{\lambda, N}(u)\bigr| \le \sup_{v\in\R} \bigg\{\sum_{n=-\infty}^{\infty} \bigl|G_{\lambda}(n + \hh - v)\bigr|\bigg\} < \infty,
\end{equation*}
and the bound (\ref{poisson4}) follows.

The proofs of (\ref{poisson5}) and (\ref{poisson6}) are very similar.
\end{proof}

We have noted that the entire function $z\mapsto G_{\lambda}(z) - K_{\lambda}(z)$ vanishes at each point of 
the coset $\Z + \hh$.  It follows that
\begin{equation*}\label{gauss20}
z\mapsto \frac{\pi}{\cos \pi z}\Big\{G_{\lambda}(z) - K_{\lambda}(z)\Big\}
\end{equation*}
is an entire function.

\begin{lemma}\label{lem4}
For all complex $z$ we have
\begin{align}\label{gauss21}
\begin{split}
\frac{\pi}{\cos \pi z}&\Big\{G_{\lambda}(z) - K_{\lambda}(z)\Big\}\\
	&= \pi\lambda \int_{-\infty}^{\infty}\frac{G_{\lambda}(z-t)}{\cosh \pi\lambda t}
		\int_{-\h}^{\h} \cosh 2\pi\lambda tu\ \theta_1\bigl(u, i\lambda^{-1}\bigr)\ \du\ \dt.
\end{split}
\end{align}
\end{lemma}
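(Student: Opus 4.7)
The plan is to expand the left side as a Mittag--Leffler partial fraction series, substitute Lemma \ref{lem1} term by term, sum the resulting series of Gaussians into a theta function, and finally evaluate a single Laplace-type $u$-integral using the periodicity and evenness of $\theta_1$.

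The starting point is the classical expansion
\[
\frac{\pi}{\cos \pi z} = \lim_{N \to \infty} \sum_{n=-N-1}^N \frac{(-1)^{n+1}}{z - n - \hh},
\]
which, combined with the definition (\ref{intro2}) of $K_{\lambda}$, gives
\[
\frac{\pi}{\cos \pi z}\bigl\{G_{\lambda}(z) - K_{\lambda}(z)\bigr\} = \lim_{N \to \infty} \sum_{n=-N-1}^N (-1)^{n+1}\,\frac{G_{\lambda}(z) - G_{\lambda}(n+\hh)}{z - n - \hh}.
\]
Into each term I substitute the integral representation (\ref{gauss1}) from Lemma \ref{lem1} with $w = n + \hh$. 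Interchanging the symmetric partial sum with the $du\,dt$ integrations and applying (\ref{poisson1}) in the shifted form
\[
\sum_{n=-\infty}^{\infty} (-1)^{n+1} G_{\lambda}(n + \hh - u) = \lambda^{-\hh}\,\theta_1\bigl(u + \hh, i\lambda^{-1}\bigr)
\]
(use $v = u - \hh$ in (\ref{poisson1}) and then $\theta_1(v-1,\tau) = -\theta_1(v,\tau)$) collapses the series into
\[
-2\pi\lambda \left\{\int_{-\infty}^0\!\!\int_{-\infty}^0 - \int_0^\infty\!\!\int_0^\infty\right\} e^{-2\pi\lambda tu}\,G_{\lambda}(z-t)\,\theta_1\bigl(u+\hh, i\lambda^{-1}\bigr)\,\du\,\dt.
\]

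Next I exploit the symmetries of $\theta_1$. Evenness together with the sign-flip $\theta_1(v+1, i\lambda^{-1}) = -\theta_1(v, i\lambda^{-1})$ yields $\theta_1(-u + \hh, i\lambda^{-1}) = -\theta_1(u+\hh, i\lambda^{-1})$. Substituting $u \mapsto -u$ and then $t \mapsto -t$ in the first double integral merges the two pieces into
\[
\frac{\pi(G_{\lambda}(z) - K_{\lambda}(z))}{\cos \pi z} = -2\pi\lambda \int_{-\infty}^{\infty} G_{\lambda}(z - t) \int_0^{\infty} e^{-2\pi\lambda|t|u}\,\theta_1\bigl(u+\hh, i\lambda^{-1}\bigr)\,\du\,\dt.
\]
Writing $\int_0^\infty = \sum_{k=0}^\infty \int_k^{k+1}$ and using $\theta_1(u + k + \hh) = (-1)^k\theta_1(u+\hh)$ reduces the inner $u$-integral to a geometric series of ratio $-e^{-2\pi\lambda|t|}$, which sums to a factor $e^{\pi\lambda|t|}/(2\cosh \pi\lambda t)$ multiplying $\int_0^1 e^{-2\pi\lambda|t|u}\,\theta_1(u+\hh)\,\du$. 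Shifting the integration variable by $\hh$, splitting $[\hh, \g]$ into $[\hh, 1]$ and $[1, \g]$, applying $\theta_1(v+1) = -\theta_1(v)$ on the second piece and the reflection $v \mapsto 1 - v$ (which invokes evenness and periodicity) on the first, rewrites the integral on $[\hh, \g]$ as $-e^{-2\pi\lambda|t|}\int_{-\hh}^{\hh}\cosh(2\pi\lambda tu)\,\theta_1(u, i\lambda^{-1})\,\du$. Assembling all the factors produces (\ref{gauss21}).

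The main technical obstacle is justifying the interchange of the symmetric partial sum with the double integrals in the first step, since the Mittag--Leffler series for $\sec$ is only conditionally convergent. Here Lemma \ref{lem3} is indispensable: the uniform bound
\[
\biggl|\sum_{n=-N-1}^N (-1)^n G_{\lambda}(n + \hh - u)\biggr| \ll_{\lambda} \min\{1, |u|\}
\]
supplies, in combination with the rapid decay of $e^{-2\pi\lambda tu} G_{\lambda}(z - t)$, a dominating function integrable on each quadrant; in particular, the $\min\{1,|u|\}$ factor handles the apparent singularity of $\iint u^{-1}\,\du\,\dt$-type terms near $u = 0$. Once the interchange is granted, the remaining manipulations are routine applications of the evenness and periodicity of $\theta_1$, and the identity extends from real to all complex $z$ by the fact that both sides are entire functions of $z$.
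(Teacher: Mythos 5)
Your argument follows the paper's proof in all essentials: the same Mittag--Leffler expansion \eqref{gauss22}, termwise substitution of Lemma \ref{lem1} with $w = n + \tfrac12$, identification of the resulting partial sums as a theta function via \eqref{poisson1}, and evaluation of the $u$-integral by period-decomposition into a geometric series --- with Lemma \ref{lem3} invoked, as in the paper, to justify passing to the limit $N\to\infty$ inside the double integral. The only differences are cosmetic bookkeeping: you use $\theta_1(u+\tfrac12)$ where the paper uses $\theta_1(u-\tfrac12)$ (related by \eqref{theta4}), you merge the two quadrant integrals via $u \mapsto -u$, $t\mapsto -t$ before evaluating the $u$-integral whereas the paper evaluates them separately as \eqref{gauss28}--\eqref{gauss29}, and you split $[\tfrac12,\tfrac32]$ into two halves rather than shifting by $\tfrac12$. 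One small slip: the first displayed double-integral after ``collapses the series into'' carries a spurious minus sign (it should read $+2\pi\lambda\bigl\{\int_{-\infty}^0\int_{-\infty}^0 - \int_0^\infty\int_0^\infty\bigr\}$), but your subsequent display is correct and the final outcome \eqref{gauss21} is recovered exactly.
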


\begin{proof}
We use the partial fraction expansion
\begin{equation}\label{gauss22}
\lim_{N\rightarrow \infty} \sum_{n=-N-1}^N \frac{(-1)^{n+1}}{z - n - \hh} = \frac{\pi}{\cos \pi z},
\end{equation}
which converges uniformly on compact subsets of $\C\setminus\{\Z + \h\}$.  Then it follows from (\ref{intro2}) and (\ref{gauss22}) that
\begin{align}\label{gauss23}
\begin{split}
\frac{\pi}{\cos \pi z}&\Big\{G_{\lambda}(z) - K_{\lambda}(z)\Big\}\\
&= \lim_{N\rightarrow \infty} \sum_{n=-N-1}^N (-1)^{n+1}\bigg\{\frac{G_{\lambda}(z) - G_{\lambda}(n + \hh)}{z - n - \hh}\bigg\}.
\end{split}
\end{align}
As the function on the left of (\ref{gauss23}) is entire and a compact subset of $\C$ intersects $\Z + \h$ in finitely many points, we
find that the limit on the right of (\ref{gauss23}) converges uniformly on compact subsets of $\C$.

For positive integers $N$ and all real $u$ let
\begin{equation*}\label{gauss24}
S_{\lambda, N}(u) = \sum_{n=-N-1}^N (-1)^n G_{\lambda}(n + \hh - u).
\end{equation*}
Then (\ref{poisson1}) implies that
\begin{equation}\label{gauss25}
\lim_{N\rightarrow \infty} S_{\lambda, N}(u) = \lambda^{-\h} \theta_1\bigl(u - \hh, i\lambda^{-1}\bigr).
\end{equation}
We use the identity (\ref{gauss1}) with $w = n + \h$ and sum over integers $n$ satisfying $-N-1 \le n \le N$.  We find that
\begin{align}\label{gauss26}
\begin{split}
 \sum_{n=-N-1}^N (-1)^{n+1}&\bigg\{\frac{G_{\lambda}(z) - G_{\lambda}(n + \hh)}{z - n - \hh}\bigg\}\\
 	&= 2\pi \lambda^{\g} \int_0^{\infty} \int_0^{\infty} e^{-2\pi\lambda tu} G_{\lambda}(z - t)S_{\lambda, N}(u)\ \du\ \dt \\
        &\qquad - 2\pi \lambda^{\g} \int_{-\infty}^0 \int_{-\infty}^0 e^{-2\pi\lambda tu} G_{\lambda}(z - t)S_{\lambda, N}(u)\ \du\ \dt. 
\end{split}
\end{align}
Next we let $N\rightarrow \infty$ on both sides of (\ref{gauss26}).  The limit on the left hand side is determined by (\ref{gauss23}).
On the right hand side we use (\ref{poisson4}) and the dominated convergence theorem to move the limit inside the integral.
Then we use (\ref{gauss25}).  In this way we arrive at the identity
\begin{align}\label{gauss27}
\begin{split}
\frac{\pi}{\cos \pi z}&\Big\{G_{\lambda}(z) - K_{\lambda}(z)\Big\}\\
	&= 2\pi \lambda \int_0^{\infty} \int_0^{\infty} e^{-2\pi\lambda tu} G_{\lambda}(z - t)\theta_1\bigl(u - \hh, i\lambda^{-1}\bigr)\ \du\ \dt \\
        &\qquad - 2\pi \lambda \int_{-\infty}^0 \int_{-\infty}^0 e^{-2\pi\lambda tu} G_{\lambda}(z - t)\theta_1\bigl(u - \hh, i\lambda^{-1}\bigr)\ \du\ \dt.
\end{split}
\end{align}

If $0 < t$ then, using (\ref{theta4}) and the fact that $u\mapsto \theta_1\bigl(u, i\lambda^{-1}\bigr)$ is an even function, we get
\begin{align}\label{gauss28}
\begin{split}
\int_0^{\infty} e^{-2\pi\lambda tu}&\theta_1\bigl(u - \hh, i\lambda^{-1}\bigr)\ \du\\
	&= \sum_{m=0}^{\infty} \int_0^1 e^{-2\pi\lambda t(u + m)} \theta_1\bigl(u +m - \hh, i\lambda^{-1}\bigr)\ \du\\
	&= \sum_{m=0}^{\infty} (-1)^m e^{-2\pi\lambda tm} \int_0^1e^{-2\pi\lambda tu} \theta_1\bigl(u - \hh, i\lambda^{-1}\bigr)\ \du\\
	&= \big\{e^{\pi\lambda t} + e^{-\pi\lambda t}\big\}^{-1} \int_{-\h}^{\h}e^{-2\pi\lambda tu} \theta_1\bigl(u, i\lambda^{-1}\bigr)\ \du\\	
	&= \big\{2\cosh \pi\lambda t\big\}^{-1} \int_{-\h}^{\h} \cosh 2\pi\lambda tu \ \theta_1\bigl(u, i\lambda^{-1}\bigr)\ \du.	
\end{split}
\end{align}
If $t < 0$ then in a similar manner we find that
\begin{align}\label{gauss29}
\begin{split}
\int_{-\infty}^0 e^{-2\pi\lambda tu}&\theta_1\bigl(u - \hh, i\lambda^{-1}\bigr)\ \du\\
	&= -\big\{2\cosh \pi\lambda t\big\}^{-1} \int_{-\h}^{\h} \cosh 2\pi\lambda tu \ \theta_1\bigl(u, i\lambda^{-1}\bigr)\ \du.	
\end{split}
\end{align}
The identity (\ref{gauss21}) follows now by combining (\ref{gauss27}), (\ref{gauss28}) and (\ref{gauss29}).
\end{proof}

Because $z\mapsto L_{\lambda}(z)$ interpolates both the value of $G_{\lambda}(z)$ and the value of its 
derivative $G_{\lambda}^{\prime}(z)$ at each point of the coset $\Z + \hh$, the entire function
\begin{equation*}\label{gauss34}
z\mapsto G_{\lambda}(z) - L_{\lambda}(z)
\end{equation*}
has a zero of multiplicity at least $2$ at each point of 
$\Z + \h$.  It follows that
\begin{equation*}\label{gauss35}
z\mapsto \biggl(\frac{\pi}{\cos \pi z}\biggr)^2\Big\{G_{\lambda}(z) - L_{\lambda}(z)\Big\}
\end{equation*}
is an entire function.  In a similar manner, we find that
\begin{equation*}\label{gauss36}
z\mapsto \biggl(\frac{\pi}{\sin \pi z}\biggr)^2\Big\{M_{\lambda}(z) - G_{\lambda}(z)\Big\}
\end{equation*}
is an entire function.

\begin{lemma}\label{lem5}
For all complex $z$ we have
\begin{align}\label{gauss37}
\begin{split}
&\biggl(\frac{\pi}{\cos \pi z}\biggr)^2\Big\{G_{\lambda}(z) - L_{\lambda}(z)\Big\}\\
       &= 2\pi^2\lambda^2 \int_{-\infty}^{\infty} \frac{t G_{\lambda}(z - t)}{\sinh \pi\lambda t}
	\int_{-\h}^{\h} e^{-2\pi\lambda tu}\big\{\theta_3\bigl(u,i\lambda^{-1}\bigr) - \theta_3\bigl(\hh,i\lambda^{-1}\bigr)\big\}\ \du\ \dt,
\end{split}
\end{align}
and
\begin{align}\label{gauss38}
\begin{split}
&\biggl(\frac{\pi}{\sin \pi z}\biggr)^2\Big\{M_{\lambda}(z) - G_{\lambda}(z)\Big\}\\
	&= 2\pi^2\lambda^2 \int_{-\infty}^{\infty} \frac{t G_{\lambda}(z - t)}{\sinh \pi\lambda t}
	\int_{-\h}^{\h} e^{-2\pi\lambda tu}\big\{\theta_2\bigl(\hh,i\lambda^{-1}\bigr) - \theta_2\bigl(u,i\lambda^{-1}\bigr)\big\}\ \du\ \dt.
\end{split}
\end{align}
\end{lemma}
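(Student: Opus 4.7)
The plan is to mirror the proof of Lemma~\ref{lem4} exactly, replacing Lemma~\ref{lem1} with Lemma~\ref{lem2} and replacing the first-order partial-fraction expansion of $\pi/\cos \pi z$ with the second-order expansions
\begin{equation*}
\Bigl(\frac{\pi}{\cos \pi z}\Bigr)^{2} = \sum_{n\in\Z}\frac{1}{(z-n-\hh)^{2}},\qquad
\Bigl(\frac{\pi}{\sin \pi z}\Bigr)^{2} = \sum_{n\in\Z}\frac{1}{(z-n)^{2}},
\end{equation*}
which both converge uniformly on compact subsets of $\C$ away from the poles. I will treat the identity \eqref{gauss37} for $L_\lambda$ in detail; the argument for \eqref{gauss38} is formally identical with the integers replacing the half-integers, together with an overall sign change reflecting the reversal of $G_\lambda - L_\lambda$ to $M_\lambda - G_\lambda$.

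First, from the defining series \eqref{intro3} and the $\sec^2$ partial-fraction expansion above one obtains
\begin{equation*}
\Bigl(\frac{\pi}{\cos \pi z}\Bigr)^{2}\bigl\{G_{\lambda}(z)-L_{\lambda}(z)\bigr\}
=\lim_{N\to\infty}\sum_{n=-N-1}^{N}\biggl[\frac{G_{\lambda}(z)-G_{\lambda}(n+\hh)}{(z-n-\hh)^{2}}-\frac{G_{\lambda}^{\prime}(n+\hh)}{z-n-\hh}\biggr],
\end{equation*}
with uniform convergence on compact sets (the left side is entire, any compact set meets $\Z + \h$ in finitely many points, and the Gaussian decay ensures the tails are negligible). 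Each summand is precisely the left-hand side of Lemma~\ref{lem2} with $w=n+\hh$, so I substitute the double-integral representation from \eqref{gauss10} and interchange the finite sum with the integrals. This produces, in the integrand, the partial sum $T_{\lambda,N}(u)=\sum_{n=-N-1}^{N}\bigl\{G_{\lambda}(n+\hh)-G_{\lambda}(n+\hh-u)\bigr\}$.

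Next I pass to the limit inside the integral: the bound \eqref{poisson5} from Lemma~\ref{lem3} shows that $|T_{\lambda,N}(u)|\ll_\lambda \min\{1,|u|\}$ uniformly in $N$, and combined with the rapid decay of $t\,e^{-2\pi\lambda tu}G_{\lambda}(z-t)$ this furnishes a dominating function for the dominated convergence theorem. By \eqref{poisson2} together with \eqref{theta5} (rewritten as $\theta_{2}(u,\tau)=\theta_{3}(u-\hh,\tau)$) and the evenness of $\theta_{3}$, $T_{\lambda,N}(u)$ converges pointwise to $\lambda^{-\h}\bigl\{\theta_{3}(\hh,i\lambda^{-1})-\theta_{3}(u-\hh,i\lambda^{-1})\bigr\}$, absorbing the factor $\lambda^{\f}$ from Lemma~\ref{lem2} into a prefactor $(2\pi)^{2}\lambda^{2}$.

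Finally I reduce the inner $u$-integrals to the interval $[-\hh,\hh]$ appearing in \eqref{gauss37}. For $t>0$, substitute $v=u-\hh$ and split $[-\hh,\infty)$ into the union of the intervals $[-\hh,\hh]+m$ for $m\ge 0$; the period-$1$ invariance of $\theta_{3}$ extracts a geometric factor $\sum_{m\ge 0}e^{-2\pi\lambda tm}=(1-e^{-2\pi\lambda t})^{-1}$, which together with the $e^{-\pi\lambda t}$ arising from the shift collapses to $(2\sinh\pi\lambda t)^{-1}$. The analogous computation for $t<0$ produces the same factor with opposite sign, and this opposite sign cancels against the minus sign separating the two regions in Lemma~\ref{lem2}, yielding the single integral over all real $t$ with the factor $t/\sinh\pi\lambda t$ and integrand $\theta_{3}(u,i\lambda^{-1})-\theta_{3}(\hh,i\lambda^{-1})$ (after replacing $v$ by $u$ and noting $\theta_{3}$ is even). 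The identity \eqref{gauss38} follows from the same three steps with $(\pi/\sin\pi z)^{2}$, $w=n\in\Z$, the Poisson identity \eqref{poisson3}, and $\theta_{3}(v,\tau)=\theta_{2}(v+\hh,\tau)$ used to convert $\theta_{3}(0)-\theta_{3}(u)$ into $\theta_{2}(\hh)-\theta_{2}(u)$ after the interval shift.

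The principal obstacle is bookkeeping: correctly tracking the signs from the two half-axis integrals in Lemma~\ref{lem2}, the overall sign flip between the $L_\lambda$ and $M_\lambda$ cases, and the interplay between the shifts $u\mapsto u\pm\hh$ and the identity $\theta_{2}(v+\hh,\tau)=\theta_{3}(v,\tau)$, so that the terminal integrand has precisely the form stated. The analytic content of the argument (dominated convergence, geometric summation, partial fractions) is entirely parallel to that of Lemma~\ref{lem4}.
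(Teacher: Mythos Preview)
Your proposal is correct and follows essentially the same route as the paper's proof: partial-fraction expansion, summation of the identity from Lemma~\ref{lem2} over $w=n+\hh$, dominated convergence via the bound \eqref{poisson5}, and geometric summation of the periodic $\theta$-factor to produce the $\sinh$ in the denominator. The only cosmetic difference is that the paper keeps the limit of $T_{\lambda,N}(u)$ in the form $\lambda^{-\h}\{\theta_2(0,i\lambda^{-1})-\theta_2(u,i\lambda^{-1})\}$ and applies \eqref{theta5} during the geometric-summation step, whereas you convert to $\theta_3$ immediately; the computations are otherwise identical, and the paper likewise leaves \eqref{gauss38} to the reader.
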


\begin{proof}
In order to establish (\ref{gauss37}) we use the partial fraction expansion
\begin{equation}\label{gauss39}
\lim_{N\rightarrow \infty} \sum_{n=-N-1}^N \frac{1}{\bigl(z - n - \h\bigr)^2} = \biggl(\frac{\pi}{\cos \pi z}\biggr)^2,
\end{equation}
which converges uniformly on compact subsets of $\C\setminus\big\{\Z + \h\big\}$.  Then it follows from (\ref{intro3}) and (\ref{gauss39}) that
\begin{align}\label{gauss40}
\begin{split}
\biggl(\frac{\pi}{\cos \pi z}&\biggr)^2\Big\{G_{\lambda}(z) - L_{\lambda}(z)\Big\}\\
	&= \lim_{N\rightarrow \infty} \sum_{n=-N-1}^N \bigg\{\frac{G_{\lambda}(z)}{(z - n - \h)^2} - \frac{G_{\lambda}(n + \h)}{(z - n - \h)^2} 
		- \frac{G_{\lambda}^{\prime}(n + \h)}{z - n - \h}\bigg\}.
\end{split}
\end{align}
As in the proof of Lemma \ref{lem4}, the limit on the right of (\ref{gauss40}) converges uniformly on compact subsets of $\C$.

For positive integers $N$ and all real $u$ let
\begin{equation*}\label{gauss41}
T_{\lambda, N}(u) = \sum_{n=-N-1}^N \big\{G_{\lambda}(n + \hh) - G_{\lambda}(n + \hh -u)\big\}.
\end{equation*}
From (\ref{poisson2}) we conclude that
\begin{equation}\label{gauss42}
\lim_{N\rightarrow \infty} T_{\lambda, N}(u) 
	= \lambda^{-\h}\big\{\theta_2\bigl(0, i\lambda^{-1}\bigr) - \theta_2\bigl(u, i\lambda^{-1}\bigr)\big\}.
\end{equation}
We use the identity (\ref{gauss10}) with $w = n+ \h$ and sum over integers $n$ satisfying $-N-1 \le n \le N$.  We get
\begin{align}\label{gauss43}
\begin{split}
\sum_{n=-N-1}^N&\bigg\{\frac{G_{\lambda}(z)}{(z - n - \h)^2} - \frac{G_{\lambda}(n + \h)}{(z - n - \h)^2} 
		- \frac{G_{\lambda}^{\prime}(n + \h)}{z - n - \h}\bigg\}\\
	&= (2\pi)^2 \lambda^{5/2} \int_{-\infty}^0 \int_{-\infty}^0 t e^{-2\pi\lambda tu} G_{\lambda}(z - t)T_{\lambda, N}(u)\ \du\ \dt \\
        &\qquad - (2\pi)^2 \lambda^{5/2} \int_0^{\infty} \int_0^{\infty} t e^{-2\pi\lambda tu} G_{\lambda}(z - t)T_{\lambda, N}(u)\ \du\ \dt. 
\end{split}
\end{align}
As in the proof of Lemma \ref{lem4}, we let $N\rightarrow \infty$ on both sides of (\ref{gauss43}).  The limit on the left hand side
is determined by (\ref{gauss40}).  On the right hand side we use (\ref{poisson5}), the dominated convergence theorem and
(\ref{gauss42}).  In this way we obtain the identity
\begin{align}\label{gauss44}
\begin{split}
&\biggl(\frac{\pi}{\cos \pi z}\biggr)^2\Big\{G_{\lambda}(z) - L_{\lambda}(z)\Big\}\\
	&= (2\pi \lambda)^2 \int_{-\infty}^0 \int_{-\infty}^0 t e^{-2\pi\lambda tu} G_{\lambda}(z - t)\big\{\theta_2\bigl(0, i\lambda^{-1}\bigr) 
		- \theta_2\bigl(u, i\lambda^{-1}\bigr)\big\}\ \du\ \dt \\
       &\quad - (2\pi \lambda)^2 \int_0^{\infty} \int_0^{\infty} t e^{-2\pi\lambda tu} G_{\lambda}(z - t)\big\{\theta_2\bigl(0, i\lambda^{-1}\bigr) 
        		- \theta_2\bigl(u, i\lambda^{-1}\bigr)\big\}\ \du\ \dt. 
\end{split}
\end{align}

If $0 < t$ then, using that $v \mapsto \theta_2(v,\tau)$ has period $1$ and (\ref{theta5}), we get
\begin{align}\label{gauss45}
\begin{split}
\int_0^{\infty}&e^{-2\pi\lambda tu}\big\{\theta_2\bigl(0, i\lambda^{-1}\bigr) - \theta_2\bigl(u, i\lambda^{-1}\bigr)\big\}\ \du\\
	&= \sum_{m=0}^{\infty} \int_0^1 e^{-2\pi\lambda t(u + m)} \big\{\theta_2\bigl(0, i\lambda^{-1}\bigr) 
		- \theta_2\bigl(u+m, i\lambda^{-1}\bigr)\big\}\ \du\\
	&= \big\{1 - e^{-2\pi\lambda t}\big\}^{-1} \int_0^1 e^{-2\pi\lambda tu} \big\{\theta_2\bigl(0, i\lambda^{-1}\bigr) 
		- \theta_2\bigl(u, i\lambda^{-1}\bigr)\big\}\ \du\\
	&= \big\{2\sinh \pi\lambda t\big\}^{-1} \int_{-\h}^{\h} e^{-2\pi\lambda tu} \big\{\theta_3\bigl(\hh, i\lambda^{-1}\bigr) 
		- \theta_3\bigl(u, i\lambda^{-1}\bigr)\big\}\ \du.
\end{split}
\end{align}
If $t < 0$ then in a similar manner we find that
\begin{align}\label{gauss46}
\begin{split}
\int_{-\infty}^0&e^{-2\pi\lambda tu}\big\{\theta_2\bigl(0, i\lambda^{-1}\bigr) - \theta_2\bigl(u, i\lambda^{-1}\bigr)\big\}\ \du\\
	&= -\big\{2\sinh \pi\lambda t\big\}^{-1} \int_{-\h}^{\h} e^{-2\pi\lambda tu} \big\{\theta_3\bigl(\hh, i\lambda^{-1}\bigr) 
		- \theta_3\bigl(u, i\lambda^{-1}\bigr)\big\}\ \du.
\end{split}
\end{align}
The identity (\ref{gauss37}) follows now by combining (\ref{gauss44}), (\ref{gauss45}) and (\ref{gauss46}).

The proof of (\ref{gauss38}) proceeds along the same lines using (\ref{poisson3}) and (\ref{poisson6}). We leave the details 
to the reader.
\end{proof}

\begin{corollary}\label{cor9}
For all real values of $x$ we have
\begin{equation}\label{gauss47}
0 < \biggl(\frac{\pi}{\cos \pi x}\biggr)^2\Big\{G_{\lambda}(x) - L_{\lambda}(x)\Big\},
\end{equation}
and
\begin{equation}\label{gauss48}
0 < \biggl(\frac{\pi}{\sin \pi x}\biggr)^2\Big\{M_{\lambda}(x) - G_{\lambda}(x)\Big\}.
\end{equation}
In particular, the inequality {\rm (\ref{intro5})} holds for all real $x$.
\end{corollary}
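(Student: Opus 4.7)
The plan is to extract both inequalities directly from the integral representations in Lemma \ref{lem5} by checking the sign of each factor once $z = x$ is real, and then deduce (\ref{intro5}) as an immediate consequence. On the right of (\ref{gauss37}) and (\ref{gauss38}) for real $x$, the Gaussian $G_\lambda(x - t)$ is strictly positive, and the even kernel $t \mapsto t/\sinh(\pi\lambda t)$ is strictly positive on $\R \setminus \{0\}$ and extends continuously at $t = 0$ to the positive value $1/(\pi\lambda)$. Hence the overall sign is controlled entirely by the inner $u$-integral.

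The next step is to show that on $[-\hh, \hh]$ one has $\theta_3(u, i\lambda^{-1}) \ge \theta_3(\hh, i\lambda^{-1})$ and $\theta_2(\hh, i\lambda^{-1}) \ge \theta_2(u, i\lambda^{-1})$, with equality only at $u = \pm\hh$. The cleanest route is via the Jacobi triple product (cf.\ \cite{KC}): with $q = e^{-\pi/\lambda} \in (0,1)$,
\begin{equation*}
\theta_3(u, i\lambda^{-1}) = \prod_{n \ge 1}(1 - q^{2n}) \prod_{n \ge 1} \bigl(1 + 2 q^{2n-1} \cos(2\pi u) + q^{4n-2}\bigr),
\end{equation*}
and the analogous product for $\theta_2$ has each factor $\bigl(1 + 2 q^{2n-1} \cos(2\pi u) + q^{4n-2}\bigr)$ replaced by $\bigl(1 - 2 q^{2n-1} \cos(2\pi u) + q^{4n-2}\bigr)$. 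Every factor is strictly positive; each $\theta_3$-factor is monotonically increasing in $\cos(2\pi u)$, and each $\theta_2$-factor is monotonically decreasing. Since $\cos(2\pi u)$ attains its maximum $1$ at $u = 0$ and its minimum $-1$ at $u = \pm\hh$ on $[-\hh,\hh]$, the sign claims follow, with strict inequality on the open interval $(-\hh, \hh)$.

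With these sign facts in hand, the final ingredient is the evenness of both theta brackets in $u$: writing $e^{-2\pi\lambda tu} = \cosh(2\pi\lambda tu) - \sinh(2\pi\lambda tu)$, the $\sinh$ piece integrates to zero by antisymmetry, while the remaining $\cosh$ piece integrates a strictly positive function against a nonnegative bracket which is not identically zero on $(-\hh, \hh)$. Hence the inner $u$-integral is strictly positive for every real $t$, which yields strict positivity of the right-hand sides of (\ref{gauss37}) and (\ref{gauss38}) for real $z = x$; this proves (\ref{gauss47}) and (\ref{gauss48}).

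Finally, (\ref{intro5}) is immediate. Off the coset $\Z + \hh$, the factor $(\cos \pi x/\pi)^2 > 0$, so multiplying (\ref{gauss47}) by it gives $G_\lambda(x) > L_\lambda(x)$; at points of $\Z + \hh$ the interpolation conditions defining $L_\lambda$ (matching both $G_\lambda$ and $G_\lambda'$ there) already force $G_\lambda(x) = L_\lambda(x)$. The same argument with $\sin \pi x$ and $\Z$ yields $G_\lambda(x) \le M_\lambda(x)$. The main obstacle in the plan is really just the monotonicity of the theta factors in $\cos(2\pi u)$; the Jacobi product collapses this to a one-factor observation, whereas a purely Fourier-based argument via (\ref{poisson2}) and (\ref{poisson3}) would require more delicate work.
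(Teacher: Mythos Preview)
Your proof is correct and follows essentially the same route as the paper: both arguments read the sign directly off the integral representations in Lemma~\ref{lem5}, reducing the question to the positivity of the inner $u$-integral, which in turn follows from the fact that $u\mapsto\theta_3(u,i\lambda^{-1})$ attains its minimum on $[-\hh,\hh]$ at $u=\pm\hh$ and $u\mapsto\theta_2(u,i\lambda^{-1})$ attains its maximum there. The paper simply asserts these extremal properties of $\theta_2$ and $\theta_3$; you supply a clean justification via the Jacobi triple product, which is a welcome addition. One minor remark: your $\cosh/\sinh$ decomposition of $e^{-2\pi\lambda tu}$ is unnecessary, since the integrand in the inner $u$-integral is already a product of a strictly positive exponential and a nonnegative (not identically zero) bracket, so positivity of the integral is immediate without any parity argument.
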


\begin{proof}
For real $u$ the periodic function $u\mapsto \theta_3\bigl(u, i\lambda^{-1}\bigr)$ takes its maximum value at
$u = 0$ and its minimum values at $u = \h$.  Therefore the function
\begin{equation*}\label{gauss49}
t\mapsto \int_{-\h}^{\h} e^{-2\pi\lambda tu}\big\{\theta_3\bigl(u,i\lambda^{-1}\bigr) - \theta_3\bigl(\hh,i\lambda^{-1}\bigr)\big\}\ \du,
\end{equation*}
which appears in the integrand on the right of (\ref{gauss37}), is positive for all real values of $t$.  This plainly verifies the
inequality (\ref{gauss47}). 

In a similar manner using (\ref{theta5}), the periodic function $u\mapsto \theta_2\bigl(u, i\lambda^{-1}\bigr)$ takes its
maximum value at $u = \h$ and its minimum value at $u = 0$.  Hence the function
\begin{equation*}\label{gauss50}
t\mapsto \int_{-\h}^{\h} e^{-2\pi\lambda tu}\big\{\theta_2\bigl(\hh,i\lambda^{-1}\bigr) - \theta_2\bigl(u,i\lambda^{-1}\bigr)\big\}\ \du,
\end{equation*}
which appears in the integrand on the right of (\ref{gauss38}), is positive for all real values of $t$.  This establishes the
inequality (\ref{gauss48}).
\end{proof}

\section{Proof of Theorem \ref{thm1}}

We recall that $v\mapsto \theta_1(v, i\lambda)$ is an entire function of the complex variable $v$.
The product formula for this theta function (see \cite[Chapter V, Theorem 6]{KC}) provides the representation
\begin{align}\label{theta6}
\begin{split}
\theta_1\bigl(v,i\lambda^{-1}\bigr)
	&= 2 e^{-\pi(4\lambda)^{-1}} \prod_{l=1}^{\infty}\bigl(1 - e^{-2\pi\lambda^{-1}l}\bigr) \cos \pi v \\
        &\qquad\qquad \prod_{m=1}^{\infty} \bigl(1 + e^{-2\pi\lambda^{-1}m + 2\pi iv}\bigr) 
            \prod_{n=1}^{\infty} \bigl(1 + e^{-2\pi\lambda^{-1}n - 2\pi iv}\bigr).
\end{split}
\end{align}
It follows from (\ref{theta6}) that the only real zeros of $\theta_1\bigl(v, i\lambda^{-1}\bigr)$ are zeros 
of $\cos \pi v$.  That is, the only real zeros are simple zeros at the points of $\Z + \h$.
Because $\theta_1\bigl(0, i\lambda^{-1}\bigr) > 0$, it follows that $\theta_1\bigl(u, i\lambda^{-1}\bigr) > 0$ for all 
real values of $u$ in the open interval $-\h < u < \h$.  This implies that the integral on the left of (\ref{intro10}) is 
positive.   Also, the integral on the right of (\ref{gauss21}) is positive for real values of $z = x$.  Alternatively, we have  
\begin{equation*}\label{theta7}
\frac{\pi}{\cos \pi x}\Big\{G_{\lambda}(x) - K_{\lambda}(x)\Big\} > 0
\end{equation*}
for all real $x$, and therefore
\begin{equation}\label{theta8}
\sgn\Big\{G_{\lambda}(x) - K_{\lambda}(x)\Big\} = \sgn(\cos \pi x)
\end{equation}
for all real $x$.

From the series expansion 
(\ref{theta1}) we find that
\begin{equation}\label{pf00}
\lambda^{-\h} \int_{-\h}^{\h} \theta_1(u, i\lambda^{-1})\ \du
	= \frac{1}{\pi} \sum_{n=-\infty}^{\infty} \frac{(-1)^n}{n+\hh} \tG_{\lambda}(n + \hh),
\end{equation}
where
\begin{equation*}\label{pf0}
\tG_{\lambda}(t) = \lambda^{-\h} e^{-\pi\lambda^{-1} t^2} = \int_{-\infty}^{\infty} G_{\lambda}(x) e(-xt)\ \dx
\end{equation*}
is the Fourier transform of $G_{\lambda}(x)$ on $\R$.
Now let $F(z)$ be an entire function of exponential type at most $\pi$.  Without loss of generality we may assume that
\begin{equation*}\label{pf1}
\int_{-\infty}^{\infty} \bigl|G_{\lambda}(x) - F(x)\bigr|\ \dx < \infty.
\end{equation*}
It follows that $F$ is integrable on $\R$ and therefore the Fourier transform
\begin{equation*}\label{pf2}
t\mapsto \tF(t) = \int_{-\infty}^{\infty} F(x) e(-tx)\ \dx
\end{equation*}
is continuous on $\R$, and supported on $\bigl[-\h, \h\bigr]$.  The function
\begin{equation*}\label{pf3}
x\mapsto \sgn(\cos \pi x)
\end{equation*}
is periodic on $\R$ with period $2$, and has the Fourier expansion
\begin{equation}\label{pf4}
\sgn(\cos \pi x) = \lim_{N\rightarrow \infty} \frac{1}{\pi} \sum_{n=-N-1}^{N} \frac{(-1)^n}{n+\hh} e\bigl((n+\hh)x\bigr).
\end{equation}
Moreover, the partial sums on the right of (\ref{pf4}) are uniformly bounded, and therefore
\begin{align}\label{pf5}
\begin{split}
\int_{-\infty}^{\infty} &\sgn(\cos \pi x) \big\{G_{\lambda}(x) - F(x)\big\}\ \dx\\
	&= \lim_{N\rightarrow\infty} \frac{1}{\pi} \sum_{n = -N-1}^{N} 
		\frac{(-1)^n}{n+\hh} \int_{-\infty}^{\infty} \big\{G_{\lambda}(x) - F(x)\big\} e\bigl((n+\hh)x\bigr)\ \dx\\
	&= \lim_{N\rightarrow\infty} \frac{1}{\pi} \sum_{n = -N-1}^{N}  \frac{(-1)^n}{n+\hh} \big\{\tG_{\lambda}\bigl(-n-\hh\bigr) - \tF\bigl(-n-\hh\bigr)\big\}\\	
	&= \frac{1}{\pi} \sum_{n = -\infty}^{\infty} \frac{(-1)^n}{n+\hh} \tG_{\lambda}\bigl(n+\hh\bigr).
\end{split}
\end{align}
It is clear from (\ref{pf00}) and (\ref{pf5}) that
\begin{equation}\label{pf6}
\lambda^{-\h} \int_{-\h}^{\h} \theta_1\bigl(u, i\lambda^{-1}\bigr)\ \du \le \int_{-\infty}^{\infty} \bigl|G_{\lambda}(x) - F(x)\bigr|\ \dx,
\end{equation}
and this verifies (\ref{intro10}).  Then (\ref{theta8}), (\ref{pf00}) and (\ref{pf5}) lead to the identity
\begin{align}\label{pf7}
\begin{split}
\lambda^{-\h} \int_{-\h}^{\h} \theta_1\bigl(u, i\lambda^{-1}\bigr)\ \du 
	&= \int_{-\infty}^{\infty} \sgn(\cos \pi x) \big\{G_{\lambda}(x) - K_{\lambda}(x)\big\}\ \dx\\
	&= \int_{-\infty}^{\infty} \bigl|G_{\lambda}(x) - K_{\lambda}(x)\bigr|\ \dx.
\end{split}
\end{align}
Plainly (\ref{pf7}) shows that there is equality in the inequality (\ref{intro10}) in case $F(z) = K_{\lambda}(z)$.

Finally, we {\it assume} that $F(z)$ is an entire function of exponential type at most $\pi$ for which there is equality in the 
inequality (\ref{intro10}).  Then (\ref{pf00}) and (\ref{pf5}) imply that
\begin{equation}\label{pf8}
\int_{-\infty}^{\infty} \sgn(\cos \pi x) \big\{G_{\lambda}(x) - F(x)\big\}\ \dx = \int_{-\infty}^{\infty} \bigl|G_{\lambda}(x) - F(x)\bigr|\ \dx.
\end{equation}
As $x\mapsto G_{\lambda}(x) - F(x)$ is continuous, we conclude from (\ref{pf8}) that
\begin{equation*}\label{pf9}
\sgn(\cos \pi x) \big\{G_{\lambda}(x) - F(x)\big\} = \bigl|G_{\lambda}(x) - F(x)\bigr|
\end{equation*}
for all real $x$.  This implies that
\begin{equation*}\label{pf10}
G_{\lambda}\bigl(n + \hh\bigr) = K_{\lambda}\bigl(n + \hh\bigr) = F\bigl(n + \hh\bigr)
\end{equation*}
for each integer $n$.  Therefore
\begin{equation}\label{pf11}
z\mapsto K_{\lambda}(z) - F(z)
\end{equation}
is an entire function of exponential type at most $\pi$ and takes the value zero at each point of the set $\Z + \hh$.  From
basic interpolation theorems for entire functions of exponential type (see \cite[Vol. II, p. 275]{Z}), we conclude that the
entire function (\ref{pf11}) is identically zero. This completes the proof of Theorem 1.

\section{Proofs of Theorems \ref{thm2} and \ref{thm3}}

Let $F(z)$ be an entire function of exponential type at most $2\pi$ such that
\begin{equation}\label{pf19}
F(x) \le G_{\lambda}(x)
\end{equation}
for all real $x$.  Clearly we may assume that $x\mapsto F(x)$ is integrable on $\R$, for if not then (\ref{intro21}) is trivial.
Using \cite[Lemma 4]{GV}, (\ref{poisson2}) and (\ref{pf19}), we find that
\begin{align}\label{pf20}
\begin{split}
\int_{-\infty}^{\infty} F(x)\ \dx &= \lim_{N\rightarrow\infty} \sum_{n=-N}^N\Bigl(1 - \frac{|n|}{N+1}\Bigr) F(n + v)\\
     &\le \lim_{N\rightarrow\infty} \sum_{n=-N}^N\Bigl(1 - \frac{|n|}{N+1}\Bigr) G_{\lambda}(n + v)\\
     &= \lambda^{-\h}\theta_2\bigr(\hh - v, i\lambda^{-1}\bigr)
\end{split}
\end{align}
for all real $v$.  We have already noted that $v\mapsto \theta_2\bigl(\hh - v, i\lambda^{-1}\bigr)$ takes its minimum value
at $v = \h$.  Hence (\ref{pf20}) implies that
\begin{equation*}\label{pf21}
\int_{-\infty}^{\infty} F(x)\ \dx \le \lambda^{-\h}\theta_2\bigr(0, i\lambda^{-1}\bigr),
\end{equation*}
and this proves (\ref{intro21}). 

In Corollary \ref{cor9} we proved that $F(z) = L_{\lambda}(z)$ satisfies the inequality (\ref{pf19}) for all real $x$.
In this special case there is equality in the inequality (\ref{pf20}) when $v = \hh$.  Thus we have
\begin{equation}\label{pf22}
\int_{-\infty}^{\infty} L_{\lambda}(x)\ \dx = \lambda^{-\h}\theta_2\bigr(0, i\lambda^{-1}\bigr).
\end{equation}

Now {\it assume} that $F(z)$ is an entire function of exponential type at most $2\pi$ that satisfies (\ref{pf19}) for all
real $x$, and assume that there is equality in the inequality (\ref{pf20}).  It follows that $v= \hh$ and
\begin{equation*}\label{pf23}
F(n + \hh) = G_{\lambda}(n + \hh)
\end{equation*}
for all integers $n$.  Then from (\ref{pf19}) we also get
\begin{equation*}\label{pf24}
F^{\prime}(n + \hh) = G_{\lambda}^{\prime}(n + \hh)
\end{equation*}
for all integers $n$.  Of course this shows that the entire function
\begin{equation}\label{pf25}
z\mapsto F(z) - L_{\lambda}(z)
\end{equation}
has exponential type at most $2\pi$, vanishes at each point of $\Z + \hh$, and its derivative also vanishes at each
point of $\Z + \hh$.  By a second application of \cite[Lemma 4]{GV} we conclude that the entire function (\ref{pf25})
is identically zero.  This proves Theorem 2, and Theorem 3 can be proved by the same sort of argument.

\section{Proof of Theorem \ref{thm4}}

The partial sums for the series (\ref{theta1}) defining $t\mapsto \theta_1(t, i\lambda)$ converge absolutely and uniformly for $t$ in $\R$.  Therefore
we find that
\begin{align}\label{pf30}
\begin{split}
\int_{-\h}^{\h} \theta_1(t, i\lambda) e(tz)\ \dt &= \sum_{n=-\infty}^{\infty} e^{-\pi\lambda (n+\h)^2} \int_{-\h}^{\h} e\bigl(t(z+n+\hh)\bigr)\ \dt\\
     &=  \Bigl(\frac{\cos \pi z}{\pi}\Bigr) \bigg\{\sum_{n=-\infty}^{\infty} (-1)^n \frac{e^{-\pi\lambda (n+\h)^2}}{\bigl(z+n+\h\bigr)}\bigg\}\\
     &= K_{\lambda}(z).
\end{split}
\end{align}
Then (\ref{intro32}) follows from (\ref{pf30}) by Fourier inversion.  In a similar manner we find that
\begin{align}\label{pf33}
\begin{split}
\int_{-1}^1 \bigl(1 - |t|\bigr)&\theta_1(t, i\lambda) e(tz)\ \dt\\
	&= \sum_{n=-\infty}^{\infty} e^{-\pi\lambda(n+\h)^2} \int_{-1}^1\bigl(1 - |t|\bigr) e\bigl(t(z+n+\hh)\bigr)\ \dt\\
	&= \sum_{-\infty}^{\infty} e^{-\pi\lambda(n+\h)^2} \left(\frac{\sin \pi(z+n+\hh)}{\pi(z+n+\hh)}\right)^2\\
	&= \left(\frac{\cos \pi z}{\pi}\right)^2 \sum_{m=-\infty}^{\infty} \frac{G_{\lambda}\bigl(m+\hh\bigr)}{\bigl(z-m-\h\bigr)^2},
\end{split}
\end{align}
and
\begin{align}\label{pf34}
\begin{split}
-(2\pi)^{-1} \lambda \int_{-1}^1&\sgn(t)\frac{\partial\theta_1}{\partial t}(t, i\lambda) e(tz)\ \dt\\
	&= -(2\pi)^{-1} \lambda \sum_{-\infty}^{\infty} e^{-\pi\lambda(n+\h)^2}\big\{2\pi i(n+\hh)\big\} \int_{-1}^1 \sgn(t) e(tz)\ \dt\\
	&= \lambda \sum_{n=-\infty}^{\infty} e^{-\pi\lambda(n+\h)^2} \left\{\frac{2\pi(n+\hh)}{z+n+\hh}\right\}\left(\frac{\sin \pi(z+n+\hh)}{\pi}\right)^2\\
	&= \left(\frac{\cos \pi z}{\pi}\right)^2 \sum_{n=-\infty}^{\infty} \frac{G_{\lambda}^{\prime}\bigl(n+\hh\bigr)}{\bigl(z-n-\hh\bigr)}.
\end{split}
\end{align}
Now (\ref{intro33}) follows from (\ref{pf33}) and (\ref{pf34}) by Fourier inversion.  The proof of (\ref{intro34}) is essentially the same.  We note that
(\ref{intro33}) and (\ref{intro34}) are both special cases of a general formula for Fourier transforms given in \cite[Theorem 9]{V}.

\section{Proof of Theorem \ref{thm5} and \ref{thm6}}

Let $\delta = 2N+2$, and let $p$ be a trigonometric polynomial of degree at most $N$. The Fourier expansions \eqref{theta1} and \eqref{pf4} imply
\begin{align}\label{p5eq1}
\begin{split}
\int_{\R/\Z} \bigl|\theta_3\bigl(x,i\lambda^{-1}\bigr) - p(x)\bigr|\, \dx &\ge \Big|\int_{\R/\Z} \{\theta_3(x,i\lambda^{-1})-p(x)\}\,\sgn(\cos\pi\delta x)\, \dx\Big| \\
& =\Big|\int_{\R/\Z} \theta_3\bigl(x,i\lambda^{-1}\bigr)\,\sgn(\cos\pi\delta x)\,\dx\Big|\\
&= \Big|\frac{2}{\pi} \sum_{n=-\infty}^\infty \frac{(-1)^n}{2n+1}\, e^{-\pi\lambda^{-1}(n+\frac{1}{2})^2\delta^2}\Big|\\
&=\int_{-\frac{1}{2}}^{\frac12} \theta_1\bigl(u,i\lambda^{-1}\delta^2\bigr)\,\du,
\end{split}
\end{align}
which proves \eqref{intro40}. The representation of $\theta_3\bigl(v,i\lambda^{-1}\bigr)$ in \eqref{poisson3} and the second representation of $k_{\lambda,N}$ in \eqref{intro36} imply
\begin{align*}
\theta_3\bigl(x,i\lambda^{-1}\bigr)-k_{\lambda,N}(x) = \lambda^{\h} \sum_{m=-\infty}^{\infty}\left\{ e^{-\pi \lambda (x+m)^2} -  K_{\delta^{-2}\lambda}\bigl(\delta (x+m)\bigr)\right\}
\end{align*}
for all $x\in\R/\Z$. The identity $G_{\delta^{-2}\lambda}(\delta x) = G_\lambda(x)$ and \eqref{theta8} give
\begin{align}\label{p5eq3}
\text{sgn}\left\{\theta_3\bigl(x,i\lambda^{-1}\bigr)-k_{\lambda,N}(x)\right\} =\text{sgn}(\cos\pi\delta x),
\end{align}
hence for $p=k_{\lambda,N}$ we have equality in \eqref{p5eq1}. To prove uniqueness, note that in order to have equality in \eqref{p5eq1} we must have
\begin{align}\label{p5eq4}
p\left(\frac{n+\frac12 }{\delta}\right) = \theta_3\left(\frac{n+\frac12}{\delta}, i\lambda^{-1}\right)
\end{align}
for $n=0,1,2,...,2N+1$. Since the degree of $p(x)$ is at most $N$, such polynomial exists and is unique \cite[Vol II, page 1]{Z}, and we showed in \eqref{p5eq3} that $k_{\lambda,N}$ already satisfies \eqref{p5eq4}. This completes the proof of Theorem \ref{thm5}

We now prove the minorant part of Theorem \ref{thm6}.  Let $\delta:= N+1$.  Let $q$ be a trigonometric polynomial of degree at most $N$ satisfying $q(x) \leq \theta_3\bigl(x,i\lambda^{-1}\bigr)$ at each $x\in \R\backslash\Z$. We use the fact that the degree of $q$ is at most $N$ (and $\delta=N+1$), and we apply \eqref{poisson2} to obtain
\begin{align}\label{th6eq1}
\int_{\R/\Z} q(x)\,\dx =\delta^{-1} \sum_{n=0}^N q\left(\frac{n+\frac12}{\delta}\right)\le \delta^{-1} \sum_{n=0}^N\theta_3\left(\frac{n+\frac12}{\delta},i\lambda^{-1}\right)= \theta_2\bigl(0,i\lambda^{-1}\delta^{2}\bigr),
\end{align}
which proves \eqref{intro45}. Equations \eqref{poisson3} and \eqref{intro41} give
\begin{align}\label{th6eq2}
\theta_3\bigl(x,i\lambda^{-1}\bigr)-l_{\lambda,N}(x) =\lambda^\h \sum_{m=-\infty}^\infty \left\{ e^{-\pi\lambda(x+m)^2}-L_{\delta^{-2}\lambda}\bigl(\delta(x+m)\bigr) \right\}\ge 0
\end{align}
for all $x\in\R/\Z$, which proves the first inequality in \eqref{intro43}. Since $L_\lambda$ interpolates the values of $G_\lambda$ at $\Z+\frac12$, we have equality in \eqref{th6eq2}, and hence
\begin{align}\label{th6eq3}
l_{\lambda,N}\left(\frac{n+\frac12}{\delta} \right) =\theta_3\left(\frac{n+\frac12}{\delta},i\lambda^{-1}\right)
\end{align}
for $n=0,1,...,N$. Moreover, \eqref{intro41} and \eqref{intro21} imply
\begin{align}\label{th6eq4}
\int_{\R/\Z} l_{\lambda,N}(x)\,\dx = \lambda^\h\int_{-\infty}^\infty L_{\delta^{-2}\lambda}(\delta x)\,\dx = \theta_2\bigl(0,i\lambda^{-1}\delta^2\bigr),
\end{align}
hence equality occurs in \eqref{intro45} for $q = l_{\lambda,N}$. It remains to show uniqueness. If equality occurs in \eqref{th6eq1} then
\begin{align*}
q\left(\frac{n+\frac12}{\delta} \right)=\theta_3\left(\frac{n+\frac12}{\delta},i\lambda^{-1}\right) = l_{\lambda,N}\left(\frac{n+\frac12}{\delta} \right)
\end{align*}
for $n=0,...,N$. Since $q$ and $l_{\lambda,N}$ both minorize $\theta_3\bigl(x,i\lambda^{-1}\bigr)$, their derivatives at the points $\delta^{-1}\bigl(n+\hh\bigr)$ where $n=0,...,N$ have to be equal. Hence $q$ satisfies $2N+2$ conditions which determine a unique trigonometric polynomial of degree at most $N$ \cite[Vol. II, p. 23]{Z}, and therefore $q=l_{\lambda,N}$. The proof for the extremal majorizing trigonometric polynomial proceeds on analogous lines with interpolation points at $\delta^{-1} n$ for $n=0,...,N$.

\section*{Part II: Distribution Approach}
\section{The Paley-Wiener Theorem for Distributions}
Let $\D(\R) \subseteq \sS(\R) \subseteq \E(\R)$ be the usual spaces of $C^{\infty}$ functions on $\R$ as defined
in the work of L. Schwartz \cite{Sch}, and let $\E^{\prime}(\R) \subseteq \sS^{\prime}(\R) \subseteq \D^{\prime}(\R)$
be the corresponding dual spaces of distributions.  Our notation and terminology for distributions follows that of \cite{Gf},
and precise definitions for these spaces are given in \cite[Section 2.3]{Gf}.
We write $\p(x)$ for a generic element in the space $\sS(\R)$ of Schwartz functions.  If
$T$ in $\sS^{\prime}(\R)$ is a tempered distribution we write $T(\p)$ for the value of $T$ at $\p$.  Then the
Fourier transform of $T$ is the tempered distribution $\tT$ defined by
\begin{equation*}\label{dist1}
\tT(\p) = T(\tp),
\end{equation*}
where
\begin{equation*}\label{dist2}
\tp(y) = \int_{-\infty}^{\infty} \p(x) e(-yx)\ \dx
\end{equation*}
is the Fourier transform of the function $\p$. Functions $g: \R \to \R$ in any $L^p$ class or with polynomial growth can be regarded as elements of $\sS^{\prime}(\R)$ and we will usually make the identification
\begin{equation*}
g(\p)  = \int_{-\infty}^{\infty} g(x) \, \varphi(x)\, \dx
\end{equation*}
for all $\p$ in $\sS(\R)$.

We recall the following form of the Paley-Wiener theorem for distributions, which is obtained by combining
Theorem 1.7.5 and Theorem 1.7.7 in \cite{Hor}.

\begin{theorem}[Paley-Wiener for distributions]\label{thm1dist}
Let $\delta > 0$, and let $U$ be a tempered distribution in $\sS^{\prime}(\R)$ with Fourier transform $\tU$ supported in the compact 
interval $[-\delta, \delta]$.  Then $\tU$ belongs to $\E^{\prime}(\R)$, and
\begin{equation*}\label{dist3}
z\mapsto F(z) = \tU_{\xi}\big(e(\xi z)\big)
\end{equation*}
defines an entire function of the complex variable $z = x + iy$ such that
\begin{equation}\label{dist4}
\bigl|F(z)\bigr| \ll_B \bigl(1 + |z|\bigr)^B \exp\{2\pi\delta|y|\}
\end{equation}
for some number $B \ge 0$ and all $z$ in $\C$.  Moreover, the entire function $F(z)$ satisfies the identity
\begin{equation*}\label{dist5}
U(\p) = \int_{-\infty}^{\infty} F(x)\p(x)\ \dx
\end{equation*}
for all $\p$ in $\sS(\R)$.

Conversely, suppose that $F(z)$ is an entire function of the complex variable $z$ that satisfies the inequality {\rm (\ref{dist4})} 
for some numbers $B \ge 0$ and $\delta > 0$.  Then there exists a tempered distribution $V$ in $\sS^{\prime}(\R)$ such 
that $\widehat{V}$ belongs to $\E^{\prime}(\R)$, $\tV$ is supported on the compact interval $[-\delta, \delta]$,
\begin{equation*}\label{dist6}
F(z) = \tV_{\xi}\big(e(\xi z)\big),
\end{equation*}
and
\begin{equation*}\label{dist7}
V(\p) = \int_{-\infty}^{\infty} F(x)\p(x)\ \dx
\end{equation*}
for all $\p$ in $\sS(\R)$.
\end{theorem}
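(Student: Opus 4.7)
The plan is to prove the two directions separately, following the classical Paley--Wiener strategy adapted to tempered distributions. The forward direction constructs the entire function from the distribution, while the converse recovers a distribution from an entire function of controlled growth.

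\textbf{Forward direction.} First I would verify that $\tU$ belongs to $\E'(\R)$. Since $\tU$ is a tempered distribution with compact support, it extends uniquely to a continuous linear functional on $\E(\R)$ by setting $\tU(\psi) := \tU(\chi\psi)$ for any cutoff $\chi \in \D(\R)$ equal to $1$ on a neighborhood of $[-\delta,\delta]$. Because $\xi \mapsto e(\xi z)$ lies in $\E(\R)$ for every fixed $z \in \C$, the formula $F(z) = \tU_\xi(e(\xi z))$ defines a function on $\C$. To see $F$ is entire, I would note that the difference quotient $h^{-1}(e(\xi(z+h)) - e(\xi z))$ converges to $2\pi i\xi\,e(\xi z)$ in the topology of $\E(\R)$ as $h \to 0$, so continuity of $\tU$ on $\E(\R)$ gives $F'(z) = \tU_\xi(2\pi i\xi\,e(\xi z))$. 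For the growth bound, a compactly supported distribution is of finite order $B$, so there is a seminorm estimate $|\tU(\psi)| \le C\sum_{k \le B} \sup_{|\xi| \le \delta+1}|\psi^{(k)}(\xi)|$. Applying this to $\psi(\xi) = \chi(\xi)\,e(\xi z)$ and noting $|\partial_\xi^k e(\xi z)| \le (2\pi|z|)^k e^{2\pi\delta|y|}$ on the enlarged support yields $|F(z)| \le C'(1+|z|)^B e^{2\pi\delta|y|}$. The identity $U(\p) = \int F(x)\p(x)\,\dx$ is a consequence of Fourier inversion: one expresses the right-hand side as $\tU_\xi(\tp(-\xi))$ by pulling $\tU_\xi$ inside the (absolutely convergent) integral using Riemann-sum approximation, then applies $\tU(\psi) = U(\widehat\psi)$ together with $\widehat{\tp(-\cdot)}(x) = \p(x)$.

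\textbf{Converse direction.} Given $F$ entire with $|F(z)| \le C(1+|z|)^B e^{2\pi\delta|y|}$, the restriction $F|_\R$ has polynomial growth, so defines $V \in \sS'(\R)$ by $V(\p) := \int F(x)\p(x)\,\dx$. The remaining task is to show $\supp\tV \subseteq [-\delta,\delta]$; the conclusion $\tV \in \E'(\R)$ will then be automatic, and the representation $F(z) = \tV_\xi(e(\xi z))$ follows by applying the forward direction to $V$. Take $\psi \in \D(\R)$ with $\supp\psi \cap [-\delta,\delta] = \emptyset$ and split $\psi = \psi_+ + \psi_-$ with $\supp\psi_+ \subset [\delta_1,\delta_2]$ for some $\delta_1 > \delta$, and $\supp\psi_-$ analogous on the negative side. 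We need $\tV(\psi) = \int F(x)\widehat\psi(x)\,\dx = 0$. Repeated integration by parts in $\xi$ gives, for every $N \ge 0$,
\[
|\widehat{\psi_+}(x - iT)| \le C_N (1 + |x - iT|)^{-N} e^{-2\pi\delta_1 T} \qquad (T > 0),
\]
so combined with the growth of $F$,
\[
|F(x - iT)\,\widehat{\psi_+}(x - iT)| \le C_N(1 + |x| + T)^{B - N} e^{-2\pi(\delta_1 - \delta)T}.
\]
For $N$ large this is integrable in $x$ and vanishes as $T \to \infty$. A rectangular-contour argument (the vertical sides at $x = \pm R$ vanishing as $R \to \infty$ by the same bounds) yields $\int F(x)\widehat{\psi_+}(x)\,\dx = 0$, and the analogous shift into the upper half-plane disposes of $\psi_-$.

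\textbf{Main obstacle.} The delicate step is the contour shift in the converse: one must uniformly balance the polynomial--exponential growth of $F$ against the super-polynomial decay of $\widehat{\psi_\pm}$ in the appropriate half-plane, and in particular verify that the vertical sides of the shifted rectangle truly vanish in the limit $R \to \infty$. Once the integration-by-parts estimates are in place this reduces to elementary bookkeeping. In the forward direction the only subtlety is justifying the interchange of $\tU$ with the Riemann-sum approximation of $\int e(\xi x)\p(x)\,\dx$; this is cleanest to handle first for $\tU$ a smooth compactly supported function and then to extend by regularization (convolving $\tU$ with a mollifier) in the topology of $\E'(\R)$.
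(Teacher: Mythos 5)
The paper does not prove this theorem; it only cites it as a combination of Theorems 1.7.5 and 1.7.7 in H\"{o}rmander's \emph{Linear Partial Differential Operators}, so there is no proof in the paper to compare against. Your outline is the standard Paley--Wiener--Schwartz argument, and your converse direction is correct.

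There is, however, a real error in your forward direction. You claim $|\partial_\xi^k e(\xi z)| \le (2\pi|z|)^k e^{2\pi\delta|y|}$ ``on the enlarged support,'' but $\partial_\xi^k e(\xi z) = (2\pi i z)^k e(\xi z)$ and $|e(\xi z)| = e^{-2\pi\xi y}$, so for $\xi$ in the enlarged set, say $|\xi| \le \delta+1$, this quantity can be as large as $(2\pi|z|)^k e^{2\pi(\delta+1)|y|}$. With a single fixed cutoff you therefore only obtain $|F(z)| \ll (1+|z|)^B e^{2\pi(\delta+\epsilon)|y|}$ for the fixed $\epsilon$ determined by the width of the cutoff, not the sharp bound (\ref{dist4}). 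The standard repair is to use a $y$-dependent family of cutoffs $\chi_\epsilon$ supported in $[-\delta-\epsilon,\delta+\epsilon]$ with $|\chi_\epsilon^{(k)}| \ll_k \epsilon^{-k}$ and then take $\epsilon = (1+|y|)^{-1}$: the spurious exponential factor $e^{2\pi\epsilon|y|}$ becomes a bounded constant, while the $\epsilon^{-k}$ factors are absorbed into an extra polynomial, giving $(1+|z|)^{B'} e^{2\pi\delta|y|}$ with a larger $B'$. For the ways the theorem is invoked later in the paper the weaker bound would in fact suffice, since one only needs that $F$ has exponential type at most $2\pi\delta$ in the sense of the introduction (a bound for every $\epsilon > 0$), but the theorem as stated, and as the precise hypothesis of the converse, requires the sharp exponent. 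Finally, you single out the contour shift in the converse as the main obstacle, but your own estimates show the vertical sides vanish easily; the sharp exponent in the forward direction is the more delicate point and the one where your argument as written breaks down.
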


Here we write $\tU_{\xi}$ to indicate that the distribution $\tU$ is acting on the function $\xi \mapsto \big(e(\xi z)\big)$.

\section{Optimal Integration}\label{OptimalIntegration}

Throughout Part II of the paper we let $\lambda$ be a parameter on the interval $I \subset \R$ and consider a family of real valued even functions $x \mapsto G(\lambda, x)$ satisfying the following properties, for each $\lambda \in I$,\\
\begin{itemize}
 \item[(i)] The function $x \mapsto G(\lambda, x)$ is continuous on $\R$ and differentiable on $\R/\{0\}$.\\

\item[(ii)] There exist constants $C = C(\lambda)>0$ and $\epsilon = \epsilon(\lambda) >0$ such that, for all $x \in \R$ and $t \in \R$, 
\begin{equation*}
 |G(\lambda, x)| \leq \frac{C}{(1 + |x|)^{1 + \epsilon}}   \ \ \ \ \textrm{and} \ \ \ \ \ |\widehat{G}(\lambda, t)| \leq \frac{C}{(1 + |t|)^{1 + \epsilon}}.
\end{equation*}
\item[(iii)] The Fourier transform $t \mapsto \widehat{G}(\lambda,t)$ is non-negative and radially non-increasing.\\

\end{itemize}

Depending on the type of problem one wants to treat (minorant, majorant or best approximation) we will require one additional hypothesis about the family $G(\lambda, x)$, for each $\lambda \in I$, \\

\begin{itemize}
\item[(iv)] (Minorant) There is a unique extremal minorant $z\mapsto L(\lambda,z)$ of exponential type $2\pi$ that interpolates the values of $G(\lambda,x)$ at $\Z + \h$.\\

 \item[(v)] (Majorant) There is a unique extremal majorant $z\mapsto M(\lambda,z)$ of exponential type $2\pi$ that interpolates the values of $G(\lambda,x)$ at $\Z$.\\

 \item[(vi)] (Best Approximation) There is a unique best approximation $z\mapsto K(\lambda,z)$ of exponential type $\pi$ that interpolates the values of $G(\lambda,x)$ at $\Z + \h$ and satisfies
\begin{equation*}
 \sgn (\cos \pi x) \,\{G(\lambda,x) - K(\lambda, x)\} \geq 0.
\end{equation*}
\end{itemize}
\vspace{0.3cm}

We will call $\{x\mapsto G(\lambda, x)\}_{\lambda\in I}$ a \textsl{minorant family} if it satisfies properties (i)-(iv) above. The notions of \textsl{majorant family} and \textsl{best approximation family} are defined analogously using (v) and (vi) instead of (iv), respectively.

Observe that hypotheses (iv), (v) and (vi) do not need to coexist. Indeed, each problem can be treated independently. Examples of families of functions satisfying the conditions (i) - (iv) listed above that we have in mind for potential applications in this paper are given in the table below (note that in these cases $\lambda \in (0,\infty)$), together with the minimal values of the corresponding integrals.
\vspace{0.4cm}

\begin{center}

\begin{tabular}{|Sc|Sc|Sc|Sc|}
 \hline 

$G(\lambda,x) $ & Minorant  & Majorant  & Best Approximation \\

\hline

$e^{-\lambda \pi x^2}$ & $\scriptstyle\sum_{n\neq0} (-1)^n \, \lambda^{-\hh}\, e^{-\tfrac{\pi n^2}{\lambda}}$& $\scriptstyle\sum_{n\neq0} \, \lambda^{-\hh}\, e^{-\tfrac{\pi n^2}{\lambda}}$& $\scriptstyle\sum_{n=-\infty}^{\infty} \frac{(-1)^n \lambda^{-\hh}\, e^{-\tfrac{\pi \bigl(n+\hh\bigr)^2}{\lambda}}}{\pi \bigl(n+\hh\bigr)}$ \\

\hline

$e^{-\lambda |x|}$ & $\tfrac{2}{\lambda} - \csch\bigl(\tfrac{\lambda}{2}\bigr)$ & $\coth\bigl(\tfrac{\lambda}{2}\bigr) - \tfrac{2}{\lambda} $&$\tfrac{2}{\lambda} - \tfrac{2}{\lambda} \sech \bigl(\tfrac{\lambda}{2}\bigr)$ \\

\hline

$\frac{2\lambda}{\lambda^2 + 4 \pi^2 x^2}$ & $\frac{2}{e^{\lambda} + 1}$& $\frac{2}{e^{\lambda} - 1}$ & $\scriptstyle\sum_{n=-\infty}^{\infty} (-1)^n\, \frac{e^{-\lambda \bigl| n + \hh\bigr|}}{\pi\bigl( n + \tfrac{1}{2} \bigr)}$\\

\hline

\end{tabular}

\end{center}

\begin{center}
Table 1: Solution to the Beurling-Selberg problem for some functions.
\end{center}
\vspace{0.5cm}

Our goal here is to be able to integrate the parameter $\lambda$ with respect to a suitable non-negative Borel measure on $I$ and obtain the solution to a different extremal problem. One could first guess that the class of suitable measures $\nu$ on $I$ would consist of those measures for which the function
\begin{equation*}
 g(x) = \int_I G(\lambda, x)\, \dnu
\end{equation*}
is well defined, and that this would be the function to be approximated. Such method was essentially carried on in \cite{CV2}, \cite{CV3} and \cite{GV}. It turns out that this condition is usually restrictive and in order to find the optimal minimal conditions to be imposed on the measure $\nu$ one must look at things on the Fourier transform side. 

Let us illustrate what this condition should be on the minorant case. Define the difference function
\begin{equation*}
  D(\lambda, x) := G(\lambda, x) - L(\lambda, x) \geq 0.
\end{equation*}
The minimal integral corresponds to 
\begin{equation*}
 \int_{-\infty}^{\infty} \{ G(\lambda, x) - L(\lambda, x)\} \, \dx = \widehat{D}(\lambda, 0).
\end{equation*}
If we succeed in our attempt to integrate the parameter $\lambda$, we will end up solving an extremal problem for which the value of the minimal integral is given by (and thus we want to impose the finiteness)
\begin{equation}\label{MinIntFinal}
 \int_{I} \int_{-\infty}^{\infty} \{ G(\lambda, x) - L(\lambda, x)\} \, \dx\,\dnu =  \int_{I} \widehat{D}(\lambda, 0) \,\dnu < \infty.
\end{equation}
We will show that, when looked at the right angle, this is also a sufficient condition. \\

Suppose $\nu$ is a non-negative Borel measure on $I$ satisfying (\ref{MinIntFinal}). Since
\begin{equation*}
 |\widehat{D}(\lambda, t)| \leq \widehat{D}(\lambda, 0)
\end{equation*}
for all $t \in \R$, we observe that the function
\begin{equation*}
 t \mapsto \int_{I} \widehat{D}(\lambda, t) \, \dnu
\end{equation*}
is well defined. In particular, from the classical Paley-Wiener theorem, the Fourier transform $t \mapsto \widehat{L}(\lambda,t)$ is supported on $[-1,1]$ and therefore
\begin{equation*}
\int_{I} \widehat{D}(\lambda, t) \, \dnu = \int_{I} \widehat{G}(\lambda, t) \, \dnu
\end{equation*}
for $|t| \geq 1$. We are now in position to state the three main results of Part II of the paper. In the following theorems we write
\begin{equation*}\label{dist20}
[\alpha, \beta]^c = (-\infty, \alpha) \cup (\beta, \infty)
\end{equation*}
for the complement in $\R$ of a closed interval $[\alpha, \beta]$.

\begin{theorem}[Distribution Method - Minorant]\label{MinDis}
Let $\{x \mapsto G(\lambda,x)\}_{\lambda\in I}$ be a minorant family and $\nu$ be a non-negative Borel measure on $I$ satisfying 
\begin{equation*}
 \int_{I} \int_{-\infty}^{\infty} \{ G(\lambda, x) - L(\lambda, x)\} \, \dx\,\dnu < \infty.
\end{equation*}
Let $g: \R \to \R$ be a function on $\mc{S}'(\R)$ that is continuous on $\R/\{0\}$, differentiable on $\R/\{0\}$, and such that 
\begin{equation*}
 \widehat{g}(\varphi) = \int_{-\infty}^{\infty}\left\{\int_{I} \widehat{G}(\lambda, t) \, \dnu \right\} \varphi (t) \, \dt
\end{equation*}
for all Schwartz functions $\varphi$ supported on $[-1,1]^c$. Then there exists a unique extremal minorant $l(z)$ of exponential type $2\pi$ for $g(x)$. The function $l(x)$ interpolates the values of $g(x)$ at $\Z + \h$ and satisfies
\begin{equation*}
\int_{-\infty}^{\infty} \{g(x) -l(x) \} \,\dx =  \int_{I} \int_{-\infty}^{\infty} \{ G(\lambda, x) - L(\lambda, x)\} \, \dx\,\dnu.
\end{equation*}
\end{theorem}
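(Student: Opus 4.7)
The plan is to construct the extremal minorant explicitly as $l(z) := g(z) - d(z)$, where $d$ is the non-negative defect function obtained by integrating $G(\lambda,\cdot) - L(\lambda,\cdot)$ against $\nu$, and then to invoke the distributional Paley-Wiener theorem (Theorem \ref{thm1dist}) to identify $l$ with an entire function of exponential type $2\pi$. Extremality and uniqueness then follow from a weighted sampling identity in the spirit of \cite[Lemma 4]{GV}, exactly as in the proof of Theorem \ref{thm2}.

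First I would set $D(\lambda,x) := G(\lambda,x) - L(\lambda,x) \geq 0$ and $d(x) := \int_I D(\lambda,x)\,\dnu$. Since $D \geq 0$, Tonelli's theorem and the hypothesis on $\nu$ give $d \in L^1(\R)$ with $\int_\R d(x)\,\dx = \int_I \widehat{D}(\lambda,0)\,\dnu < \infty$, and a second application of Fubini yields $\widehat{d}(t) = \int_I \widehat{D}(\lambda,t)\,\dnu$ for every $t$. Since each $L(\lambda,\cdot) = G(\lambda,\cdot) - D(\lambda,\cdot)$ is integrable of exponential type $2\pi$, classical Paley-Wiener gives $\widehat{L}(\lambda,t) = 0$ for $|t| \geq 1$, and therefore $\widehat{d}(t) = \int_I \widehat{G}(\lambda,t)\,\dnu$ on $[-1,1]^c$.

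Now define $l := g - d$ as a tempered distribution. For every Schwartz function $\varphi$ supported in $[-1,1]^c$, the hypothesis on $\widehat{g}$ combined with the previous step gives
\begin{align*}
\widehat{l}(\varphi) = \widehat{g}(\varphi) - \widehat{d}(\varphi) = \int_{-\infty}^{\infty}\!\left\{\int_I \widehat{G}(\lambda,t)\,\dnu\right\}\varphi(t)\,\dt - \int_{-\infty}^{\infty}\widehat{d}(t)\varphi(t)\,\dt = 0,
\end{align*}
so $\widehat{l}$ is supported in $[-1,1]$, and Theorem \ref{thm1dist} identifies $l$ with (the restriction to $\R$ of) an entire function of exponential type $2\pi$. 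Since $D(\lambda,n+\hh) = 0$ for every $\lambda \in I$ and $n \in \Z$, the pointwise integral defining $d$ vanishes at $n+\hh$, and the continuous identification of $g - l$ with $d$ on $\R \setminus \{0\}$ yields $l(n+\hh) = g(n+\hh)$. The non-negativity $d \geq 0$ gives $l(x) \leq g(x)$ for all $x \neq 0$, and
\begin{align*}
\int_{-\infty}^{\infty}\{g(x) - l(x)\}\,\dx = \int_{-\infty}^{\infty} d(x)\,\dx = \int_I \int_{-\infty}^{\infty}\{G(\lambda,x) - L(\lambda,x)\}\,\dx\,\dnu.
\end{align*}

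For extremality, let $F$ be any real entire function of exponential type at most $2\pi$ with $F \le g$ on $\R$ and $\int_\R \{g - F\}\,\dx < \infty$; the representation $l - F = (g - F) - d$ shows $l - F \in L^1(\R)$. Applying \cite[Lemma 4]{GV} to $l - F$ with sampling offset $v = \hh$ gives
\begin{align*}
\int_{-\infty}^{\infty}(l - F)(x)\,\dx = \lim_{N\to\infty}\sum_{n=-N}^{N}\Bigl(1 - \tfrac{|n|}{N+1}\Bigr)\bigl\{g(n+\hh) - F(n+\hh)\bigr\} \ge 0,
\end{align*}
establishing the minimization. If equality occurs, non-negativity of each summand forces $F(n+\hh) = g(n+\hh) = l(n+\hh)$, and the condition $F \leq g$ together with differentiability of $g$ at these points yields $F'(n+\hh) = g'(n+\hh) = l'(n+\hh)$; thus $F - l$ is an $L^1$ entire function of type $2\pi$ with double zeros on $\Z + \hh$, and a final appeal to \cite[Lemma 4]{GV} as in the uniqueness step of Theorem \ref{thm2} forces $F \equiv l$. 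The main obstacle is verifying that the distributional Fourier transform of $l$ is genuinely supported in $[-1,1]$: this is precisely the role of the hypothesis on $\widehat{g}$ off $[-1,1]$, engineered exactly to cancel the high-frequency tail of $\widehat{d}$ and enable the application of Paley-Wiener; the remaining arguments are then formal consequences of the structure established for the family $\{G(\lambda,\cdot)\}$.
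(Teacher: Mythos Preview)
Your construction of $l$ via the distributional Paley-Wiener theorem, and your extremality and uniqueness arguments, are correct and essentially identical to the paper's. The gap is in the interpolation step.

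You compute $d(n+\hh) = \int_I D(\lambda, n+\hh)\,\dnu = 0$ from the pointwise definition of $d$, and then claim that ``the continuous identification of $g-l$ with $d$ on $\R\setminus\{0\}$'' gives $l(n+\hh)=g(n+\hh)$. But the Paley-Wiener argument only yields $d = g-l$ \emph{almost everywhere}, not pointwise, and there is no reason for $d$ itself to be continuous; the paper explicitly notes that $d(x)$ may be $+\infty$ at some points. Since each $D(\lambda,\cdot)\ge 0$ is continuous, Fatou's lemma makes $d$ lower semicontinuous, and combining this with the a.e.\ identity gives only $d(x)\le (g-l)(x)$ for $x\ne 0$. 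At $x=n+\hh$ this says $(g-l)(n+\hh)\ge 0$, which you already knew; it does not force $(g-l)(n+\hh)=0$.

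The paper fills this gap with a genuinely different and more delicate argument. It periodizes $d_1:=g-l$ to $p(x)=\sum_n d_1(n+x)\in L^1(\R/\Z)$, convolves with the Fej\'er kernel $F_N$, and writes
\[
\widehat{d}(0)=p*F_N(\tfrac12)+\int_I\Bigg\{\sum_{\substack{k=-N\\k\ne 0}}^{N}(-1)^{k+1}\Bigl(1-\tfrac{|k|}{N}\Bigr)\widehat{G}(\lambda,k)\Bigg\}\dnu.
\]
Condition (iii) on the family---that $\widehat{G}(\lambda,\cdot)$ is non-negative and radially non-increasing---makes the integrand non-negative, so Fatou applies; together with the Poisson identity $\widehat{D}(\lambda,0)=-\sum_{k\ne 0}(-1)^k\widehat{G}(\lambda,k)$ this forces $\liminf_N p*F_N(\tfrac12)=0$, and a second Fatou then gives $\sum_n d_1(n+\hh)=0$. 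Notice that your argument never invokes condition (iii), which is a warning sign: that hypothesis is doing real work precisely at this step.
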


\begin{theorem}[Distribution Method - Majorant]\label{MajDis}
Let $\{x \mapsto G(\lambda,x)\}_{\lambda\in I}$ be a majorant family and $\nu$ be a non-negative Borel measure on $I$ satisfying  
\begin{equation*}
 \int_{I} \int_{-\infty}^{\infty} \{ M(\lambda, x) - G(\lambda, x)\} \, \dx\,\dnu  < \infty.
\end{equation*}
Let $g: \R \to \R$ be a function on $\mc{S}'(\R)$ that is continuous on $\R$, differentiable on $\R/\{0\}$, and such that 
\begin{equation*}
 \widehat{g}(\varphi) = \int_{-\infty}^{\infty}\left\{\int_{I} \widehat{G}(\lambda, t)\, \dnu \right\} \varphi (t) \, \dt
\end{equation*}
for all Schwartz functions $\varphi$ supported on $[-1,1]^c$. Then there exists a unique extremal majorant $m(z)$ of exponential type $2\pi$ for $g(x)$. The function $m(x)$ interpolates the values of $g(x)$ at $\Z$ and satisfies
\begin{equation*}
\int_{-\infty}^{\infty} \{m(x) -g(x) \} \,\dx =  \int_{I} \int_{-\infty}^{\infty} \{ M(\lambda, x) - G(\lambda, x)\} \, \dx\,\dnu.
\end{equation*}
\end{theorem}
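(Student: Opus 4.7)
The plan is to construct the candidate majorant $m$ in the form $m = g + D$, where $D$ is obtained by integrating the one-parameter family of discrepancies $M(\lambda, \cdot) - G(\lambda, \cdot)$ against $\nu$. First I would define
$$D(x) := \int_I \bigl\{M(\lambda,x) - G(\lambda,x)\bigr\}\, \dnu.$$
Since each integrand is non-negative and continuous in $x$, Tonelli's theorem and the finiteness hypothesis yield $D \in L^1(\R)$ with $D \geq 0$. Using Fubini once more, and noting that each $\widehat{M}(\lambda, \cdot)$ is supported on $[-1,1]$ by the classical Paley--Wiener theorem (since $M(\lambda, \cdot) \in L^1(\R)$ has exponential type $2\pi$), one finds
$$\widehat{D}(t) = -\int_I \widehat{G}(\lambda, t)\, \dnu \qquad \text{for } |t| > 1.$$
Together with the distributional identity for $\widehat{g}$ on $[-1,1]^c$ in the hypothesis, this shows that the tempered distribution $m := g + D$ has Fourier transform $\widehat{g} + \widehat{D}$ supported in $[-1, 1]$. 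By Theorem~\ref{thm1dist}, $m$ is represented by an entire function of exponential type $2\pi$ of polynomial growth on $\R$.

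Next I would verify the three asserted properties of $m$. The inequality $m(x) \geq g(x)$ for every real $x$ is immediate from $D \geq 0$ a.e.\ together with the continuity of $m - g$ on $\R$ (using that $g$ is continuous by hypothesis and $m$ is entire). For the interpolation at the integers, the key observation is that $D(\lambda, n) = 0$ for every $\lambda \in I$ and $n \in \Z$ by property~(v); combining this with a dominated-convergence argument for the continuous representative of $D$ at the points of $\Z$ (which exploits the fast local vanishing of each $D(\lambda, \cdot)$ at $n$ forced by the majorization), one concludes $(m - g)(n) = 0$, i.e.\ $m(n) = g(n)$. Finally, the integral identity
$$\int_\R \{m(x) - g(x)\}\, \dx = \int_I \int_\R \{M(\lambda, x) - G(\lambda, x)\}\, \dx\, \dnu$$
follows directly from Tonelli.

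For extremality, let $\widetilde{m}$ be any real entire function of exponential type at most $2\pi$ with $\widetilde{m} \geq g$ on $\R$ and $\int_\R \{\widetilde{m} - g\}\, \dx$ finite. Then both $\widetilde{m} - g$ and $m - g$ are non-negative elements of $L^1(\R)$, so $h := \widetilde{m} - m \in L^1(\R)$ has exponential type at most $2\pi$. Applying \cite[Lemma~4]{GV} at $v = 0$,
$$\int_\R h(x)\, \dx = \lim_{N \to \infty} \sum_{n = -N}^N \left(1 - \frac{|n|}{N+1}\right) h(n),$$
and since $h(n) = \widetilde{m}(n) - g(n) \geq 0$ (using the interpolation $m(n) = g(n)$), the right-hand side is non-negative, giving $\int_\R \{\widetilde{m} - g\}\, \dx \geq \int_\R \{m - g\}\, \dx$. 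For uniqueness I would follow the template of the proof of Theorem~\ref{thm3}: equality forces $h(n) = 0$ for every $n \in \Z$, and the differentiability of $g$ on $\R \setminus \{0\}$ together with the majorizations $\widetilde{m}, m \geq g$ forces $h'(n) = 0$ for every $n \in \Z \setminus \{0\}$. Setting $k(z) := z h(z) / \sin^2(\pi z)$, one checks that $k$ is entire of exponential type $0$; the bound $|h(x)| \leq C$ on $\R$ (valid because any $L^1$ function of exponential type $2\pi$ is bounded, via Fourier inversion) yields $|k(x)| = O(|x|)$, so $k$ is a polynomial of degree at most $1$. The $L^1$-integrability of $h$ then forces both coefficients to vanish, giving $h \equiv 0$.

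I expect the main obstacle to be the interpolation step for $m$ at the integers: passing from the a.e.\ identity $m - g = D$ to the pointwise vanishing $(m - g)(n) = 0$ requires care, since one must verify that integration against $\nu$ commutes with the pointwise vanishing of the continuous representatives, which in turn requires a dominating $\nu$-integrable function for the local averages of $D(\lambda, \cdot)$ near $n$. The remaining steps are formal consequences of the distributional Paley--Wiener theorem and Tonelli, or direct analogues of the arguments already recorded in the proofs of Theorems~\ref{thm2} and~\ref{thm3}.
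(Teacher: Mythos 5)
Your construction follows the same path as the paper's template for Theorem~\ref{MinDis} (which the paper says to adapt): form the integrated non-negative difference $D=\int_I\{M(\lambda,\cdot)-G(\lambda,\cdot)\}\,\dnu$, identify $\widehat D$ on $[-1,1]^c$, pass to the distribution $g+D$, invoke Theorem~\ref{thm1dist} to obtain the entire majorant, and use \cite[Lemma~4]{GV} for extremality. The sign computation $\widehat D(t)=-\int_I\widehat G(\lambda,t)\,\dnu$ for $|t|>1$, the Tonelli steps, and the minimality argument are all correct and match the paper.

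The genuine gap is the interpolation step, which you correctly flag as the main obstacle but then resolve only by invoking an unspecified ``dominated-convergence argument'' exploiting ``fast local vanishing of $D(\lambda,\cdot)$ at $n$.'' That hint points in the wrong direction: what is needed is not the local order of vanishing but a $\nu$-integrable \emph{pointwise} dominating function, and one cannot obtain this from local behaviour alone. The paper's proof is considerably more involved at this point: it applies Poisson summation to $D(\lambda,\cdot)$, introduces the periodization $p(x)=\sum_n d_1(n+x)$ of the continuous representative $d_1=m-g$, convolves with the Fej\'er kernel, and uses two applications of Fatou's lemma. (For the majorant case your idea can in fact be completed, because Poisson summation and hypothesis (iii) yield the uniform bound $D(\lambda,x)\le\sum_m D(\lambda,x+m)=\sum_k\widehat D(\lambda,k)e(xk)\le 2\widehat D(\lambda,0)$, and $\widehat D(\lambda,0)$ is $\nu$-integrable by hypothesis; that would supply the dominating function and make a mollification-plus-DCT argument work. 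But this is precisely the ingredient your sketch omits, and the paper's Fej\'er--Fatou route is what the authors actually use, presumably because the analogous uniform bound fails for the minorant family.)

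Your uniqueness argument also departs from the paper's. The paper observes that $j'(n)=0$ only for $n\neq 0$ (since $g$ is not assumed differentiable at $0$) and closes the gap with a further application of \cite[Lemma~4]{GV} to deduce $j'(0)=0$. You instead pass to $k(z)=zh(z)/\sin^2(\pi z)$ and claim it is entire of exponential type $0$ with $|k(x)|=O(|x|)$ on $\R$. This route is plausible but needs more justification: the bound $|k(x)|=O(|x|)$ does not follow merely from $h$ being bounded, since $\sin^2(\pi x)$ vanishes at $\Z$; one must control $h$ near the integers uniformly, and establishing that the quotient has exponential type $0$ requires a Phragm\'en--Lindel\"of argument. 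The paper's second appeal to \cite[Lemma~4]{GV} is the cleaner and intended route, and I would recommend following it.
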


\begin{theorem}[Distribution Method - Best Approximation]\label{BADis}
Let $\{x \mapsto G(\lambda,x)\}_{\lambda\in I}$ be a best approximation family and $\nu$ be a non-negative Borel measure on $I$ satisfying 
\begin{equation*}
 \int_{I} \int_{-\infty}^{\infty} \left|G(\lambda, x) - K(\lambda, x)\right| \, \dx\,\dnu  < \infty.
\end{equation*}
Let $g: \R \to \R$ be a function on $\mc{S}'(\R)$ that is continuous on $\R/\{0\}$, and such that 
\begin{equation*}
 \widehat{g}(\varphi) = \int_{-\infty}^{\infty}\left\{\int_{I} \widehat{G}(\lambda, t)\, \dnu \right\} \varphi (t) \, \dt
\end{equation*}
for all Schwartz functions $\varphi$ supported on $[-\h,\h]^c$. Then there exists a unique best approximation $k(z)$ of exponential type $\pi$ for $g(x)$. The function $k(x)$ interpolates the values of $g(x)$ at $\Z + \h$, satisfying
\begin{equation*}
 \sgn (\cos \pi x)\,\{g(x) - k(x)\} \geq 0
\end{equation*}
and
\begin{equation*}
\int_{-\infty}^{\infty} |g(x) -k(x)| \,\dx =  \int_{I} \int_{-\infty}^{\infty} \left|G(\lambda, x) - K(\lambda, x)\right| \, \dx\,\dnu.
\end{equation*}
\end{theorem}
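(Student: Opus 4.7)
The plan is to mimic the proof of Theorem \ref{thm1} but with the distributional Paley-Wiener Theorem \ref{thm1dist} replacing the classical one, following the same pattern used for Theorems \ref{MinDis} and \ref{MajDis}. First I construct the candidate $k$ by a subtraction trick. Set $D(\lambda,x) := G(\lambda,x)-K(\lambda,x)$ and
\[
  d(x) := \int_I D(\lambda,x)\, \dnu.
\]
By hypothesis (vi) the integrand equals $\sgn(\cos\pi x)\,|D(\lambda,x)|$, so Tonelli together with the integrability hypothesis shows $d\in L^1(\R)$ with $|d(x)|=\int_I|D(\lambda,x)|\,\dnu$ and $\sgn(\cos\pi x)\,d(x)=|d(x)|$ a.e. Continuity of $D(\lambda,\cdot)$ in $x$ combined with dominated convergence will identify $d$ with a continuous representative on $\R$, the interpolation property in (vi) will force $d(n+\hh)=0$ for each $n\in\Z$, and $\|d\|_1$ will equal the prescribed double integral $\int_I\int_{-\infty}^\infty |G-K|\,\dx\,\dnu$.

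Next I set $k := g-d$ as a tempered distribution and verify that $\tk$ is supported in $[-\h,\h]$. For any Schwartz test function $\p$ with $\supp\p\subseteq[-\h,\h]^c$, the hypothesis on $\tg$ gives $\tg(\p)=\int\p(t)\int_I\widehat{G}(\lambda,t)\,\dnu\,\dt$. For $d$, the pointwise bound $|\widehat{D}(\lambda,t)|\le\widehat{D}(\lambda,0)=\|D(\lambda,\cdot)\|_1$ together with the integrability hypothesis justifies Fubini, giving $\widehat{d}(\p)=\int\p(t)\int_I\widehat{D}(\lambda,t)\,\dnu\,\dt$. Since each $K(\lambda,\cdot)$ has exponential type $\pi$ and lies in $L^1(\R)$, classical Paley-Wiener forces $\widehat{K}(\lambda,t)=0$ for $|t|>\h$, so $\widehat{D}(\lambda,t)=\widehat{G}(\lambda,t)$ on $\supp\p$, and $\tk(\p)=0$. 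Theorem \ref{thm1dist} then exhibits $k$ as the restriction to $\R$ of an entire function of exponential type $\pi$. Identifying $d$ with its continuous representative yields $g(x)-k(x)=d(x)$ pointwise on $\R\setminus\{0\}$, from which the interpolation at $\Z+\hh$, the sign condition, and the $L^1$ identity all follow.

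For optimality, let $F$ be entire of exponential type $\pi$ with $g-F\in L^1(\R)$; otherwise the inequality is trivial. Then $F-k=d-(g-F)\in L^1(\R)$ is itself entire of type $\pi$, so classical Paley-Wiener yields $\widehat{F-k}$ continuous on $\R$, supported in $[-\h,\h]$, and vanishing at $\pm\h$. Inserting the Fourier expansion \eqref{pf4} of $\sgn(\cos\pi x)$ (with its uniformly bounded partial sums) and mimicking the calculation in \eqref{pf5} gives $\int\sgn(\cos\pi x)\{F(x)-k(x)\}\,\dx=0$, because every coefficient $\widehat{F-k}(-n-\hh)$ either vanishes by support or by continuity at the endpoints. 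Therefore
\[
  \int_{-\infty}^\infty |g-F|\,\dx \;\ge\; \int_{-\infty}^\infty \sgn(\cos\pi x)\{g-F\}\,\dx \;=\; \int_{-\infty}^\infty \sgn(\cos\pi x)\{g-k\}\,\dx \;=\; \|g-k\|_1.
\]
If equality holds, $\sgn(\cos\pi x)\{g(x)-F(x)\}=|g(x)-F(x)|$ a.e., and continuity of $g-F$ on $\R\setminus\{0\}$ forces $F(n+\hh)=g(n+\hh)$ for every $n\in\Z$; then $F-k$ is an $L^1$ entire function of type $\pi$ vanishing on $\Z+\hh$, and the classical interpolation uniqueness result used at the end of Theorem \ref{thm1} yields $F=k$.

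The main obstacle is Step~2: reconciling the distributional identity $g=k+d$ with the pointwise evaluations needed for interpolation and for the sign condition. Since $g$ is given only as an element of $\sS'(\R)$ coinciding with a function off the origin, and $k$ is produced abstractly by Theorem \ref{thm1dist}, one must promote $d$ from an a.e.-defined $L^1$ function to a continuous representative on $\R\setminus\{0\}$ that genuinely equals $g-k$ there. Making this rigorous will combine the continuity hypothesis (i), dominated convergence driven by the integrability of $d$, and the radial-decreasing hypothesis (iii) to control the tail of the Fubini interchange on test functions supported away from $[-\h,\h]$.
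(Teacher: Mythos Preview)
Your proposal is correct and follows essentially the same route as the paper's proof: construct $d$ by integrating the differences, show $\widehat{g-d}$ is supported in $[-\hh,\hh]$, invoke the distributional Paley--Wiener theorem to produce $k$, and then run the optimality/uniqueness argument via the Fourier expansion of $\sgn(\cos\pi x)$ exactly as in Theorem~\ref{thm1}.

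The one place you make life harder than necessary is your ``main obstacle'': you want to prove $d$ itself has a continuous representative via dominated convergence, but there is no obvious $\nu$-integrable dominating function uniform in $x$ near a given point. The paper sidesteps this entirely. Once $d=g-k$ almost everywhere, note that $g$ is continuous on $\R\setminus\{0\}$ by hypothesis and $k$ is the restriction of an entire function, so $g-k$ is automatically continuous on $\R\setminus\{0\}$. The a.e.\ inequality $\sgn(\cos\pi x)\,d(x)\ge 0$ then upgrades to a pointwise inequality for $g-k$ on $\R\setminus\{0\}$ by continuity, and the interpolation $g(n+\hh)=k(n+\hh)$ follows immediately from the sign change of $\cos\pi x$ at those points. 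No separate continuity argument for $d$ is required.
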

\vspace{0.2cm}

Similar results can be stated for the problem of majorizing or minorizing by functions of exponential type $2\pi\delta$, or best approximating by functions of type $\pi \delta$. It is a matter of changing the interpolation points to $\delta \Z$ or $\delta (\Z + \h)$, and changing the support intervals to $[-\delta, \delta]^c$ in the case of minorants/majorants and to $[-\frac{\delta}{2}, \frac{\delta}{2}]^c$ in the case of best approximation. For simplicity, we will proceed in our exposition only with type $2\pi$ for minorants/majorants and type $\pi$ for the best approximation problem.

The condition 
\begin{equation*}
 \widehat{g}(\varphi) = \int_{-\infty}^{\infty}\left\{\int_{I} \widehat{G}(\lambda, t)\, \dnu \right\} \varphi (t) \, \dt
\end{equation*}
for all Schwartz functions $\varphi$ supported on $[-\delta,\delta]^c$, that appears on the statements of the theorems, basically says that the Fourier transform $\widehat{g}$, which in principle is only a tempered distribution, is actually given by a function
\begin{equation*}
 t \mapsto \int_{I} \widehat{G}(\lambda, t)\, \dnu
\end{equation*}
outside the interval $[-\delta,\delta]$. This is a typical behavior of functions with polynomial growth, that might have Fourier transform given by a singular part supported on the origin plus an additional component given by a function outside the origin (e.g. the Fourier transform of $-\log |x|$ is given by $\frac{1}{2|t|}$ outside the origin). It is clear in this context that the only piece of information relevant for the Beurling-Selberg extremal problem is the knowledge of the Fourier transform of the original function outside a compact interval.

Finally, we shall see that this method applied to the Gaussian family 
$$G(\lambda, x) = G_{\lambda}(x) = e^{-\pi \lambda x^2}$$ 
is very powerful, producing most of the previously known examples in the literature and a wide class of new ones. In particular, we will be able to arrive at families of functions like
\begin{equation*}
 \widetilde{G}(\alpha, x) = \log \left( \frac{x^2 + \alpha^2}{x^2 + 4}\right)\ \ {\rm or} \ \ \widetilde{G}(\alpha, x) = |x|^{\alpha}\,,
\end{equation*}
and one could think about integrating the new parameter $\alpha$ to produce other functions. Although these families do not satisfy the original requirements (i)-(iv) this is a perfectly reasonable argument, since by Fubini's theorem, an integral with respect to the parameter $\alpha$ will only produce a different measure $\nu$ for the original integration on the parameter $\lambda$ for the Gaussian. Therefore, there is no loss of generality in starting the procedure with a nice family of functions satisfying the regularity requirements (i)-(iv) and iterating the method as desired.

\section{Proofs of Theorems \ref{MinDis} and \ref{MajDis}}
Here we give a detailed proof of Theorem \ref{MinDis}. The proof of  Theorem \ref{MajDis} follows the same general method.

First we construct the extreme minorant.  Recall the difference function 
\begin{equation*}
  D(\lambda, x) = G(\lambda, x) - L(\lambda, x) \geq 0\,,
\end{equation*}
and for each $x \in \R$ define the function
\begin{equation}\label{def of d}
 d(x) := \int_{I} D(\lambda, x)\, \dnu \geq 0.
\end{equation}
In principle, the value of $d(x)$ could be $\infty$ at some points. Observe, however, that $d(x) \in L^1(\R)$, since
\begin{equation*}
 \int_{-\infty}^{\infty} d(x)\, \dx = \int_{I} \int_{-\infty}^{\infty} D(\lambda, x)\, \dx \, \dnu = \int_{I} \widehat{D}(\lambda, 0) \, \dnu < \infty\,,
\end{equation*}
by the hypotheses of our theorem. The Fourier transform $\widehat{d}(t)$ is thus a continuous function given by
\begin{align}
\begin{split} \label{FTd0}
 \widehat{d}(t) = \int_{-\infty}^{\infty} d(x) & \, e(-tx) \, \dx = \int_{-\infty}^{\infty}  \int_{I} D(\lambda, x)\,  e(-tx) \, \dnu \, \dx\\
& = \int_{I}  \int_{-\infty}^{\infty}  D(\lambda, x)\, e(-tx) \, \dx\, \dnu = \int_{I} \widehat{D}(\lambda, t)\, \dnu \,,
\end{split}
\end{align}
and observe that for $|t|\geq 1$ we have
\begin{equation}\label{FTd}
 \widehat{d}(t) = \int_{I} \widehat{G}(\lambda, t)\, \dnu.
\end{equation}

Let $U \in \mc{S}'(\R)$ be the tempered distribution given by
\begin{equation}\label{temp1}
 U(\varphi) = \int_{-\infty}^{\infty} \{g(x) - d(x)\}\, \varphi(x)\, \dx.
\end{equation}
We shall prove that the Fourier transform $\widehat{U}$ is supported on $[-1,1]$. In fact, for any $\varphi \in \mc{S}(\R)$ with support in $[-1,1]^c$ we have
\begin{align*}
 \widehat{U}(\varphi) & = \widehat{g} (\varphi) - \widehat{d}(\varphi) \\
& = \int_{-\infty}^{\infty}\left\{\int_{I} \widehat{G}(\lambda, t)\, \dnu \right\} \varphi (t) \, \dt - \int_{-\infty}^{\infty} \widehat{d}(t) \,\varphi(t)\, \dt = 0\,,
\end{align*}
by (\ref{FTd}) and the hypotheses of the theorem. By the Paley-Wiener theorem for distributions we find out that $\widehat{U} \in \mc{E}'(\R)$ and 
\begin{equation*}
 z \mapsto l(z) = \widehat{U}_{\xi} \left(e(\xi z)\right)
\end{equation*}
defines an entire function of exponential type $2\pi$ such that 
\begin{equation}\label{temp2}
 U(\varphi)  = \int_{-\infty}^{\infty} l(x)\, \varphi(x) \, \dx
\end{equation}
for all $\varphi \in \mc{S}(\R)$. From (\ref{temp1}) and (\ref{temp2}) we conclude that 
\begin{equation*}
d(x) = g(x) - l(x)
\end{equation*}
for almost all $x \in \R$. In particular, 
\begin{equation}\label{min8.1}
l(x) \leq g(x)
\end{equation}
for all $x \in \R$, and
\begin{align*}\label{MinIntDistributionApp}
\begin{split}
 \int_{-\infty}^{\infty} \{g(x) - l(x) \} \, \dx &= \int_{-\infty}^{\infty} d(x)\,\dx = \int_{I} \widehat{D}(\lambda, 0) \,\dnu \\
 & =  \int_{I} \int_{-\infty}^{\infty} \{ G(\lambda, x) - L(\lambda, x)\} \, \dx\,\dnu < \infty.
\end{split}
\end{align*}

Next we consider the interpolation points.  Because of conditions (i) and (ii), the Poisson summation formula can be applied to $D(\lambda,x)$ giving a pointwise identity
\begin{equation}\label{PSF8.1}
\sum_{n=-\infty}^{\infty} D(\lambda, x+n) = \sum_{k=-\infty}^{\infty} \widehat{D}(\lambda, k) \,e(xk).
\end{equation}
From condition (iv) of our hypotheses we have $D\bigl(\lambda, n + \h\bigr) = 0$ for all $n \in \Z$. Thus, plugging $x = \h$ in (\ref{PSF8.1}) and using the classical Paley-Wiener theorem we arrive at 
\begin{equation}\label{PSF8.2}
\widehat{D}(\lambda, 0)  = - \sum_{\stackrel{k=-\infty}{k \neq 0}}^{\infty} (-1)^k \,\widehat{G}(\lambda, k) .
\end{equation}
Now we define the function
\begin{equation*}
d_1(x) := g(x) - l(x)\,,
\end{equation*}
and observe that $d_1(x)$ is a non-negative continuous function on $\R/\{0\}$ that is equal almost everywhere to $d(x)$ defined in (\ref{def of d}), and thus in $L^1(\R)$. Define a periodic function $p: \R/\Z \to \R^{+} \cup \{\infty\}$ by
\begin{equation*}
p(x) : = \sum_{n \in \Z} d_1(n+x).
\end{equation*}
An application of Fubini's theorem provides
\begin{equation*}
 \int_{\R/\Z} p(x)\, \dx = \int_{-\infty}^{\infty} d_1(x)\, \dx < \infty\,,
\end{equation*}
and therefore $p(x) \in L^1(\R/\Z)$. Moreover, the Fourier coefficients of $p(x)$ satisfy
\begin{equation*}
 \widehat{p}(k) = \widehat{d}_1(k) = \widehat{d}(k)
\end{equation*}
for all $k \in \Z$. Convolution with the smoothing F\'{e}jer kernel
\begin{equation*}
 F_N (x) = \frac{1}{N+1} \left( \frac{\sin \pi(N+1) x}{\sin \pi x} \right)^2
\end{equation*}
produces the pointwise identity
\begin{align*}
p*F_N(x) &= \sum_{k=-N}^{N} \left(1 - \frac{|k|}{N}\right)\, \widehat{p}(k)\, e(xk)\\
&  = \widehat{d}(0) + \sum_{\stackrel{k=-N}{k \neq0}}^{N} \left(1 - \frac{|k|}{N}\right)\, \widehat{d}(k)\, e(xk)\\
 & = \widehat{d}(0) + \sum_{\stackrel{k=-N}{k \neq0}}^{N} \left(1 - \frac{|k|}{N}\right)\,\int_{I} \widehat{G}(\lambda, k)\, \dnu \, e(xk)\\
& = \widehat{d}(0) + \int_{I} \left\{\sum_{\stackrel{k=-N}{k \neq0}}^{N} \left(1 - \frac{|k|}{N}\right)\, \widehat{G}(\lambda, k) \, e(xk)\right\} \, \dnu\,,
\end{align*}
where we have used (\ref{FTd}). In particular, for $x = \h$ we obtain
\begin{equation}\label{Fatou0}
\widehat{d}(0) = p*F_N\bigl(\tfrac{1}{2}\bigr) + \int_{I}  \left\{\sum_{\stackrel{k=-N}{k \neq0}}^{N} (-1)^{k+1} \left(1 - \frac{|k|}{N}\right)\, \widehat{G}(\lambda, k) \right\} \, \dnu.
\end{equation}
By condition (iii) of the hypotheses, the integrand in (\ref{Fatou0}) in non-negative. Moreover, by condition (ii) it converges absolutely to (\ref{PSF8.2}) as $N \to \infty$. Therefore, an application of Fatou's lemma together with (\ref{FTd0}) gives us
\begin{align*}
\widehat{d}(0)  & \geq \liminf_{N\to \infty}  p*F_N\bigl(\tfrac{1}{2}\bigr) \\
& \ \ \ \ \ \ \ \ \ \ \ + \liminf_{N\to \infty}\int_{I}  \left\{\sum_{\stackrel{k=-N}{k \neq0}}^{N} (-1)^{k+1} \left(1 - \frac{|k|}{N}\right)\, \widehat{G}(\lambda, k) \right\} \, \dnu\\
& \geq \liminf_{N\to \infty}  p*F_N\bigl(\tfrac{1}{2}\bigr)\\
& \ \ \ \ \ \ \ \ \ \ \ + \int_{I} \liminf_{N\to \infty}\left\{\sum_{\stackrel{k=-N}{k \neq0}}^{N} (-1)^{k+1} \left(1 - \frac{|k|}{N}\right)\, \widehat{G}(\lambda, k) \right\} \, \dnu\\
&\\
& = \liminf_{N\to \infty}  p*F_N\bigl(\tfrac{1}{2}\bigr) + \int_{I} \widehat{D}(\lambda, 0) \, \dnu\\
&\\
& = \liminf_{N\to \infty}  p*F_N\bigl(\tfrac{1}{2}\bigr) + \widehat{d}(0)\,,
\end{align*}
and since $p*F_N(x)$ is non-negative we conclude that
\begin{equation*}\label{Fatou1}
 \liminf_{N\to \infty}  p*F_N\bigl(\tfrac{1}{2}\bigr) = 0.
\end{equation*}
We now use the definition of $p(x)$, Fubini's theorem and Fatou's lemma again to arrive at 
\begin{align}\label{Fatou2}
\begin{split}
 0 = \liminf_{N\to \infty}  & \,p*F_N\bigl(\tfrac{1}{2}\bigr) = \liminf_{N\to \infty} \int_{0}^{1} p(y)\,F_N\bigl(\tfrac{1}{2} - y\bigr)\,\dy\\
& = \liminf_{N\to \infty} \int_{0}^{1} \left\{\sum_{n \in \Z} d_1(n+y)\right\}\,F_N\bigl(\tfrac{1}{2} - y\bigr)\,\dy\\
& = \liminf_{N\to \infty} \sum_{n \in \Z} \left\{ \int_{0}^{1} d_1(n+y)\,F_N\bigl(\tfrac{1}{2} - y\bigr)\,\dy\right\}\\
& \geq \sum_{n \in \Z} \, \liminf_{N\to \infty} \, \int_{0}^{1} d_1(n+y)\,F_N\bigl(\tfrac{1}{2} - y\bigr)\,\dy\,\\
& = \sum_{n \in \Z} d_1\bigl(n + \tfrac{1}{2}\bigr)\,,
\end{split}
\end{align}
where the last equality follows from the fact that $d_1(x)$ is continuous at the points $n + \h$, $n \in \Z$. From (\ref{Fatou2}) and the non-negativity of $d_1(x)$ we arrive at the conclusion
\begin{equation}\label{interpolation8.1}
d_1\bigl(n + \tfrac{1}{2}\bigr) = 0 \Rightarrow g\bigl(n + \tfrac{1}{2}\bigr) = l\bigl(n + \tfrac{1}{2}\bigr)
\end{equation}
for all $n \in \Z$. From (\ref{min8.1}) and the fact that $g(x)$ is differentiable on $\R/\{0\}$ (by hypothesis) we also have
\begin{equation*}\label{interpolation8.2}
 g'\bigl(n + \tfrac{1}{2}\bigr) = l'\bigl(n + \tfrac{1}{2}\bigr)
\end{equation*}
for all $n \in \Z$.

Finally, we show that the integral is minimal and we establish uniqueness.  Assume that $\widetilde{l}(z)$ is a real entire function of exponential 
type $2\pi$ such that 
\begin{equation}\label{min8.2}
 \widetilde{l}(x) \leq g(x)
\end{equation}
for all $x \in \R$, and suppose that $\{g(x) - \widetilde{l}(x)\}$ is integrable. In this case the function
\begin{equation*}
j(z)  = l(z) - \widetilde{l}(z)
\end{equation*}
has exponential type $2\pi$ and is integrable on $\R$. An application of \cite[Lemma 4]{GV} together with (\ref{interpolation8.1}) and (\ref{min8.2}) gives us
\begin{align}\label{inter8.2}
 \begin{split}
\widehat{j}(0) &= \lim_{N\to \infty} \sum_{n = -N}^{N} \left(1 - \frac{|n|}{N} \right)\, j\bigl(n+ \tfrac{1}{2} \bigr)\\
& = \lim_{N\to \infty} \sum_{n = -N}^{N} \left(1 - \frac{|n|}{N} \right)\, \left( g\bigl(n + \tfrac{1}{2} \bigr) - \widetilde{l}\bigl(n + \tfrac{1}{2} \bigr) \right) \geq 0\,.
\end{split}
\end{align}
This plainly verifies that 
\begin{equation*}
 \int_{-\infty}^{\infty} \{g(x) - \widetilde{l}(x)\}\, \dx \geq \int_{-\infty}^{\infty} \{g(x) - l(x)\}\, \dx\,,
\end{equation*}
proving the minimality of the integral. If equality occurs in (\ref{inter8.2}) we must have 
\begin{equation}\label{interpolation8.3}
 \widetilde{l}\bigl(n + \tfrac{1}{2}\bigr) = g\bigl(n + \tfrac{1}{2} \bigr) = l\bigl(n + \tfrac{1}{2} \bigr) 
\end{equation}
for all $n \in \Z$. From (\ref{min8.2}) we also have
\begin{equation}\label{interpolation8.4}
 \widetilde{l}\,'\bigl(n + \tfrac{1}{2}\bigr) = g'\bigl(n + \tfrac{1}{2} \bigr) = l'\bigl(n + \tfrac{1}{2} \bigr)
\end{equation}
for all $n \in \Z$. The interpolation conditions (\ref{interpolation8.3}) and (\ref{interpolation8.4}) imply that
\begin{equation*}
 j\bigl(n + \tfrac{1}{2} \bigr) = j'\bigl(n + \tfrac{1}{2} \bigr) =0
\end{equation*}
for all $n \in \Z$. By a second application of \cite[Lemma 4]{GV}, we conclude that the entire function $j(z)$ must be identically zero, thus proving the uniqueness of the extremal minorant $l(z)$. This finishes the proof.

We note that in the proof of uniqueness in the majorant case, we will obtain 
\begin{equation*}
 j'(n) =0
\end{equation*}
for all $n \neq 0$, since the original function $g(x)$ is not supposed to be differentiable at the origin.  A further application 
of \cite[Lemma 4]{GV} provides $j'(0) = 0$, thus leading to uniqueness.

\section{Proof of Theorem \ref{BADis}}

The approach here is similar to the proof of Theorem \ref{MajDis}.  We start by considering the difference function
\begin{equation*}
  D(\lambda, x) = G(\lambda, x) - K(\lambda, x)\,,
\end{equation*}
and for each $x \in \R$ define the function
\begin{equation*}
 d(x) = \int_{I} D(\lambda, x)\, \dnu\,.
\end{equation*}
From condition (v) on the hypotheses, we know that
\begin{equation}\label{sgnd}
 \sgn(\cos \pi x) \,d(x) \geq 0\,,
\end{equation}
with this value being possibly infinite at some points. Observe, however, that $d(x)$ is integrable on $\R$, with
\begin{align*}
\int_{-\infty}^{\infty} |d(x)|\, \dx &= \int_{-\infty}^{\infty} \int_{I} |D(\lambda, x)|\, \dnu \,\dx \\
&  =  \int_{I}  \int_{-\infty}^{\infty} |G(\lambda, x) - K(\lambda,x)|\, \dx\, \dnu < \infty.
\end{align*}
An application of Fubini's theorem gives us 
\begin{equation*}
 \widehat{d}(t) = \int_{I} \widehat{D}(\lambda, t)\, \dnu
\end{equation*}
for all $t \in \R$, and since $z \mapsto K(\lambda, z)$ has exponential type $\pi$, we have
\begin{equation}\label{BA1}
 \widehat{d}(t) = \int_{I} \widehat{G}(\lambda, t)\, \dnu
\end{equation}
for $|t|\geq \h$. Let $V \in \mc{S}'(\R)$ be the tempered distribution given by
\begin{equation}\label{temp9.1}
 V(\varphi) = \int_{-\infty}^{\infty} \{g(x) - d(x)\}\, \varphi(x)\, \dx.
\end{equation}
We shall prove that the Fourier transform $\widehat{V}$ is supported on $\bigl[-\h,\h\bigr]$. In fact, for any $\varphi \in \mc{S}(\R)$ with support in $\bigl[-\h,\h\bigr]^c$ we have
\begin{align*}
 \widehat{V}(\varphi) & = \widehat{g} (\varphi) - \widehat{d}(\varphi) \\
& = \int_{-\infty}^{\infty}\left\{\int_{I} \widehat{G}(\lambda, t)\, \dnu \right\} \varphi (t) \, \dt - \int_{-\infty}^{\infty} \widehat{d}(t) \,\varphi(t)\, \dt = 0\,,
\end{align*}
by (\ref{BA1}) and the hypotheses of the theorem. By the Paley-Wiener theorem for distributions we find out that $\widehat{V} \in \mc{E}'(\R)$ and 
\begin{equation*}
 z \mapsto k(z) = \widehat{V}_{\xi} \left(e(\xi z)\right)
\end{equation*}
defines an entire function of exponential type $\pi$ such that 
\begin{equation}\label{temp9.2}
 V(\varphi)  = \int_{-\infty}^{\infty} k(x)\, \varphi(x) \, \dx
\end{equation}
for all $\varphi \in \mc{S}(\R)$. From (\ref{temp9.1}) and (\ref{temp9.2}) we conclude that 
\begin{equation}\label{sgnd2}
d(x) = g(x) - k(x)
\end{equation}
for almost all $x \in \R$. In particular,
\begin{align*}
 \int_{-\infty}^{\infty} |g(x) - k(x)|\, \dx & = \int_{-\infty}^{\infty} |d(x)|\, \dx \\
& = \int_{I}  \int_{-\infty}^{\infty} |G(\lambda, x) - K(\lambda,x)|\, \dx\, \dnu < \infty\,.
\end{align*}

Since $g(x)$ is continuous on $\R/\{0\}$ (by hypothesis) and $k(x)$ is the restriction to $\R$ of an entire function, expressions 
(\ref{sgnd}) and (\ref{sgnd2}) imply that 
\begin{equation*}
\sgn(\cos \pi x) \,\{g(x) - k(x)\} \geq 0
\end{equation*}
for all $x \in \R$. In particular, we must have 
\begin{equation*}
g\bigl( n + \tfrac{1}{2} \bigr) = k\bigl( n + \tfrac{1}{2} \bigr)
\end{equation*}
for all $n \in \Z$.

Recall that the function $x \mapsto \sgn(\cos \pi x)$ is periodic on $\R$ with period $2$ and has Fourier series expansion 
\begin{equation}\label{Sec9.3}
\sgn(\cos \pi x) = \lim_{N\rightarrow \infty} \frac{1}{\pi} \sum_{n=-N-1}^{N} \frac{(-1)^n}{n+\hh}\, e\bigl((n+\hh)x\bigr).
\end{equation}
Moreover, the partial sums on the right of (\ref{Sec9.3}) are uniformly bounded. If $\psi(x)$ is a function of exponential type $\pi$ that is integrable on $\R$, its Fourier transform $\widehat{\psi}(t)$ will be supported on $\bigl[ -\h, \h \bigr]$ and we will have
\begin{align}\label{Sec9.4}
\begin{split}
\int_{-\infty}^{\infty} \sgn(\cos & \pi x)\,\psi(x) \,\dx\\
&= \lim_{N\rightarrow\infty} \frac{1}{\pi} \sum_{n = -N-1}^{N} 
		\frac{(-1)^n}{n+\hh} \int_{-\infty}^{\infty} \psi(x)\, e\bigl((n+\hh)x\bigr)\ \dx\\
	&= \lim_{N\rightarrow\infty} \frac{1}{\pi} \sum_{n = -N-1}^{N}  \frac{(-1)^n}{n+\hh} \, \widehat{\psi}\bigl(-n - \hh \bigr) = 0.
\end{split}
\end{align}
Now assume that $\widetilde{k}(z)$ is an entire function of exponential type $\pi$ such that 
\begin{equation*}
\int_{-\infty}^{\infty} |g(x) - \widetilde{k}(x)|\, \dx < \infty.
\end{equation*}
In this case, the function $\{k(x) - \widetilde{k}(x)\}$ has exponential type $\pi$ and is integrable on $\R$. Thus, using (\ref{Sec9.4}) we obtain
\begin{align}\label{Sec9.5}
\begin{split}
\int_{-\infty}^{\infty}  |g(x) - &\widetilde{k}(x)|\, \dx  \geq \left| \int_{-\infty}^{\infty} \sgn(\cos \pi x) \,\{g(x) - \widetilde{k}(x)\}\, \dx \right|\\
& = \left| \int_{-\infty}^{\infty} \sgn(\cos \pi x) \,\bigl\{\bigl(g(x) - k(x)\bigr) + \bigl(k(x) - \widetilde{k}(x)\bigr)\bigr\} \, \dx \right|\\
&=\left| \int_{-\infty}^{\infty} \sgn(\cos \pi x) \,\{g(x) - k(x)\}\, \dx \right|\\
& = \int_{-\infty}^{\infty}  |g(x) - k(x)|\, \dx\,,
\end{split}
\end{align}
proving the minimality of the integral. If equality occurs in (\ref{Sec9.5}) we must have 
\begin{equation*}
\widetilde{k}\bigl( n + \tfrac{1}{2} \bigr) = g\bigl( n + \tfrac{1}{2} \bigr) =  k\bigl( n + \tfrac{1}{2} \bigr)
\end{equation*}
for all $n \in \Z$. Therefore
\begin{equation}\label{Sec9.6}
z\mapsto k(z) - \widetilde{k}(z)
\end{equation}
is an entire function of exponential type at most $\pi$ and takes the value zero at each point of the set $\Z + \hh$.  From basic interpolation theorems for entire functions of exponential type (see \cite[Vol. II, p. 275]{Z}), we conclude that the entire function (\ref{Sec9.6}) is identically zero, proving the uniqueness. This completes the proof.

\section*{Part III: Applications}

By combining the results from Part I and Part II, we are able to solve the Beurling-Selberg extremal problem for a wide class of even functions, extending the 
works \cite{CV2}, \cite{CV3}, \cite{GV} and \cite{Lit}. As mentioned in the Introduction, some the $L^1(\R)$-approximations (without the one-sided conditions) recover results of Sz.- Nagy \cite{nagy,Shapiro}. Nagy's results are applicable to functions with a Fourier transform that satisfies certain monotonicity conditions for $t\geq \delta>0$ and is either even or odd. It is an interesting open problem whether the extremals for all such functions can be obtained with our methods. Throughout Part III of this paper we consider extremal minorants/majorants of exponential type $2\pi$ and best approximations of exponential type $\pi$, unless otherwise specified.

\section{Positive Definite Functions}

Recall that in Part I we worked with the Gaussian family
\begin{equation*}
 G_{\lambda}(x) = e^{-\pi \lambda x^2}\,,
\end{equation*}
where $\lambda >0$ is a parameter, with Fourier transform $t \mapsto \widehat{G}_{\lambda}(t)$ given by
\begin{equation*}
\widehat{G}_{\lambda}(t) = \lambda^{-\hh} e^{-\pi \lambda^{-1} t^2}.
\end{equation*}

In Theorems \ref{thm1}, \ref{thm2} and \ref{thm3} we constructed, for each $\lambda >0$, the extremal minorant $L_{\lambda}(z)$, the extremal majorant $M_{\lambda}(z)$, and the best approximation $K_{\lambda}(z)$ for $G_{\lambda}(x)$. These functions satisfy all the hypotheses (i)-(vi) of the distribution method with the values of the minimal integrals given by
\begin{align}
\begin{split}
\int_{-\infty}^{\infty} \{G_{\lambda}(x) & - L_{\lambda}(x)\} \, \dx \\
 & = \lambda^{-\h} \Bigl(1 -\theta_2\bigl(0, i\lambda^{-1}\bigr)\Bigr) =    \sum_{\stackrel{n=-\infty}{n\neq0}}^{\infty} (-1)^n \widehat{G}_{\lambda}(n)\,,\label{Sec10.1} 
\end{split}\\
\begin{split}
\int_{-\infty}^{\infty} \{M_{\lambda}(x) & - G_{\lambda}(x)\} \ \dx \\ \label{Sec10.2}
&  = \lambda^{-\h} \Bigl(\theta_3\bigl(0, i\lambda^{-1}\bigr) -1 \Bigr)  = \sum_{\stackrel{n=-\infty}{n\neq0}}^{\infty} \widehat{G}_{\lambda}(n)\,,
\end{split}\\
\begin{split}
\int_{-\infty}^{\infty} \bigl|G_{\lambda}(x) &- K_{\lambda}(x)\bigr| \ \dx \\ \label{Sec10.3}
& = \lambda^{-\h} \int_{-\h}^{\h} \theta_1\bigl(u,i\lambda^{-1}\bigr)\ \du = \frac{1}{\pi} \sum_{n=-\infty}^{\infty} \frac{(-1)^n}{n+\hh} \,\tG_{\lambda}(n + \hh)\,.
\end{split}
\end{align}
From the three expressions above and the transformation formulas (\ref{poisson1}), (\ref{poisson2}) and (\ref{poisson3}) we obtain the following estimates 
\begin{equation}\label{Sec10.4}
 \int_{-\infty}^{\infty} \{G_{\lambda}(x)  - L_{\lambda}(x)\} \, \dx = O\bigl(e^{-\tfrac{\pi}{\lambda}}\bigr) \ \text{as} \ \lambda \to 0\,, \ \text{and}\ O\bigl(\lambda^{-\h}\bigr)\ \text{as} \ \lambda \to \infty\,,
\end{equation}
\begin{equation}\label{Sec10.5}
 \int_{-\infty}^{\infty} \{M_{\lambda}(x)  - G_{\lambda}(x)\} \, \dx = O\bigl(e^{-\tfrac{\pi}{\lambda}}\bigr) \ \text{as} \ \lambda \to 0\,, \ \text{and}\ O(1)\ \text{as} \ \ \lambda \to \infty\,,
\end{equation}
\begin{equation}\label{Sec10.6}
 \int_{-\infty}^{\infty} \bigl|G_{\lambda}(x) - K_{\lambda}(x)\bigr| \ \dx  = O\bigl(e^{-\tfrac{\pi}{4\lambda}}\bigr) \ \text{as} \ \lambda \to 0\,, \ \text{and}\ O\bigl(\lambda^{-\h}\bigr)\  \text{as} \  \lambda \to \infty\,.
\end{equation}
In order to apply Theorems \ref{MinDis}, \ref{MajDis}, and \ref{BADis}, to the Gaussian family, we require a non-negative measure $\nu$ defined
on the Borel subsets of $I = (0,\infty)$.  We further require that integrals with respect to $\nu$ over the parameter $\lambda$ appearing in (\ref{Sec10.1}), (\ref{Sec10.2}) and (\ref{Sec10.3}) be finite. The estimates (\ref{Sec10.4}), (\ref{Sec10.5}) and (\ref{Sec10.6}) show that this class of measures is wide because of the very fast decay at the origin. One should compare this class of measures with the ones used in \cite{CV2}, \cite{CV3}, and \cite{GV}, to fully notice the improvement.

As a first application we present the following result.

\begin{corollary}\label{thmPDF}
Let $\nu$ be a finite non-negative Borel measure on $(0,\infty)$ and consider the function $g: \R \to \R$ given by
\begin{equation}\label{posdef}
 g(x) = \int_0^{\infty} e^{-\pi \lambda x^2} \dnu\,.
\end{equation}
\begin{enumerate}
 \item[(i)] There exists a unique extremal minorant $l(z)$ of exponential type $2\pi$ for $g(x)$. The function $l(x)$ interpolates the values of $g(x)$ at $\Z + \h$ and satisfies
\begin{equation*}
 \int_{-\infty}^{\infty} \{g(x) - l(x) \}\, \dx = \int_{0}^{\infty} \left\{ \sum_{\stackrel{n=-\infty}{n\neq0}}^{\infty} (-1)^n \widehat{G}_{\lambda}(n)\right\} \, \dnu.
\end{equation*}

 \item[(ii)] There exists a unique extremal majorant $m(z)$ of exponential type $2\pi$ for $g(x)$. The function $m(x)$ interpolates the values of $g(x)$ at $\Z$ and satisfies
\begin{equation*}
 \int_{-\infty}^{\infty} \{m(x) - g(x) \}\, \dx = \int_{0}^{\infty} \left\{ \sum_{\stackrel{n=-\infty}{n\neq0}}^{\infty} \widehat{G}_{\lambda}(n)\right\} \, \dnu.
\end{equation*}

 \item[(iii)] There exists a unique best approximation $k(z)$ of exponential type $\pi$ for $g(x)$. The function $k(x)$ interpolates the values of $g(x)$ at $\Z + \h$ and satisfies
\begin{equation*}
 \int_{-\infty}^{\infty} |g(x) - k(x)|\, \dx = \int_{0}^{\infty} \left\{ \frac{1}{\pi} \sum_{n=-\infty}^{\infty} \frac{(-1)^n}{n+\hh} \,\tG_{\lambda}(n + \hh)\right\} \, \dnu.
\end{equation*}

\end{enumerate}

\end{corollary}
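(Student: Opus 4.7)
The plan is to apply Theorems \ref{MinDis}, \ref{MajDis}, and \ref{BADis} directly to the Gaussian family $G(\lambda,x) = G_\lambda(x) = e^{-\pi\lambda x^2}$ on $I = (0,\infty)$ with the measure $\nu$ from the statement. Part I already supplies the extremals $L_\lambda$, $M_\lambda$, and $K_\lambda$, so what remains is to verify the hypotheses on the family, on $\nu$, and on $g$, and then translate the three conclusions back through the explicit formulas (\ref{Sec10.1})--(\ref{Sec10.3}).

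First I would verify that $\{G_\lambda\}_{\lambda>0}$ is simultaneously a minorant, majorant, and best-approximation family in the sense of Section \ref{OptimalIntegration}. Conditions (i) and (ii) are immediate from the formulas $G_\lambda(x)=e^{-\pi\lambda x^2}$ and $\widehat G_\lambda(t) = \lambda^{-\h}e^{-\pi t^2/\lambda}$; condition (iii) holds because $\widehat G_\lambda$ is a positive, radially decreasing Gaussian; and conditions (iv), (v), (vi) are precisely Theorems \ref{thm2}, \ref{thm3}, and \ref{thm1}, which also identify the interpolation nodes and the sign condition needed for (vi). Next, the integrability hypotheses on $\nu$ reduce to boundedness on $(0,\infty)$ of the single-variable functions $\lambda\mapsto\int\{G_\lambda-L_\lambda\}\,\dx$, $\lambda\mapsto\int\{M_\lambda-G_\lambda\}\,\dx$, and $\lambda\mapsto\int|G_\lambda-K_\lambda|\,\dx$. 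By the asymptotic estimates (\ref{Sec10.4}), (\ref{Sec10.5}), and (\ref{Sec10.6}) each of these is $O(e^{-\pi/\lambda})$ as $\lambda\to 0^+$ and at worst $O(1)$ as $\lambda\to\infty$, hence bounded on $(0,\infty)$ and therefore integrable against the finite measure $\nu$.

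The function $g$ in (\ref{posdef}) is bounded by $\nu((0,\infty))$ and continuous on $\R$ by dominated convergence, so it lies in $\sS'(\R)$. For any $\epsilon>0$, the derivatives $\partial_x^k e^{-\pi\lambda x^2}$ are uniformly bounded in $\lambda\in(0,\infty)$ on the set $\{|x|\ge\epsilon\}$, so differentiation under the integral sign shows $g$ is $C^\infty$ on $\R\setminus\{0\}$; in particular the regularity demanded by the three distribution theorems is met. For the Fourier-transform identification, given $\varphi\in\sS(\R)$ supported in $[-\delta,\delta]^c$ (with $\delta=1$ or $\delta=\h$), two applications of Fubini's theorem yield
\begin{equation*}
\widehat g(\varphi) = \int_{-\infty}^\infty g(x)\,\widehat\varphi(x)\,\dx = \int_{-\infty}^\infty\left\{\int_0^\infty \widehat G_\lambda(t)\,\dnu\right\}\varphi(t)\,\dt,
\end{equation*}
which is precisely the hypothesis on $\widehat g$ required by Theorems \ref{MinDis}, \ref{MajDis}, and \ref{BADis}.

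The main technical point is justifying the second Fubini exchange: one needs absolute convergence of $\int_0^\infty\int_{-\infty}^\infty \lambda^{-\h}e^{-\pi t^2/\lambda}|\varphi(t)|\,\dt\,\dnu$. Because $\varphi$ vanishes on $[-\delta,\delta]$, the inner integrand satisfies $\lambda^{-\h}e^{-\pi t^2/\lambda}\le\lambda^{-\h}e^{-\pi\delta^2/\lambda}$, and a direct optimization shows $\sup_{\lambda>0}\lambda^{-\h}e^{-\pi\delta^2/\lambda} = (2\pi\delta^2)^{-\h}e^{-\h}$, so the integrand is dominated by $C(\delta)|\varphi(t)|$ uniformly in $\lambda$; absolute convergence then follows from $\varphi\in L^1(\R)$ and $\nu((0,\infty))<\infty$. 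With the Fourier-transform identification in hand, Theorems \ref{MinDis}, \ref{MajDis}, and \ref{BADis} produce the unique extremals $l(z)$, $m(z)$, $k(z)$ with the interpolation properties stated, and the three minimum-integral formulas follow by integrating (\ref{Sec10.1}), (\ref{Sec10.2}), and (\ref{Sec10.3}) against $\nu$, which is legal by Fubini given the integrability already established.
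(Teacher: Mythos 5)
Your proposal is correct and follows exactly the route the paper intends: the paper sets up the Gaussian family, notes that Theorems~\ref{thm1}--\ref{thm3} supply hypotheses (i)--(vi) with the minimal integrals (\ref{Sec10.1})--(\ref{Sec10.3}), records the asymptotics (\ref{Sec10.4})--(\ref{Sec10.6}), and then states Corollary~\ref{thmPDF} without a separate proof block. You have simply spelled out the verification of the hypotheses of Theorems~\ref{MinDis}, \ref{MajDis}, and \ref{BADis} (finiteness from the boundedness of the minimal integrals against the finite measure $\nu$, regularity of $g$, and the Fubini justification for the Fourier-transform identity), all of which is routine and accurate.
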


Due to a classical result of Schoenberg (see \cite[Theorems 2 and 3]{Scho}), a function $g:\R \to \R$ admits the representation  (\ref{posdef}) if and only if its radial extension to $\R^n$ is positive definite, for all $n \in \N$, or equivalently if the function $g\bigl(|x|^{1/2}\bigr)$ is completely monotone. Recall that a function $f(t)$ is {\it completely monotone} for $t\geq 0 $ if
\begin{equation*}
(-1)^n f^{(n)}(t) \geq0 \ \ {\rm for}  \ \ 0 < t < \infty\,,
\end{equation*}
and
\begin{equation*}
f(0) = f\bigl(0^{+}\bigr)\,,
\end{equation*}
the last condition expressing the continuity of $f(t)$ at the origin. Using this characterization we arrive at the following interesting examples contemplated by our Corollary \ref{thmPDF}.\\

{\it Example 1}.  \ \ $g(x) = e^{-\alpha|x|^{2r}}, \ \ \alpha\geq0 \ \ {\rm and} \ \ 0\leq r \leq 1.$\\

{\it Example 2}. \ \ $g(x) = \bigl(|x|^2 + \alpha^2\bigr)^{-\beta}\,, \ \ \alpha >0 \ \ \rm{and} \ \ \beta \geq 0.$\\

The first example shows in particular that we can recover all the theory for the exponential function $g(x) = e^{-\lambda |x|}$ developed in \cite{GV}, \cite{CV2} and \cite{CV3} from the Gaussian and the distribution method. The second example includes the Poisson kernel $g(x) = 2\lambda/ (\lambda^2 + 4 \pi^2 x^2)$, $\lambda >0$. The values of the minimal integrals in these cases are collected in the Table 1 of Section \ref{OptimalIntegration}.

Recently, Chandee and Soundararajan in \cite{CS} used the extremals for $f(x) = \log\bigl(x^2/(x^2 + 4)\bigr)$, described in \cite{CV2}, to obtain improved upper bounds for $|\zeta(\tfrac{1}{2} + it)|$ assuming the Riemann Hypothesis (RH). They remarked that the extremals for the function $f(x) = \log\bigl((x^2 + \alpha^2)/(x^2 + 4)\bigr)$, for $\alpha \neq 0$, not contemplated in the previous literature, arise in bounding $|\zeta(\tfrac{1}{2} \pm \alpha + it)|$, assuming RH, and might lead to improved bounds in the critical strip for inequalities of the type
\begin{equation}\label{Titchineq}
 \log \zeta(s) = O\left\{ \frac{(\log t)^{1-2\alpha}}{\log \log t}\right\}
\end{equation}
where $s = (1/2 + \alpha) + it$ and $0 \leq \alpha < 1/2$. Inequality (\ref{Titchineq}) can be found in \cite[Theorem 14.5]{Titch}.

Here we are able to obtain the extremals for this class of functions as an application of Corollary \ref{thmPDF}. \\
\\
{\it Example 3}. \ \ $g(x) = -\log\left(\frac{x^2 + \alpha^2}{x^2 + \beta ^2}\right) $, for $0 < \alpha < \beta$.\\
\\
Indeed, for $0 < \alpha < \beta$ consider the non-negative finite measure
\begin{equation*}
\dnu = \frac{\left\{e^{-\pi \lambda \alpha^2} - e^{-\pi \lambda \beta^2} \right\}}{\lambda}\, \dl\,,
\end{equation*}
and observe that 
\begin{equation*}
 -\log\left(\frac{x^2 + \alpha^2}{x^2 + \beta ^2}\right) = \int_0^{\infty}  e^{-\pi \lambda x^2} \frac{\left\{e^{-\pi \lambda \alpha^2} - e^{-\pi \lambda \beta^2} \right\}}{\lambda}\, \dl\,.
\end{equation*}
In particular, the values of the minimal integrals in the one-sided approximations are given by
\begin{equation*}
\int_{-\infty}^{\infty} \left\{-\log\left(\frac{x^2 + \alpha^2}{x^2 + \beta ^2}\right)- l_{\alpha,\beta}(x)\right\} \, \dx  =  2 \log\left(\frac{1 + e^{-2\pi\alpha}}{1 + e^{-2\pi\beta}}\right)\,,
\end{equation*}
and 
\begin{equation*}
\int_{-\infty}^{\infty} \left\{m_{\alpha,\beta}(x) + \log\left(\frac{x^2 + \alpha^2}{x^2 + \beta ^2}\right)\right\} \, \dx  =  2 \log\left(\frac{1 - e^{-2\pi\alpha}}{1 - e^{-2\pi\beta}}\right)\,.
\end{equation*}
We expect to return to the applications of these extremal functions to the theory of the Riemann zeta-function in a future work.

\section{Extremal Functions for $|x|^{\sigma}$}

Next we write $s = \sigma + it$ for a complex variable, and we define the meromorphic function $s\mapsto\gamma(s)$ by
\begin{equation*}\label{dist10}
\gamma(s) = \pi^{-s/2} \Gamma\left(\frac{s}{2}\right).
\end{equation*}
The function $\gamma(s)$ is analytic on $\C$ except for simple poles at the points $s = 0, -2, -4, \dots $.  It also occurs in the
functional equation
\begin{equation}\label{dist11}
\gamma(s)\zeta(s) = \gamma(1 - s) \zeta(1 - s),
\end{equation} 
where $\zeta(s)$ is the Riemann zeta-function.

\begin{lemma}\label{distlem2}
Let $0 < \delta$ and let $\p(t)$ be a Schwartz function supported on $[-\delta, \delta]^c$.  Then 
\begin{equation}\label{dist21}
s\mapsto \int_{-\infty}^{\infty} |t|^{-s-1} \p(t)\ \dt
\end{equation}
defines an entire function of $s$, and the identity
\begin{equation}\label{dist22}
\gamma(s+1) \int_{-\infty}^{\infty} |t|^{-s-1} \p(t)\ \dt = \gamma(-s) \int_{-\infty}^{\infty} |x|^{s}\, \tp(x)\ \dx
\end{equation}
holds in the half plane $\{s\in\C: -1< \sigma\}$.  In particular, the function on the right of {\rm (\ref{dist22})} is analytic at
the points $s = 0, 2, 4, \dots $.
\end{lemma}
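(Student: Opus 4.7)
The plan is to handle the three claims in order: entirety of the integral in (\ref{dist21}), the identity (\ref{dist22}) in the strip $-1<\sigma<0$ via a Gaussian integral representation and Parseval, and extension to $\sigma>-1$ by analytic continuation, from which the removable-pole assertion then follows automatically.

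For the first claim, the support condition $\supp\p\subseteq[-\delta,\delta]^c$ forces $|t|\ge\delta$ on the support of the integrand, so for $s$ in any compact $K\subset\C$ there is a uniform bound $\bigl||t|^{-s-1}\p(t)\bigr|\le C_K(1+|t|)^{A_K}|\p(t)|$. This is an integrable Schwartz majorant independent of $s$, and Morera's theorem together with Fubini on small triangles delivers analyticity on all of $\C$.

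For the identity, I would use the Gaussian representation
\begin{equation*}
\gamma(s+1)\,|t|^{-s-1}=\int_0^\infty u^{(s-1)/2}e^{-\pi u t^2}\,\du\qquad(\Re(s+1)>0),
\end{equation*}
obtained by the substitution $u\mapsto\pi t^2 u$ in $\Gamma((s+1)/2)$. Inserting this into the left of (\ref{dist22}) and interchanging integrations (Fubini is legitimate in the strip $-1<\sigma<0$), the inner $t$-integral can be evaluated by Parseval using the self-Fourier-duality $\widehat{e^{-\pi u(\cdot)^2}}(y)=u^{-1/2}e^{-\pi u^{-1}y^2}$, producing $u^{-1/2}\int e^{-\pi u^{-1}y^2}\tp(y)\,\dy$. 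Substituting $v=1/u$ and swapping integrals once more, the inner $v$-integral collapses to $\Gamma(-s/2)(\pi y^2)^{s/2}=\gamma(-s)|y|^s$, valid since $\Re(-s/2)>0$ in our strip, which is precisely the right side of (\ref{dist22}).

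For the extension, the left side is analytic on $\sigma>-1$ since $\gamma(s+1)$ has its poles only at $s=-1,-3,\ldots$ and the $t$-integral is entire by step one. The right side is a priori meromorphic on $\sigma>-1$: the integral $\int|x|^s\tp(x)\,\dx$ is analytic for $\sigma>-1$ (the singularity $|x|^\sigma$ at the origin is integrable and $\tp$ is Schwartz at infinity), while $\gamma(-s)$ contributes simple poles at $s=0,2,4,\ldots$. Agreement of the two sides on the strip extends to $\sigma>-1$ by the identity theorem, which forces those would-be poles to be removable—exactly the final clause of the lemma. The cancellation can also be verified directly: at $s=2n$, Fourier inversion and integration by parts yield $\int x^{2n}\tp(x)\,\dx=(-2\pi i)^{-2n}\p^{(2n)}(0)=0$ because $\p$ vanishes identically near the origin. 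The main technical obstacle is keeping the two Fubini interchanges honest, which reduces to verifying absolute integrability in the strip using the Schwartz decay of $\p$ and $\tp$ and the controlled behavior of $u^{(s-1)/2}$ and $v^{-s/2-1}$ near the endpoints $0$ and $\infty$; once this is done, the remaining manipulations are classical.
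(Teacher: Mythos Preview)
Your proposal is correct. The overall architecture---Morera for entirety, establish (\ref{dist22}) in the strip $-1<\sigma<0$, then analytically continue---matches the paper exactly. The difference lies in how the strip identity is obtained: the paper simply invokes \cite[Lemma 1, p.~117]{Stein}, which is the distributional Fourier transform formula for the Riesz kernel $|x|^{-\alpha}$, whereas you derive it from scratch via the Mellin/Gaussian representation $\gamma(s+1)|t|^{-s-1}=\int_0^\infty u^{(s-1)/2}e^{-\pi u t^2}\,\du$, Parseval against the self-dual Gaussian, and the substitution $v=1/u$. Your route is more self-contained (and pleasantly in keeping with the Gaussian-subordination theme of the paper), at the cost of having to justify two Fubini interchanges; the paper's route is a one-line citation. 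Your additional direct check that $\int x^{2n}\tp(x)\,\dx=(2\pi i)^{-2n}\p^{(2n)}(0)=0$ is a nice bonus the paper does not include, though it is not logically needed once analytic continuation has done its work.
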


\begin{proof}
Because $\p(t)$ is supported in $[-\delta, \delta]^c$, the function $t\mapsto |t|^{-s-1}\p(t)$ is integrable on $\R$ for
all complex values $s$.  Hence by Morera's theorem the integral on the right of (\ref{dist21}) defines an entire function.
The identity (\ref{dist22}) holds in the infinite strip $\{s\in\C: -1 < \sigma < 0\}$ by \cite[Lemma 1, p. 117]{Stein}, and therefore it
holds in the half plane $\{s\in\C: -1< \sigma \}$ by analytic continuation.  The left hand side of (\ref{dist22}) is clearly
analytic at each point of $\{s\in\C: -1 < \sigma \}$, hence the right hand side of (\ref{dist22}) is also analytic at
each point of this half plane. 
\end{proof}
Lemma \ref{distlem2} plainly says that the Fourier transform of the function $x \mapsto \gamma(-\sigma) |x|^{\sigma}$ is given by the function
\begin{equation*}
t \mapsto \gamma(\sigma+1) |t|^{-\sigma-1}
\end{equation*}
outside the interval $[-\delta, \delta]$, for $-1 < \sigma$, $\sigma \neq 0,2,4,...$. 

We intend to apply the distribution method with the Gaussian. For this, consider the non-negative Borel measure $\nu_{\sigma}$ on $(0,\infty)$ given by
\begin{equation*}
 \dnus = \lambda^{-\tfrac{\sigma}{2}-1}\, \dl\,,
\end{equation*}
and observe that we have exactly
\begin{equation}\label{sigmameasure}
 \int_0^{\infty} \widehat{G}_{\lambda}(t) \,\dnus = \gamma(\sigma+1) |t|^{-\sigma-1}.
\end{equation}
For $-1 < \sigma$, the measure $\nu_{\sigma}$ is admissible for the minorant and best approximation problems according to the asymptotics (\ref{Sec10.4}) and (\ref{Sec10.6}). For the majorant problem we shall require that $0 < \sigma$, according to the asymptotics (\ref{Sec10.5}). 

It will be convenient to introduce the Dirichlet $L$-function
$L(s, \chi)$, where $\chi$ is the unique nonprincipal Dirichlet character to the modulus $4$.  This $L$-function is
defined in the half plane $\{s\in \C: 1 < \sigma\}$ by the absolutely convergent series
\begin{equation*}\label{app50}
L(s, \chi) = \sum_{n=1}^{\infty} \chi(n) n^{-s} = \sum_{n=0}^{\infty} (-1)^n (2n+1)^{-s}.
\end{equation*}
Then the $L$-function extends by analytic continuation to an entire function of $s$.  As $\chi$ is a primitive
character, the $L$-function satisfies the functional equation
\begin{equation*}\label{app51}
\xi(s, \chi) = \xi(1 - s, \chi),
\end{equation*}
where $s\mapsto \xi(s, \chi)$ is the entire function defined by
\begin{equation}\label{app52}
\xi(s, \chi) = \left(\frac{4}{\pi}\right)^{\frac{s+1}{2}} \Gamma\left(\frac{s+1}{2}\right) L(s, \chi).
\end{equation}

\begin{lemma}\label{applem5}
Let $\sigma > -1$.  Then we have
\begin{equation}\label{app53}
\int_{-\infty}^{\infty} \int_0^{\infty} \bigl|G_{\lambda}(x) - K_{\lambda}(x)\bigr| \lambda^{-\frac{\sigma}{2} - 1}\ \dl\ \dx
	= \left(\frac{4}{\pi}\right)^{\frac{3+\sigma}{2}} \Gamma\left(\frac{1 +\sigma}{2}\right) L(2 +\sigma, \chi).
\end{equation}
\end{lemma}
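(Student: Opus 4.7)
My plan is to start from the explicit formula for the inner integral in (\ref{Sec10.3}), substitute the Gaussian Fourier transform $\tG_{\lambda}(t) = \lambda^{-\hh} e^{-\pi\lambda^{-1}t^2}$, and then integrate against the measure $\lambda^{-\sigma/2-1}\,\dl$ on $(0,\infty)$. The key algebraic maneuver will be to interchange the sum over $n$ with the outer $\lambda$-integral, evaluate each term as a Gamma integral, and recognize the resulting Dirichlet series as a value of $L(s,\chi)$.

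Concretely, (\ref{Sec10.3}) rewrites the inner integral as
\begin{equation*}
\int_{-\infty}^{\infty} \bigl|G_{\lambda}(x) - K_{\lambda}(x)\bigr|\,\dx = \frac{\lambda^{-\hh}}{\pi}\sum_{n=-\infty}^{\infty} \frac{(-1)^n}{n+\hh}\, e^{-\pi\lambda^{-1}(n+\hh)^2}.
\end{equation*}
After multiplying by $\lambda^{-\sigma/2-1}$ and interchanging sum and integral, the substitution $u = \pi(n+\hh)^2/\lambda$ evaluates
\begin{equation*}
\int_0^{\infty} \lambda^{-\g-\sigma/2}\, e^{-\pi(n+\hh)^2/\lambda}\,\dl = \pi^{-(1+\sigma)/2}\, |n+\hh|^{-(1+\sigma)}\,\Gamma\bigl(\tfrac{1+\sigma}{2}\bigr).
\end{equation*}
Combined with the factor $(-1)^n/(n+\hh)$, this produces the signed sum $\sum_{n\in\Z} (-1)^n\,\sgn(n+\hh)\,|n+\hh|^{-(2+\sigma)}$. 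Pairing the index $n\ge 0$ with $-n-1$ shows both halves contribute identically, so the series collapses to
\begin{equation*}
2\sum_{k=0}^{\infty} (-1)^k (k+\hh)^{-(2+\sigma)} = 2^{3+\sigma}\sum_{k=0}^{\infty} (-1)^k (2k+1)^{-(2+\sigma)} = 2^{3+\sigma}\, L(2+\sigma,\chi).
\end{equation*}
Collecting constants gives $\pi^{-(3+\sigma)/2}\,\Gamma\bigl(\tfrac{1+\sigma}{2}\bigr)\cdot 2^{3+\sigma}\, L(2+\sigma,\chi) = (4/\pi)^{(3+\sigma)/2}\,\Gamma\bigl(\tfrac{1+\sigma}{2}\bigr)\, L(2+\sigma,\chi)$, which is exactly (\ref{app53}).

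The only delicate point is the interchange of summation and $\lambda$-integration, since the inner series is alternating and not term-wise non-negative, so Tonelli cannot be applied directly. I would justify the swap by verifying absolute integrability via the same Gamma computation:
\begin{equation*}
\sum_{n\in\Z} \frac{1}{\pi|n+\hh|}\int_0^{\infty} \lambda^{-\g-\sigma/2}\, e^{-\pi(n+\hh)^2/\lambda}\,\dl = \pi^{-(3+\sigma)/2}\,\Gamma\bigl(\tfrac{1+\sigma}{2}\bigr)\sum_{n\in\Z} |n+\hh|^{-(2+\sigma)},
\end{equation*}
which converges precisely when $2+\sigma > 1$, i.e.\ when $\sigma > -1$. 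This coincides with the hypothesis of the lemma, so Fubini's theorem applies on the full stated range and the remaining calculation is routine.
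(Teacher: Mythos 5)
Your proposal is correct and follows essentially the same route as the paper: apply the value of the inner $x$-integral from Theorem \ref{thm1} (equivalently \eqref{Sec10.3}), justify swapping the alternating sum with the $\lambda$-integral by the absolute-value Gamma estimate (which the paper records as the bound by $\gamma(1+\sigma)\sum_n \pi^{-1}|n+\hh|^{-\sigma-2}$), evaluate each term as a Gamma integral, and assemble the resulting series into $L(2+\sigma,\chi)$. The only cosmetic difference is that you pair $n$ with $-n-1$ explicitly and factor out $2^{3+\sigma}$, whereas the paper writes the two-sided sum directly and compresses the bookkeeping through the $\gamma$ and $\xi(\cdot,\chi)$ notation; both lead to identical constants.
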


\begin{proof}
Using Fubini's theorem, (\ref{theta1}), and (\ref{intro10}), we get
\begin{align}\label{app54}
\begin{split}
\int_{-\infty}^{\infty} \int_0^{\infty} \bigl|G_{\lambda}(x) - &K_{\lambda}(x)\bigr| \lambda^{-\frac{\sigma}{2} - 1}\ \dl\ \dx\\
	&= \int_0^{\infty} \left\{\lambda^{-\h} \int_{-\h}^{\h} \theta_1\bigl(u,i\lambda^{-1}\bigr)\ \du\right\} \lambda^{-\frac{\sigma}{2} - 1}\ \dl\\
	&= \int_0^{\infty} \left\{\sum_{n=-\infty}^{\infty} \frac{(-1)^n}{\pi(n+\hh)}\, e^{-\pi \lambda^{-1}(n+\h)^2} \right\} \lambda^{\frac{-\sigma - 3}{2}}\ \dl.
\end{split}
\end{align}
Because
\begin{equation*}\label{app55}
\begin{split}
\int_0^{\infty} \Bigg\{\sum_{n=-\infty}^{\infty}&\frac{1}{\pi|n+\hh|} e^{-\pi \lambda^{-1}(n+\h)^2} \Bigg\} \lambda^{\frac{-\sigma - 3}{2}}\ \dl\\
	&= \sum_{n=-\infty}^{\infty} \frac{1}{\pi|n+\hh|} \left\{\int_0^{\infty} \lambda^{\frac{-\sigma - 3}{2}} e^{-\pi \lambda^{-1} (n+\h)^2}\ \dl\right\}\\
	&= \gamma(1 + \sigma) \sum_{n=-\infty}^{\infty} \frac{1}{\pi} \bigl| n+\hh \bigr|^{-\sigma - 2} < \infty,
\end{split}
\end{equation*}
the partial sums of the series on the right of (\ref{app54}) are dominated by an integrable function.  Thus we have
\begin{align}\label{app56}
\begin{split}
\int_0^{\infty} \Bigg\{\sum_{n=-\infty}^{\infty} &\frac{(-1)^n}{\pi(n+\hh)} e^{-\pi \lambda^{-1}(n+\h)^2} \Bigg\} \lambda^{\frac{-\sigma - 3}{2}}\ \dl\\
	&=  \sum_{n=-\infty}^{\infty} \frac{(-1)^n}{\pi(n+\hh)} \left\{\int_0^{\infty} \lambda^{\frac{-\sigma - 3}{2}} e^{-\pi \lambda^{-1} (n+\h)^2}\ \dl\right\}\\
	&= \gamma(1 + \sigma) \sum_{n=-\infty}^{\infty} \frac{(-1)^n}{\pi(n+\hh)} \bigl| n+\hh \bigr|^{-\sigma - 1}\\
	&= \left(\frac{4}{\pi}\right)^{\frac{3+\sigma}{2}} \Gamma\left(\frac{1 + \sigma}{2}\right) L(2 + \sigma, \chi).
\end{split}
\end{align}
Identities (\ref{app54}) and (\ref{app56}) imply that the identity (\ref{app53}) holds for $\sigma >-1$.  
\end{proof}

The following lemma can be proved in a similar manner using Theorems \ref{thm2} and \ref{thm3}, and then
applying termwise integration to the series (\ref{theta2}) and (\ref{theta3}).	
	
\begin{lemma}\label{applem6}	
Let $\sigma > -1$.  Then we have
\begin{equation}\label{app64}	
\int_{-\infty}^{\infty} \int_0^{\infty} \big\{G_{\lambda}(x) - L_{\lambda}(x)\big\} \lambda^{-\frac{\sigma}{2} - 1}\ \dl\ \dx
	= \bigl(2 - 2^{1-\sigma}\bigr)\, \gamma(1 + \sigma)\,\zeta(1 + \sigma).
\end{equation}
Let $\sigma > 0$.  Then we have
\begin{equation}\label{app65}
\int_{-\infty}^{\infty} \int_0^{\infty} \big\{M_{\lambda}(x) - G_{\lambda}(x)\big\} \lambda^{-\frac{\sigma}{2} - 1}\ \dl\ \dx
	= 2 \,\gamma(1 + \sigma) \,\zeta(1 + \sigma).
\end{equation}
\end{lemma}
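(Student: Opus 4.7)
The plan is to mimic the template of Lemma~\ref{applem5}. First I would apply Theorem~\ref{thm2}, together with $\int_{-\infty}^{\infty} G_\lambda(x)\,\dx = \lambda^{-\h}$, to record the closed form
\begin{equation*}
\int_{-\infty}^{\infty}\bigl\{G_\lambda(x) - L_\lambda(x)\bigr\}\,\dx = \lambda^{-\h}\bigl\{1 - \theta_2\bigl(0,i\lambda^{-1}\bigr)\bigr\},
\end{equation*}
and Theorem~\ref{thm3} analogously to get $\int\{M_\lambda - G_\lambda\}\,\dx = \lambda^{-\h}\{\theta_3(0,i\lambda^{-1}) - 1\}$. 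Since both integrands on the left of (\ref{app64}) and (\ref{app65}) are non-negative, Tonelli's theorem legitimizes interchanging the order of integration. After the substitution $\mu = \lambda^{-1}$, which converts $\lambda^{-(\sigma+3)/2}\,\dl$ into $\mu^{(\sigma-1)/2}\,d\mu$, the two problems reduce respectively to
\begin{equation*}
\int_0^{\infty}\bigl\{1 - \theta_2(0,i\mu)\bigr\}\mu^{(\sigma-1)/2}\,d\mu \quad\text{and}\quad \int_0^{\infty}\bigl\{\theta_3(0,i\mu) - 1\bigr\}\mu^{(\sigma-1)/2}\,d\mu.
\end{equation*}

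Next I would expand the theta functions via (\ref{theta2}) and (\ref{theta3}) as
\begin{equation*}
1 - \theta_2(0,i\mu) = 2\sum_{n=1}^\infty (-1)^{n+1}e^{-\pi n^2\mu}, \qquad \theta_3(0,i\mu) - 1 = 2\sum_{n=1}^\infty e^{-\pi n^2\mu},
\end{equation*}
and invoke the Mellin integral $\int_0^{\infty}e^{-\pi n^2\mu}\mu^{(\sigma-1)/2}\,d\mu = \gamma(\sigma+1)\,n^{-\sigma-1}$. Formal termwise integration then gives $2\gamma(\sigma+1)\sum_{n\ge 1} (-1)^{n+1}n^{-\sigma-1}$ for the first integral and $2\gamma(\sigma+1)\sum_{n\ge 1} n^{-\sigma-1}$ for the second. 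Recognizing these as the Dirichlet eta function and the Riemann zeta function, and applying the standard identity $\eta(s) = (1 - 2^{1-s})\zeta(s)$, one arrives precisely at the claimed constants $(2 - 2^{1-\sigma})\gamma(\sigma+1)\zeta(\sigma+1)$ in (\ref{app64}) and $2\gamma(\sigma+1)\zeta(\sigma+1)$ in (\ref{app65}).

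The only nontrivial point is justifying the termwise integration, and here the two identities behave differently. The majorant identity (\ref{app65}) is handled at once by Tonelli since every term is non-negative; the absolute series $\sum n^{-\sigma-1}$ converges precisely for $\sigma > 0$, matching the stated range. The main obstacle is the minorant identity (\ref{app64}) in the range $-1 < \sigma \le 0$, where the absolute series $\sum n^{-\sigma-1}$ diverges, so the Lemma~\ref{applem5} trick of dominating by the absolute sum fails. I would resolve this by exploiting the alternating structure: by the Leibniz estimate applied to the monotonically decreasing positive sequence $e^{-\pi n^2\mu}$, the partial sums $S_N(\mu) = 2\sum_{n=1}^{N}(-1)^{n+1}e^{-\pi n^2\mu}$ satisfy $0 \le S_N(\mu) \le 2 e^{-\pi\mu}$ uniformly in $N$, and the dominator $2 e^{-\pi\mu}\mu^{(\sigma-1)/2}$ is integrable on $(0,\infty)$ for every $\sigma > -1$ (the exponential factor tames the tail at $\infty$, while $\mu^{(\sigma-1)/2}$ with $(\sigma-1)/2 > -1$ is integrable at the origin). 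Dominated convergence then permits passage of the limit inside the integral, producing the conditionally convergent alternating series $\sum (-1)^{n+1}n^{-\sigma-1} = \eta(\sigma+1)$, which by analytic continuation still equals $(1 - 2^{-\sigma})\zeta(\sigma+1)$ on the full range $\sigma > -1$.
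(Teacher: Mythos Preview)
Your proof is correct and follows essentially the same route the paper indicates: the paper merely states that the lemma ``can be proved in a similar manner using Theorems~\ref{thm2} and~\ref{thm3}, and then applying termwise integration to the series (\ref{theta2}) and (\ref{theta3}),'' which is exactly what you do (the substitution $\mu=\lambda^{-1}$ is cosmetic). In fact you are more careful than the paper: the Lemma~\ref{applem5} template of dominating by the absolute series genuinely fails for (\ref{app64}) when $-1<\sigma\le 0$, and your Leibniz-bound argument $0\le S_N(\mu)\le 2e^{-\pi\mu}$ combined with dominated convergence is the right fix for that range.
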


Theorems \ref{MinDis}, \ref{MajDis} and \ref{BADis} now apply. The values of the integrals in the following corollary are obtained from Lemmas \ref{applem5} and \ref{applem6}. 
\begin{corollary}\label{thm19}
Let $-1<\sigma$ with $\sigma \neq 0,2,4,...$ and let 
\begin{equation*}
 g_{\sigma}(x) = \gamma(-\sigma) |x|^{\sigma}.
\end{equation*}
\begin{enumerate} 
 \item[(i)] There exists a unique extremal minorant $l_{\sigma}(z)$ of exponential type $2\pi$ for $g_{\sigma}(x)$. The function $l_{\sigma}(x)$ interpolates the values of $g_{\sigma}(x)$ at $\Z + \h$ and satisfies
\begin{equation}\label{Sec11.1}
\int_{-\infty}^{\infty} \{g_{\sigma}(x) - l_{\sigma}(x)\} \, \dx  =  \bigl(2 - 2^{1-\sigma}\bigr) \gamma (1 +\sigma) \,\zeta(1+ \sigma).
\end{equation}
\item[(ii)] If $0<\sigma$, there exists a unique extremal majorant $m_{\sigma}(z)$ of exponential type $2\pi$ for $g_{\sigma}(x)$. The function $m_{\sigma}(x)$ interpolates the values of $g_{\sigma}(x)$ at $\Z$ and satisfies
\begin{equation}\label{Sec11.2}
\int_{-\infty}^{\infty} \{m_{\sigma}(x) - g_{\sigma}(x)\} \, \dx  =  2 \,\gamma (1+ \sigma) \,\zeta(1+ \sigma).
\end{equation}
\item[(iii)] There exists a unique best approximation $k_{\sigma}(z)$ of exponential type $\pi$ for $g_{\sigma}(x)$. The function $k_{\sigma}(x)$ interpolates the values of $g_{\sigma}(x)$ at $\Z + \h$, satisfying
\begin{equation*}
 \sgn(\cos \pi x)\{ g_{\sigma}(x) - k_{\sigma}(x)\} \geq 0
\end{equation*}
and
\begin{equation}\label{Sec11.3}
\int_{-\infty}^{\infty} \bigl|g_{\sigma}(x) - k_{\sigma}(x)\bigr| \, \dx  =  \left(\frac{4}{\pi}\right)^{\frac{3+\sigma}{2}} \Gamma\left(\frac{1 +\sigma}{2}\right) L(2 +\sigma, \chi) .
\end{equation}

\end{enumerate}

\end{corollary}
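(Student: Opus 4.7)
The plan is to apply the distribution method of Part~II (Theorems~\ref{MinDis}, \ref{MajDis}, \ref{BADis}) to the Gaussian family $\{G_\lambda\}_{\lambda>0}$ paired with the Borel measure $\dnus = \lambda^{-\sigma/2 - 1}\,\dl$ on $I = (0,\infty)$. By Theorems~\ref{thm1}--\ref{thm3} this family satisfies hypotheses (i)--(vi) and serves simultaneously as a minorant, majorant, and best approximation family, with explicit extremals $L_\lambda$, $M_\lambda$, $K_\lambda$.

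The first thing to verify is the integrability hypothesis of each distribution theorem. Using (\ref{Sec10.4}) and (\ref{Sec10.6}), the inner one-sided integrals for $L_\lambda$ and $K_\lambda$ decay exponentially as $\lambda \to 0$ and like $\lambda^{-\h}$ as $\lambda \to \infty$; multiplying by $\lambda^{-\sigma/2-1}$ and integrating produces a finite number exactly when $\sigma > -1$, matching parts~(i) and~(iii). For the majorant, (\ref{Sec10.5}) gives $O(1)$ at infinity, which forces the stronger restriction $\sigma > 0$ demanded in part~(ii).

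The second thing to verify is the distributional identification of $g_\sigma$. By Fubini,
\begin{equation*}
\int_0^\infty \widehat{G}_\lambda(t)\,\dnus = \int_0^\infty \lambda^{-(\sigma+3)/2}\, e^{-\pi t^2/\lambda}\,\dl = \gamma(\sigma+1)\,|t|^{-\sigma-1},
\end{equation*}
which is precisely (\ref{sigmameasure}). The exclusion $\sigma \neq 0, 2, 4, \ldots$ is exactly the condition that $\gamma(-\sigma)$ is finite, so that $g_\sigma(x) = \gamma(-\sigma)|x|^\sigma$, which is locally integrable for $\sigma > -1$, defines a tempered distribution. Applying Lemma~\ref{distlem2} to every Schwartz function $\varphi$ supported in $[-\delta,\delta]^c$ (with $\delta=1$ for parts~(i)--(ii) and $\delta=\h$ for part~(iii)) yields
\begin{equation*}
\widehat{g}_\sigma(\varphi) = \gamma(\sigma+1)\int_{-\infty}^\infty |t|^{-\sigma-1}\varphi(t)\,\dt = \int_{-\infty}^\infty \Bigl(\int_0^\infty \widehat{G}_\lambda(t)\,\dnus\Bigr)\varphi(t)\,\dt,
\end{equation*}
which is the hypothesis required by the distribution theorems. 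Since $g_\sigma$ is continuous and differentiable on $\R\setminus\{0\}$ and is in addition continuous at the origin when $\sigma>0$, all regularity hypotheses are met.

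With these checks in place, Theorems~\ref{MinDis}, \ref{MajDis}, and \ref{BADis} directly produce $l_\sigma$, $m_\sigma$, and $k_\sigma$ with the claimed interpolation, sign, and uniqueness properties, and the values (\ref{Sec11.1})--(\ref{Sec11.3}) are read off from Lemmas~\ref{applem5} and~\ref{applem6}. The step most deserving of care is the second verification: one must confirm that the excluded values $\sigma = 0, 2, 4, \ldots$ line up precisely with the poles of $\gamma(-\sigma)$ in the half-plane $\sigma > -1$, and that the distributional Fourier transform of $|x|^\sigma$ provided by Lemma~\ref{distlem2} agrees with the integral-against-measure formula required by the distribution theorems on the correct support $[-\delta,\delta]^c$.
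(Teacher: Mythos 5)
Your proposal is correct and follows essentially the same route as the paper: apply the distribution method (Theorems \ref{MinDis}, \ref{MajDis}, \ref{BADis}) to the Gaussian family with the measure $\dnus = \lambda^{-\sigma/2-1}\,\dl$, identify the distributional Fourier transform of $g_\sigma$ outside a compact interval via Lemma~\ref{distlem2} together with \eqref{sigmameasure}, verify admissibility of $\nu_\sigma$ from the asymptotics \eqref{Sec10.4}--\eqref{Sec10.6}, and read off the integral values from Lemmas~\ref{applem5} and~\ref{applem6}. The checks you flag (admissibility thresholds at $\sigma>-1$ versus $\sigma>0$, the excluded points matching the poles of $\gamma(-\sigma)$, and the continuity of $g_\sigma$ at the origin in the majorant case) are precisely the points the paper's own exposition addresses.
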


Corollary \ref{thm19} is the complete treatment for the functions $x \mapsto |x|^\sigma$ since for $\sigma \leq -1$ these functions are not integrable at the origin, and therefore no extremals exist, and for $\sigma = 2k, k \in \Z^{+}$, these functions are entire, and therefore the extremal problem is trivial. Previous results had been obtained in \cite{CV2} and \cite{CV3} for the functions $|x|^{\sigma}$, $-1<\sigma <1$ and in \cite{Lit} for the functions $|x|^{2k+1}$, $k \in \Z^{+}$.

Next we consider Hilbert-type inequalities.  It is well known that there is a simple relationship between the solution of the Beurling-Selberg extremal problem for a function $g:\R \to \R$ and the existence of optimal bounds for Hermitian forms involving the Fourier transform $\widehat{g}$.  Such bounds for
Hermitian forms are called Hilbert-type inequalities. Detailed proofs of these inequalities can be found for instance in \cite[Theorem 16]{V} or \cite[Theorem 7.1]{CV2}. In particular we report here the Hilbert-type inequalities that follow from Corollary \ref{thm19}. They involve the same kernel as the classical discrete Hardy-Littlewood-Sobolev inequality (see \cite[p. 288]{HPL}), and generalize the result contained in \cite[Corollary 7.2]{CV2}.

\begin{corollary}
 Let $\xi_1, \xi_2,...,\xi_N$ be real numbers such that $0 < \delta \leq |\xi_m-\xi_n|$ whenever $m\neq n$. Let $a_1, a_2,..., a_N$ be complex numbers. If $0 < \sigma < 1$ then 
\begin{equation*}
 -\frac{\bigl(2 - 2^{2-\sigma}\bigr)\zeta(\sigma)}{\delta^{\sigma}} \sum_{n=1}^N |a_n|^2 \leq \sum_{m=1}^N \sum_{\stackrel{n=1}{n\neq m}}^{N} \frac{a_m \overline{a}_n}{|\xi_m - \xi_n|^\sigma}\,,
\end{equation*}
if $\sigma = 1$ then,
\begin{equation*}
 -\frac{\log 4}{\delta} \sum_{n=1}^N |a_n|^2 \leq \sum_{m=1}^N \sum_{\stackrel{n=1}{n\neq m}}^{N} \frac{a_m \overline{a}_n}{|\xi_m - \xi_n|}\,,
\end{equation*}
and if $1 < \sigma$ then
\begin{equation*}
 -\frac{\bigl(2 - 2^{2-\sigma}\bigr)\zeta(\sigma)}{\delta^{\sigma}} \sum_{n=1}^N |a_n|^2 \leq \sum_{m=1}^N \sum_{\stackrel{n=1}{n\neq m}}^{N} \frac{a_m \overline{a}_n}{|\xi_m - \xi_n|^\sigma} \leq \frac{ 2\zeta(\sigma)}{\delta^{\sigma}} \sum_{n=1}^N |a_n|^2.
\end{equation*}
The constants appearing in these inequalities are the best possible.
\end{corollary}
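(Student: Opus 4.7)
The plan is to reduce each of the three inequalities to the extremal data of Corollary~\ref{thm19} via the positive-definiteness argument detailed in \cite[Theorem~16]{V} and \cite[Theorem~7.1]{CV2}. First apply Corollary~\ref{thm19} with parameter $\sigma' := \sigma - 1$; the admissibility ranges $\sigma' > -1$ (resp.\ $\sigma' > 0$) for the minorant (resp.\ majorant) correspond precisely to $\sigma > 0$ (resp.\ $\sigma > 1$) in the statement. A standard dilation then produces the extremal minorant $l^\delta(x) := \delta^{-\sigma'} l_{\sigma'}(\delta x)$ and extremal majorant $m^\delta(x) := \delta^{-\sigma'} m_{\sigma'}(\delta x)$ of $g_{\sigma'}(x) = \gamma(1-\sigma)|x|^{\sigma-1}$ of exponential type $2\pi\delta$.

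Next I would identify the Fourier transforms of the non-negative $L^1$ differences $D_\delta := g_{\sigma'} - l^\delta$ and $E_\delta := m^\delta - g_{\sigma'}$. Since $\widehat{l^\delta}$ and $\widehat{m^\delta}$ are supported in $[-\delta,\delta]$ and Lemma~\ref{distlem2} identifies $\widehat{g}_{\sigma'}$ with the function $\gamma(\sigma)|t|^{-\sigma}$ on $[-\delta,\delta]^c$, we obtain
\begin{equation*}
\widehat{D}_\delta(t) = \gamma(\sigma)|t|^{-\sigma},\qquad \widehat{E}_\delta(t) = -\gamma(\sigma)|t|^{-\sigma}\qquad\text{for }|t|\ge\delta,
\end{equation*}
while rescaling (\ref{Sec11.1}) and (\ref{Sec11.2}) supplies the values at $t=0$:
\begin{equation*}
\widehat{D}_\delta(0) = \delta^{-\sigma}(2-2^{2-\sigma})\gamma(\sigma)\zeta(\sigma),\qquad \widehat{E}_\delta(0) = 2\delta^{-\sigma}\gamma(\sigma)\zeta(\sigma).
\end{equation*}
Bochner's theorem applied to $D_\delta, E_\delta \ge 0$ gives that $\widehat{D}_\delta$ and $\widehat{E}_\delta$ are positive definite, whence $\sum_{m,n}\widehat{D}_\delta(\xi_m-\xi_n)a_m\overline{a}_n\ge 0$ and likewise for $\widehat{E}_\delta$. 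Separating the diagonal from the off-diagonal and using $|\xi_m-\xi_n|\ge\delta$ to invoke the explicit off-diagonal formulas, then dividing by $\gamma(\sigma)>0$, yields the stated lower bound (from $D_\delta$) and upper bound (from $E_\delta$). The borderline $\sigma=1$ follows by letting $\sigma\to 1^+$ in the lower bound for $\sigma>1$, using $(2-2^{2-\sigma})\zeta(\sigma)\to \log 4$.

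The main obstacle is verifying sharpness. I expect this to come out by testing against the arithmetic progression $\xi_n=\delta n$, $n=1,\dots,N$: with $a_n=1$ the bilinear form, grouped by $k=m-n$, evaluates to $2\delta^{-\sigma}\sum_{k=1}^{N-1}(N-k)k^{-\sigma} \sim 2N\delta^{-\sigma}\zeta(\sigma)$ and so saturates the upper bound for $\sigma>1$; with $a_n=(-1)^n$ the same grouping produces $-(2-2^{2-\sigma})N\delta^{-\sigma}\zeta(\sigma) + o(N)$ and saturates the lower bound. The secondary terms are controlled by routine partial summation, and the borderline $\sigma=1$ again receives the same continuity treatment.
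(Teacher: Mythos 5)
Your proof is correct and takes essentially the same approach as the paper, which does not carry out the argument itself but defers to \cite[Theorem~16]{V} and \cite[Theorem~7.1]{CV2}; those references use exactly the mechanism you describe, namely rescaling the one-sided extremal difference $D_\delta = g_{\sigma'} - l^\delta$ (resp.\ $E_\delta = m^\delta - g_{\sigma'}$) to exponential type $2\pi\delta$, noting that $D_\delta, E_\delta \ge 0$ and lie in $L^1(\R)$, so that the quadratic form $\sum_{m,n}\widehat{D}_\delta(\xi_m-\xi_n)a_m\overline{a}_n = \int D_\delta(x)\bigl|\sum_m a_m e(-\xi_m x)\bigr|^2\,\dx \ge 0$, and then separating the diagonal term $\widehat{D}_\delta(0)$ from the off-diagonal values $\gamma(\sigma)|\xi_m-\xi_n|^{-\sigma}$ supplied by Lemma~\ref{distlem2}, with the constants coming from \eqref{Sec11.1} and \eqref{Sec11.2}. (The positivity of the form is the elementary direction, not Bochner's theorem proper, but the calculation you have in mind is the correct one.) One small omission: Corollary~\ref{thm19} excludes $\sigma' \in \{0,2,4,\ldots\}$, i.e.\ $\sigma \in \{1,3,5,\ldots\}$, and while you patch $\sigma=1$ by continuity in $\sigma$, the same continuity argument is also needed for the odd integers $\sigma = 3, 5, \ldots$ lying in the range $1<\sigma$ covered by the statement; this is an immediate fix since both sides of the inequalities are continuous in $\sigma$. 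Your sharpness verification via arithmetic progressions $\xi_n = \delta n$ with $a_n \equiv 1$ and $a_n = (-1)^n$, grouping by $k = m - n$ and using $\sum_{k\ge1}(-1)^{k-1}k^{-\sigma} = (1-2^{1-\sigma})\zeta(\sigma)$, is correct and is the standard way such constants are shown best possible.
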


\section{Further Examples}

We finish our list of applications with two additional examples contemplated by the distribution method.

\begin{corollary}
Let $\alpha\ge 0$ and consider
\begin{equation*}
x\mapsto\tau_{\alpha}(x) =  -\log(x^2+\alpha^2).
\end{equation*}
\begin{enumerate} 
 \item[(i)] There exists a unique extremal minorant $l_\alpha$ of exponential type $2\pi$ for $\tau_{\alpha}$. The function $l_{\sigma}$ interpolates the values of $\tau_{\alpha}$ at $\Z + \h$ and satisfies
\begin{equation*}
\int_{-\infty}^{\infty} \{\tau_{\alpha}(x) - l_{\alpha}(x)\} \, \dx  =  2\log\bigl(1 + e^{2\pi\alpha}\bigr).
\end{equation*}
\item[(ii)] If $0<\alpha$, there exists a unique extremal majorant $m_{\alpha}$ of exponential type $2\pi$ for $\tau_{\alpha}$. The function $m_{\alpha}$ interpolates the values of $\tau_{\alpha}$ at $\Z$ and satisfies
\begin{equation*}
\int_{-\infty}^{\infty} \{m_{\alpha}(x) - \tau_{\alpha}(x)\} \, \dx  =  2\log\bigl(1 - e^{2\pi\alpha}\bigr).
\end{equation*}
\item[(iii)] There exists a unique best approximation $k_{\alpha}$ of exponential type $\pi$ for $\tau_{\alpha}$. The function $k_{\alpha}$ interpolates the values of $\tau_{\alpha}$ at $\Z + \h$, satisfying
\begin{equation*}
\sgn(\cos \pi x)\{ \tau_{\alpha}(x) - k_{\alpha}(x)\} \geq 0
\end{equation*}
and
\begin{equation*}
\int_{-\infty}^{\infty} |\tau_{\alpha}(x) - k_{\alpha}(x)|\, \dx  =  \int_0^{\infty} \left\{\frac{1}{\pi} \sum_{n=-\infty}^{\infty} \frac{(-1)^n}{n+\hh} \,\tG_{\lambda}(n + \hh)\right\} \frac{e^{-\pi\lambda \alpha^2}}{\lambda}\, \dl.
\end{equation*}

\end{enumerate}
\end{corollary}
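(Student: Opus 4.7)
The plan is to apply the distribution method (Theorems \ref{MinDis}, \ref{MajDis}, and \ref{BADis}) to the Gaussian family $\{G_\lambda\}_{\lambda>0}$, which by Theorems \ref{thm1}--\ref{thm3} and Corollary \ref{cor9} already satisfies hypotheses (i)--(vi), using the positive Borel measure on $(0,\infty)$ given by
\begin{equation*}
\dnus = \frac{e^{-\pi\lambda\alpha^2}}{\lambda}\,\dl.
\end{equation*}
The first task will be to verify the Fourier-transform hypothesis of each theorem. By the classical identity $\int_0^\infty u^{s-1} e^{-au-b/u}\,\du = 2(b/a)^{s/2} K_s(2\sqrt{ab})$ with $s = -\tfrac12$, together with $K_{1/2}(z) = \sqrt{\pi/(2z)}\,e^{-z}$, I would establish
\begin{equation*}
\int_0^{\infty} \widehat{G}_{\lambda}(t)\,\dnus = \int_0^{\infty} \lambda^{-\g}\, e^{-\pi t^2/\lambda - \pi\alpha^2\lambda}\,\dl = \frac{e^{-2\pi\alpha|t|}}{|t|}
\end{equation*}
for all $t\neq 0$. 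I then need to match this against $\widehat{\tau_\alpha}$ outside a neighborhood of the origin: differentiating $-\log(x^2+\alpha^2)$ in $\alpha$ and using the well-known Fourier transform of the Poisson kernel $2\alpha/(x^2+\alpha^2)$ gives, by integration in $\alpha$, the distributional identity $\widehat{\tau_\alpha}(t) = e^{-2\pi\alpha|t|}/|t|$ for $t\neq 0$ (up to a distribution supported at the origin, which does not affect the hypothesis since the admissible test functions $\varphi$ vanish on a neighborhood of $0$).

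Next I would verify the finiteness hypothesis. From the asymptotic estimates (\ref{Sec10.4})--(\ref{Sec10.6}), the inner integrals behave like $O(e^{-\pi/\lambda})$ as $\lambda\to 0$ and like $O(\lambda^{-\h})$ (minorant/best approximation) or $O(1)$ (majorant) as $\lambda\to\infty$. Against $\dnus$, these are integrable for every $\alpha\ge 0$ in cases (i) and (iii) thanks to the factor $\lambda^{-1}$ near zero being absorbed by $e^{-\pi/\lambda}$, while in case (ii) the $O(1)$ tail at infinity forces $\alpha>0$, exactly as stated in the corollary. With these two hypotheses confirmed, Theorems \ref{MinDis}, \ref{MajDis}, \ref{BADis} yield existence, uniqueness, the interpolation of $\tau_\alpha$ by $l_\alpha$, $m_\alpha$, $k_\alpha$ at the respective node sets, and the sign condition for $k_\alpha$.

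Finally, I would evaluate the minimal integrals. Starting from (\ref{Sec10.1})--(\ref{Sec10.2}) and using Fubini (justified by the absolute integrability just verified) I would write
\begin{equation*}
\int_{-\infty}^{\infty} \{\tau_\alpha(x)-l_\alpha(x)\}\,\dx = \sum_{n\neq 0} (-1)^n \int_0^{\infty} \widehat{G}_\lambda(n)\,\dnus = \sum_{n\neq 0}(-1)^n\,\frac{e^{-2\pi\alpha|n|}}{|n|},
\end{equation*}
and close the computation in the minorant case with the series $-\log(1+y) = \sum_{n\geq 1}(-1)^n y^n/n$ (and the analogous $-\log(1-y)$ identity in the majorant case, the series running over $n\neq 0$ without the alternating sign). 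The best-approximation integral in (iii) is simply left in the iterated form inherited from (\ref{Sec10.3}).

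The main obstacle will be Step 1: establishing the distributional Fourier identity for $\tau_\alpha$ against test functions supported in $[-1,1]^c$ (or $[-\hh,\hh]^c$). Since $\tau_\alpha$ is unbounded and non-integrable, one cannot manipulate its Fourier transform pointwise; the cleanest route is to differentiate in $\alpha$, transform the integrable Poisson-kernel derivative, and integrate back, being careful that the $\alpha$-independent "constant of integration" is a distribution supported at the origin and hence annihilated by the test functions permitted in the distribution-method hypothesis.
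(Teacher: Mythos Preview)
Your proposal is correct and follows the same overall strategy as the paper: apply Theorems \ref{MinDis}, \ref{MajDis}, \ref{BADis} to the Gaussian family with the measure $\dnu = \lambda^{-1} e^{-\pi\lambda\alpha^2}\,\dl$, check the finiteness condition via (\ref{Sec10.4})--(\ref{Sec10.6}), and read off the minimal integrals from (\ref{Sec10.1})--(\ref{Sec10.3}).

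The one point where your route diverges is the verification of the Fourier-transform hypothesis. You compute $\int_0^\infty \widehat G_\lambda(t)\,\dnu = |t|^{-1}e^{-2\pi\alpha|t|}$ via a Bessel-function identity, and then separately argue that this agrees with $\widehat{\tau_\alpha}$ off the origin by differentiating in $\alpha$, invoking the Poisson-kernel transform, and integrating back (with the constant of integration supported at $0$). This works, but the paper bypasses all of it with the Frullani-type identity
\[
-\log(x^2+\alpha^2) = \int_0^\infty \frac{e^{-\pi\lambda(x^2+\alpha^2)} - e^{-\pi\lambda}}{\lambda}\,\dl,
\]
which expresses $\tau_\alpha$ directly as an integral of Gaussians minus a constant. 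A single application of Fubini against a test function $\p$ supported away from $0$ then yields the hypothesis immediately, since the $e^{-\pi\lambda}$ term contributes a multiple of $\widehat\p(0)=0$. The paper's device is shorter and avoids both the Bessel computation and the delicate ``integrate back in $\alpha$'' step; your approach has the minor advantage of producing the explicit closed form $|t|^{-1}e^{-2\pi\alpha|t|}$ for the transform, which the paper never writes down.
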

 
\begin{proof} For $0 \leq \alpha$ we have the following identity
\begin{equation}\label{idlog}
-\log(x^2 + \alpha^2) = \int_{0}^{\infty} \frac{\bigl\{e^{-\pi\lambda (x^2 + \alpha^2)} - e^{-\pi \lambda}\bigr\}}{\lambda} \, \dl\,.
\end{equation}
Let $\varphi$ be a Schwartz function supported in $[-\delta,\delta]^c$. An application of Fubini's theorem gives us
\begin{align}\label{Sec12.1}
\begin{split}
\int_{-\infty}^{\infty} -\log(x^2 + &\alpha^2)\,\widehat{\varphi}(x)\,\dx \\
& = \int_{-\infty}^{\infty} \left\{ \int_{0}^{\infty} \frac{\bigl\{e^{-\pi\lambda (x^2 + \alpha^2)} - e^{-\pi \lambda}\bigr\}}{\lambda}  \dl\right\} \,\widehat{\varphi}(x)\, \dx\\
& = \int_{0}^{\infty} \int_{-\infty}^{\infty} \frac{\bigl\{e^{-\pi\lambda (x^2 + \alpha^2)} - e^{-\pi \lambda}\bigr\}}{\lambda}\,\widehat{\varphi}(x)\,\dx\,\dl\\
& = \int_{0}^{\infty} \left\{ \int_{-\infty}^{\infty} \widehat{G}_{\lambda}(t)\,\varphi(t) \,\dt\right\} \frac{e^{-\pi\lambda \alpha^2}}{\lambda}\, \dl\\
& = \int_{-\infty}^{\infty} \left\{\int_{0}^{\infty} \widehat{G}_{\lambda}(t)\,\frac{e^{-\pi\lambda \alpha^2}}{\lambda}\, \dl\right\}\, \varphi(t) \,\dt.
\end{split}
\end{align}
Equation (\ref{Sec12.1}) provides the Fourier transform of $-\log(x^2 + \alpha^2)$ outside a compact interval $[-\delta, \delta]$. We can therefore apply the distribution method (Theorems \ref{MinDis}, \ref{MajDis} and \ref{BADis}) with the Gaussian family and measure $\nu$ on $I = (0,\infty)$ given by
\begin{equation*}
 \dnu = \frac{e^{-\pi\lambda \alpha^2}}{\lambda}\, \dl.
\end{equation*}
According to the asymptotics (\ref{Sec10.4}), (\ref{Sec10.5}) and (\ref{Sec10.6}), if $\alpha >0$ we can treat the three approximation problems, and if $\alpha =0$ we can only treat the minorant and the best approximation problem (which is in agreement with the fact that $-\log |x|$ is unbounded by above). The special case of $-\log|x|$ (when $\alpha =0$) was achieved in the papers \cite{CV2} and \cite{CV3}.
\end{proof}

\begin{corollary} Let $n\in\N$ and define $h_n$ by
\begin{equation*}
 h_{n}(x) = (-1)^{n+1} x^{2n}\log(x^2).
\end{equation*}
\begin{enumerate} 
 \item[(i)] There exists a unique extremal minorant $l_n$ of exponential type $2\pi$ for $h_n$. The function $l_n$ interpolates the values of $h_n$ at $\Z + \h$ and satisfies
\begin{equation*}
\int_{-\infty}^{\infty} \{h_n(x) - l_n(x)\} \, \dx  =  \bigl(2 - 2^{1-2n}\bigr) (2n)!\, (2\pi)^{-2n} \zeta(2n+1).
\end{equation*}
\item[(ii)] If $n>0$, there exists a unique extremal majorant $m_n$ of exponential type $2\pi$ for $h_n$. The function $m_n$ interpolates the values of $h_n$ at $\Z$ and satisfies
\begin{equation*}
\int_{-\infty}^{\infty} \{m_n(x) - h_n(x)\} \, \dx  =2(2n)!\,(2\pi)^{-2n} \zeta(2n+1).
\end{equation*}
\item[(iii)] There exists a unique best approximation $k_n$ of exponential type $\pi$ for $h_n$. The function $k_n$ interpolates the values of $h_n$ at $\Z + \h$, satisfying
\begin{equation*}
 \sgn(\cos \pi x)\{ h_n(x) - k_n(x)\} \geq 0
\end{equation*}
and
\begin{equation*}
\int_{-\infty}^{\infty} \bigl|h_n(x) - k_n(x)\bigr| \, \dx  =  \frac{2}{\pi} (2n)!\,(2\pi)^{-2n} L(\chi,2+2n)
\end{equation*}
\end{enumerate}
\end{corollary}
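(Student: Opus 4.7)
My plan is to apply the three distribution-method theorems (Theorems \ref{MinDis}, \ref{MajDis}, \ref{BADis}) to the Gaussian family $G_{\lambda}(x) = e^{-\pi\lambda x^2}$, $\lambda \in (0,\infty)$, with the measure
\begin{equation*}
\dnnu = \frac{n!}{\pi^n}\lambda^{-n-1}\dl.
\end{equation*}
The choice is forced by the Fourier side: integrating $\widehat{G}_{\lambda}(t) = \lambda^{-\hh}e^{-\pi t^2/\lambda}$ against $\lambda^{-n-1}\dl$ gives $\gamma(2n+1)|t|^{-2n-1}$ via a standard Gamma-integral substitution, and combining with $\gamma(2n+1) = (2n)!/(4^n n!\,\pi^n)$ reduces this to $(2n)!\,(2\pi)^{-2n}|t|^{-2n-1}$, which I will identify with $\widehat{h}_n$ on any complement $[-\delta,\delta]^c$.

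The central step is precisely this Fourier identification: for $\varphi\in\sS(\R)$ supported in $[-\delta,\delta]^c$, I want to show
\begin{equation*}
\int_{-\infty}^{\infty} h_n(x)\,\tp(x)\,\dx = \frac{(2n)!}{(2\pi)^{2n}}\int_{-\infty}^{\infty}|t|^{-2n-1}\varphi(t)\,\dt.
\end{equation*}
The cleanest route is to differentiate the identity of Lemma \ref{distlem2} in the parameter $s$ at $s = 2n$. There $\gamma(-s)$ has a simple pole while $\int|x|^s\tp(x)\,\dx$ has a compensating simple zero (as remarked in that lemma); the constant term in the Laurent expansion of $\gamma(-s)|x|^s$ about $s = 2n$ is, up to an explicit polynomial in $x$ and a multiplicative constant, exactly $h_n(x)$. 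Since any polynomial pairs trivially with $\tp$ (its Fourier transform being supported at $0$), the polynomial term drops out, and the multiplicative constant is pinned down by matching residues and next-order terms on both sides.

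Once this identification is established, everything else is mechanical. Integrability of $\int\{G_\lambda-L_\lambda\}\,\dx$, $\int\{M_\lambda-G_\lambda\}\,\dx$, and $\int|G_\lambda-K_\lambda|\,\dx$ against $\lambda^{-n-1}\dl$ follows from the asymptotics \eqref{Sec10.4}--\eqref{Sec10.6}: as $\lambda\to 0$ the super-exponential decay absorbs any negative power of $\lambda$, and as $\lambda\to\infty$ the minorant and best-approximation errors are $O(\lambda^{-\h})$ (convergent for every $n\ge 0$) while the majorant error is only $O(1)$ (convergent only for $n > 0$, exactly matching the restriction in part (ii)). The required regularity of $h_n$ (continuous on $\R$ for $n>0$ and on $\R\setminus\{0\}$ for $n = 0$, differentiable on $\R\setminus\{0\}$ in all cases) is immediate. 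The three extremal integral values are then obtained by evaluating Lemmas \ref{applem5} and \ref{applem6} at $\sigma = 2n$, multiplying by $n!/\pi^n$, and simplifying via $n!\,\gamma(2n+1)/\pi^n = (2n)!/(2\pi)^{2n}$.

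The main obstacle is the Fourier identification above: $h_n$ has polynomial growth of degree $2n$, so $\widehat{h}_n$ as a tempered distribution carries a non-trivial singular part supported at the origin (a linear combination of derivatives of $\delta$), and one must argue carefully that this singular part is invisible to test functions vanishing near $0$ while the regular part is exactly the advertised $(2n)!\,(2\pi)^{-2n}|t|^{-2n-1}$. Pairing Lemma \ref{distlem2} with the Laurent-expansion analysis of $\gamma(-s)|x|^s$ at $s = 2n$ is what makes this step clean and keeps the constants under control.
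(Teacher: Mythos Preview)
Your proposal is correct and the overall architecture coincides with the paper's: the same measure $\dnu = \tfrac{n!}{\pi^n}\lambda^{-n-1}\dl$ (the paper writes this as $\tfrac{(2n)!}{(2\pi)^{2n}}\gamma(2n+1)^{-1}\lambda^{-n-1}\dl$, which is the same thing), the same appeal to Theorems \ref{MinDis}--\ref{BADis}, the same use of the asymptotics \eqref{Sec10.4}--\eqref{Sec10.6} for admissibility, and the same evaluation of the minimal integrals via Lemmas \ref{applem5} and \ref{applem6} at $\sigma = 2n$.

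Where you diverge is in the Fourier identification $\widehat{h}_n(t) = \tfrac{(2n)!}{(2\pi)^{2n}}|t|^{-2n-1}$ on $[-\delta,\delta]^c$. The paper does this by a direct computation: it starts from the integral representation $-\log(x^2) = \int_0^\infty \lambda^{-1}\{e^{-\pi\lambda x^2} - e^{-\pi\lambda}\}\,\dl$, multiplies through by $(-1)^n x^{2n}$, uses that $(-1)^n x^{2n}G_\lambda(x)$ has Fourier transform $(2\pi)^{-2n}\widehat{G}_\lambda^{(2n)}(t)$ while the polynomial term $(-1)^n x^{2n}e^{-\pi\lambda}$ disappears against $\varphi$ supported away from $0$, and then differentiates $\int_0^\infty\lambda^{-1}\widehat{G}_\lambda(t)\,\dl = |t|^{-1}$ pointwise $2n$ times. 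Your route instead evaluates the analytic identity of Lemma \ref{distlem2} at $s = 2n$: since $\int x^{2n}\tp(x)\,\dx = 0$ (its distributional Fourier transform is supported at $0$), the simple pole of $\gamma(-s)$ is cancelled by the simple zero of $\int|x|^s\tp(x)\,\dx$, and the value at $s = 2n$ is the residue of $\gamma(-s)$ times $\int x^{2n}\log|x|\,\tp(x)\,\dx$, i.e., $\tfrac{\pi^n}{n!}\int h_n(x)\tp(x)\,\dx$. One small wording issue: you say ``differentiate the identity in $s$'', but you are really \emph{evaluating} it at $s=2n$ via a Laurent expansion --- no $s$-derivative of the identity is needed. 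The paper's computation is more hands-on and avoids any Laurent bookkeeping; yours is more structural and explains transparently why $x^{2n}\log x^2$ is the natural object at the even-integer poles of $\gamma(-s)$. Both lead to exactly the same constant and the same measure, so from that point on the arguments merge.
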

\begin{proof}
Let $\varphi$ be a Schwartz function supported in $[-\delta,\delta]^c$. We make use of identity (\ref{idlog}) (with $\alpha =0$) and repeated applications of Fubini's theorem to obtain 
\begin{align}\label{Sec12.2}
\begin{split}
\int_{-\infty}^{\infty} h_n(x) \,\widehat{\varphi}(x)\,\dx 
& = \int_{-\infty}^{\infty} (-1)^n \,x^{2n}\,\left\{ \int_{0}^{\infty} \frac{\bigl\{e^{-\pi\lambda x^2 } - e^{-\pi \lambda}\bigr\}}{\lambda}  \dl\right\} \,\widehat{\varphi}(x)\, \dx\\
& = \int_{0}^{\infty} \int_{-\infty}^{\infty} (-1)^n \,x^{2n}\,\frac{\bigl\{e^{-\pi\lambda x^2} - e^{-\pi \lambda}\bigr\}}{\lambda}\,\widehat{\varphi}(x)\,\dx\,\dl\\
& = \frac{1}{(2\pi)^{2n}}\int_{0}^{\infty} \left\{ \int_{-\infty}^{\infty} \widehat{G}_{\lambda}^{(2n)}(t)\,\varphi(t) \,\dt\right\} \frac{1}{\lambda}\, \dl\\
& = \frac{1}{(2\pi)^{2n}}\int_{-\infty}^{\infty} \left\{\int_{0}^{\infty} \frac{\widehat{G}_{\lambda}^{(2n)}(t)}{\lambda}\, \dl\right\}\, \varphi(t) \,\dt\\
& = \frac{1}{(2\pi)^{2n}}\int_{-\infty}^{\infty} \left(\frac{\text{\rm d}}{\dt}\right)^{2n}\left\{\int_{0}^{\infty} \frac{\widehat{G}_{\lambda}(t)}{\lambda}\, \dl\right\}\, \varphi(t) \,\dt\,,
\end{split}
\end{align}
and using (\ref{sigmameasure}), the last integral translates to 
\begin{align}
 \begin{split}\label{Sec12.3}
 &= \frac{\gamma(1)}{(2\pi)^{2n}}\int_{-\infty}^{\infty} \left(\frac{\text{\rm d}}{\dt}\right)^{2n}\,|t|^{-1}\, \varphi(t) \,\dt\\
& = \frac{ (2n)!}{(2\pi)^{2n}} \int_{-\infty}^{\infty} |t|^{-2n-1}\, \varphi(t) \,\dt.
 \end{split}
\end{align}
Hence, the Fourier transform of $h_n$ in the distribution sense, outside the compact interval $[-\delta, \delta]$, is given by the function
\begin{equation*}
 t \mapsto \frac{(2n)!}{(2\pi)^{2n}}|t|^{-2n-1}\, =  \frac{(2n)!}{(2\pi)^{2n}} \gamma(2n+1)^{-1} \int_0^{\infty} \widehat{G}_{\lambda}(t) \,\text{\rm d}\nu_{2n}(\lambda),
\end{equation*}
where the last identity follows from (\ref{sigmameasure}) with $\sigma=2n$. An application of Theorems \ref{MinDis}, \ref{MajDis} and \ref{BADis} with measure
\[
\dnu = \frac{(2n)!}{(2\pi)^{2n}} \gamma(2n+1)^{-1} \lambda^{-n-1} \dl
\]
together with the formulas in Lemmas \ref{applem5} and \ref{applem6} give the desired result.

\end{proof}

\section*{Acknowledgments}
This material is based upon work supported by the National Science Foundation under agreements No. DMS-0635607 (E. Carneiro) and DMS-0603282 (J. D. Vaaler). Any opinions, findings and conclusions or recommendations expressed in this material are those of the authors and do not necessarily reflect the views of the National Science Foundation. E. Carneiro would also like to acknowledge support from the Capes/Fulbright grant BEX 1710-04-4 and the Homer Lindsey Bruce Fellowship from the University of Texas. The authors are thankful to K. Soundararajan and E. Bombieri for enlightening comments during the preparation of this work.

\end{document}